\documentclass{article}

\usepackage{amsfonts,amsmath,amssymb,amsthm}
\usepackage{graphicx,xcolor}
\usepackage[margin=1in]{geometry}
\usepackage{tikz}
\usepackage{blkarray}
\usepackage{enumitem}

\setcounter{MaxMatrixCols}{11}

\newtheorem*{theorem*}{Theorem}
\newtheorem{theorem}{Theorem}
\newtheorem{lemma}[theorem]{Lemma}
\newtheorem{proposition}[theorem]{Proposition}
\newtheorem{corollary}[theorem]{Corollary}
\newtheorem{conjecture}[theorem]{Conjecture}
\theoremstyle{definition}
\newtheorem{definition}[theorem]{Definition}
\theoremstyle{remark}
\newtheorem{remark}[theorem]{Remark}

\newtheorem{example}[theorem]{Example}

\numberwithin{theorem}{section}

\newcommand\cA{{\mathcal A}}

\newcommand\cC{{\mathcal C}}

\newcommand\cH{{\mathcal H}}

\newcommand\cM{{\mathcal M}}

\newcommand\cR{{\mathcal R}}

\newcommand\NN{{\mathbb N}}

\newcommand\RR{{\mathbb R}}

% UNCOMMENT FOR CITATION AND SECTION LABELS
% \usepackage[notcite,notref]{showkeys}
% \renewcommand*{\showkeyslabelformat}[1]{%
%   \fbox{\parbox{2cm}{\normalfont\small\sffamily#1}}}

\DeclareMathOperator{\cl}{cl}

\DeclareMathOperator{\rank}{rank}
\DeclareMathOperator{\supp}{supp}

% ALTERED
% \newcommand{\added}[1]{{\color{blue}#1}}
% \newcommand{\arxiv}[1]{{\color{blue}#1}} %Stuff that was arxiv only
% CLEAN
\newcommand{\added}[1]{{#1}}
\newcommand{\arxiv}[1]{{#1}} %Stuff that was arxiv only

\title{$k$-fold circuits and coning in rigidity matroids}
\author{John Hewetson\thanks{School of Mathematical Sciences, Lancaster University} \and Bill Jackson\thanks{School of Mathematical Sciences, Queen Mary University of London, E-mail: \texttt{b.jackson@qmul.ac.uk}} \and Anthony Nixon\thanks{School of Mathematical Sciences, Lancaster University, E-mail: \texttt{a.nixon@lancaster.ac.uk}} \and Ben Smith\thanks{School of Mathematical Sciences, Lancaster University, E-mail: \texttt{b.smith9@lancaster.ac.uk}}}
\date{\today}

\begin{document}

\maketitle

\begin{abstract}
In 1980 Lov\'{a}sz introduced the concept of a double circuit in a matroid. The 2nd, 3rd and 4th authors recently generalised this notion to $k$-fold circuits (for any natural number $k$) and proved foundational results about these $k$-fold circuits. 
In this article we use $k$-fold circuits to derive new results on the generic $d$-dimensional rigidity matroid $\cR_d$. These results include analysing 2-sums, showing sufficient conditions for the $k$-fold circuit property to hold for $k$-fold $\cR_d$-circuits, and giving an extension of Whiteley's coning lemma. The last of these allows us to reduce the problem of determining if a graph $G$ with a vertex $v$ of sufficiently high degree  
is independent in $\cR_d$ to that of verifying matroidal properties of  $G-v$ in $\cR_{d-1}$.
\end{abstract}

\section{Introduction}

A bar-joint \emph{framework} $(G,p)$ is the combination of a finite, simple graph $G=(V,E)$ and a realisation $p:V\rightarrow \mathbb{R}^d$ which realises the vertices of $G$ in Euclidean space. The framework is \emph{rigid in $\mathbb{R}^d$} if the only edge-length preserving continuous motions of the vertices arise from isometries of $\mathbb{R}^d$.
The framework is \emph{minimally rigid in $\mathbb{R}^d$} if it is rigid in $\RR^d$, but $(G - e, p)$ is not rigid in $\RR^d$ for any edge $e \in E$.
A framework $(G,p)$ is \emph{generic} if the set of coordinates of $p$ form an algebraically independent set over $\mathbb{Q}$.
%We will see in Section~\ref{sec:rigidity+prelims} that the edge sets of graphs $G=(V,E)$ which have minimally rigid generic realisations in $\mathbb{R}^d$ are the bases of a matroid $\cR_d$ on the edge set of the complete graph \added{$K_V$} on $V$.
% We will see in Section~\ref{sec:rigidity+prelims} that the edge sets of graphs $G$ which have minimally rigid generic realisations in $\mathbb{R}^d$ are the bases of a matroid \added{$\cR_d(K_V)$} on the edge set of the complete graph \added{$K_V$ on vertex set $V$ where $V(G) \subseteq V$.}
% \added{Setting $V$ to be countably infinite allows us to consider all minimally rigid graphs simultaneously: we denote the matroid $\cR_d := \cR_d(K_V)$ the \emph{full $d$-dimensional rigidity matroid}.}

\added{Given a graph $G = (V,E)$, the edge sets of subgraphs which have minimally rigid generic realisations in $\mathbb{R}^d$ are the bases of a matroid $\cR_d(G)$ called the \emph{$d$-dimensional rigidity matroid} of $G$.
We often focus on $\cR_d := \cR_d(K_n)$ whose underlying ground set is the edge set of a sufficiently large complete graph.}
It is of central theoretical and applied importance to provide a combinatorial understanding of $\cR_d$.
It is folklore that $\cR_1$ is precisely the cycle matroid of the underlying graph.
There is also a combinatorial description of independence in $\cR_2$ due to Pollaczek-Geiringer \cite{pol}. This result is often referred to as Laman's theorem due to an independent rediscovery \cite{laman}. When $d\geq 3$, no such characterisation is known. 

We will take a matroidal approach using ideas from our recent paper \cite{JNS}.
Let $\cM=(E,r)$ be a matroid with finite ground set $E$ and rank function $r$. A {\em circuit} of $\cM$ is a set $C\subseteq E$ such that $r(C)=|C|-1=r(C-e)$ for all $e\in E$. 
In 1980 Lov\'{a}sz \cite{Lov}, motivated by the matroid matching problem, introduced the concept of a double circuit to understand sets with multiple dependencies. A {\em double circuit} of $\cM$ is a set $D\subseteq E$ such that $r(D)=|D|-2=r(D-e)$ for all $e\in E$. 
For example, if $\cM$ is a cycle matroid (or graphic matroid) of a graph $G$, then the circuits of $\cM$ are the edge sets of cycles in $G$, and the double circuits are the edge sets of subgraphs of $G$ which are pairs of cycles with at most one vertex in common or \emph{theta graphs}, i.e. graphs consisting of 3 internally disjoint paths between two vertices. 
In general, a double circuit $D$ can contain arbitrarily many circuits.
A key insight of Lov\'{a}sz was the concept of the \emph{principal partition}, \added{a partition $\{A_1, \dots, A_\ell\}$ of $D$ where the circuits in $D$ are precisely $D \setminus A_i$ for $1 \leq i \leq \ell$.}
%of a double circuit which precisely describes all of its circuits. 

In \cite{JNS} we generalised double circuits to {\em $k$-fold circuits} i.e. sets $D\subseteq E$ such that $r(D)=|D|-k=r(D-e)$ for all $e\in D$, for some fixed integer $k\geq 0$.
\added{For example, if $\cM$ is a cycle matroid of a graph $G$, then an example of a $k$-fold circuit would be a subgraph consisting of $k+1$ internally disjoint paths between two vertices.
We also introduced the notion of the principal partition of a $k$-fold circuit as the partition $\{A_1, \dots, A_\ell\}$ where the $(k-1)$-fold circuits in $D$ are precisely $D \setminus A_i$ for $1 \leq i \leq \ell$.}
Our motivation to introduce and study $k$-fold circuits is to provide a better understanding of $\cR_d$.

\added{Dress and Lov\'asz introduced the \emph{double circuit property} in their study of the matroid matching problem \cite{DL}.}
This property was generalised to the $k$-fold circuit property in \cite{JNS}. 
A $k$-fold circuit $D$ with principal partition $\{A_1, \dots, A_\ell\}$ in a matroid $\cM$ is \emph{balanced} if 
\[
r\left(\bigcap_{i=1}^\ell \cl(D \setminus A_i)\right) = \ell - k \, ,
\]
and $\cM$ has the \emph{$k$-fold circuit property} if all of its $k$-fold circuits are balanced.
\added{This property was introduced as a measure of how close the lattice of flats of a matroid is to being a modular lattice.
For example, the $k$-fold circuit property is satisfied by pseudomodular matroids for all natural numbers $k \geq 2$.
In comparison, sparse paving matroids satisfy the $k$-fold circuit property for all $k \geq 3$ but do not satisfy the double circuit property, as the circuits within a double circuit can have unpredictable intersections.
To evaluate their combinatorial complexity, we study the $k$-fold circuit property in rigidity matroids.}
Makai showed that $\cR_1$ and $\cR_2$ have the double circuit property \cite{Mak}.
In addition, $\cR_1$ has the $k$-fold circuit property for all $k\geq 2$ by \cite{JNS}.
Conversely, we show that $\cR_d$ does not have the $k$-fold circuit property for any $d \geq 4$ and $k \geq 2$ (Theorem \ref{thm:K_67+dcp}), \added{demonstrating its lattice of flats is very far from modular.}
We also obtain two sufficient conditions for a particular $k$-fold $\cR_d$-circuit to be balanced \added{for any dimension $d$} (Theorem~\ref{thm:atmost2technicolour} and Theorem~\ref{thm:balanced+rigid+k+circuit}).
\added{For all $k \geq 2$, whether $\cR_3$ has the $k$-fold circuit property remains wide open.
As $\cR_3$ is conjectured to be equal to the generic cofactor matroid $C_2^1$, this presents a possible distinguishing invariant between the two matroids if the conjecture does not hold \cite{CJJT}.}

\added{Dress and Lov\'asz observed that for matroids satisfying the double circuit property, a max-min formula holds for the matroid matching problem.
This is not a necessary condition, and so matroids for which this max-min formula holds are said to have the \emph{matching matching property} (see Section \ref{sec:matroid+matching} for a formal definition).
Despite being NP-hard in general, the matroid matching problem can be solved in polynomial time for matroids satisfying the matroid matching property \cite{Lov}.
Even for matroids that do not satisfy it, one can often embed them inside a larger matroid with the matroid matching property to gain algorithmic advantages.
For example, any representable matroid can be embedded inside the `full linear matroid' whose ground set is the set of all vectors in a vector space.

While rigidity matroids $\cR_d(G)$ are always representable, it would be algorithmically preferable to embed them inside a smaller, combinatorial matroid.
For example, both graphic and transversal matroids can be embedded inside combinatorial `full' matroids, and both have more efficient algorithms for the matroid matching problem than the general representable case \cite{Lov,TLV}.
It is therefore natural to ask whether $\cR_d$ has the matroid matching property.
We show that $\cR_d$ does not have the matroid matching property for $d \geq 4$ (Theorem \ref{thm:matroid+matching}).}

\added{We also use $k$-fold circuits to analyse the coning operation from rigidity theory.}
Given a graph $G = (V,E)$,  the \emph{cone of $G$ over a new vertex $v$} is the graph $G*v$ obtained by adding $v$ and joining it to every vertex of $G$.
Whiteley showed that $G$ is (minimally) rigid in $\mathbb{R}^d$ if and only if $G*v$ is (minimally) rigid in $\mathbb{R}^{d+1}$ \cite{Wcone}.
This allows us to reduce the problem of characterising the rigidity of graphs with a `cone vertex' to a lower dimension.
To widen this family, we consider the `almost cone' $G' \subseteq G*v$ of $G$, where we join $v$ to all but one or two of the vertices of $G$.
We characterise when $G'$ is minimally rigid in $\mathbb{R}^{d+1}$ in terms of matroid properties of $G$ in $\cR_d$ (Theorem~\ref{thm:maint=2}).
This is proved using our results on
double $\cR_d$-circuits and how they behave under coning.
As corollaries, we show that if $G$ is independent in $\cR_d$ then we can add any two edges to $G$ and remain independent in $\cR_{d+1}$, and deduce a special case of the well known $X$-replacement conjecture\footnote{See Conjecture \ref{con:xrep} for a formal statement of this conjecture. Note that, while the $X$-replacement conjecture is known to be false for all $d\geq 4$, our special case holds for all $d\geq 3$.} for minimally rigid graphs in $\mathbb{R}^d$.

The paper is organised as follows. In Section \ref{sec:prelim} we review the necessary preliminaries from rigidity theory and matroid theory.
Section \ref{sec:kfold_in_rigidity_matroids} considers the $k$-fold circuit property for rigidity matroids. In Theorems \ref{thm:atmost2technicolour} and \ref{thm:balanced+rigid+k+circuit} we obtain sufficient conditions for a particular $k$-fold circuit in the generic $d$-dimensional rigidity matroid \added{to be balanced}.
We also show that in general, when $d\geq 4$, the rigidity matroid does not satisfy the double circuit property and show further that it does not have the matroid matching property. 

In Section \ref{sec:coning} we \added{present some applications to rigidity theory. Most notably we consider} the coning operation (which adds one new vertex to a graph and joins it to all existing vertices) and its effect on the generic $d$-dimensional rigidity matroid. 
We show in Theorem \ref{thm:coning-k-circuits-strong} that a graph is a $k$-fold $\cR_d$-circuit if and only if its cone is a $k$-fold $\cR_{d+1}$-circuit, and \added{give a partial characterisation of} how the principal partition changes under coning.
Among other applications to rigidity we reduce the problem of determining if a graph $G=(V,E)$ with a vertex of degree at least $|V|-3$  
is independent in $\cR_d$ to that of verifying matroidal properties of $G-v$ in $\cR_{d-1}$.
We conclude with some brief comments on possible future directions in Section \ref{sec:conclusion}.

\section{Preliminaries}
\label{sec:prelim}

We first introduce the relevant terminology and results from 
matroid theory and rigidity theory. 

\subsection{Matroids}

We refer the reader to \cite{Oxl} for basic definitions and concepts for matroids.

Let $\cM = (E,r)$ be a matroid on ground set $E$ with rank function $r$.
The \emph{closure operator} of $\cM$ is the function $\cl \colon 2^E \rightarrow 2^E$ defined by $\cl(X) = \{x \in E \colon r(X + x) = r(X)\}$.
A \emph{flat} of $\cM$ is a subset $F \subseteq E$ such that $\cl(F) = F$.
A pair of flats $X,Y$ are \emph{modular} if
\[
r(X) + r(Y) = r(X \cup Y) + r(X \cap Y) \, .
\]
A \emph{cyclic set} of $\cM$ is a subset $D \subseteq E$ which is the union of circuits of $\cM$. Equivalently $D$ is cyclic if $r(D-e)=r(D)$ for all $e\in D$. 

Given two matroids $\cM_1=(E_1,r_1)$ and $\cM_2=(E_2,r_2)$ with $E_1\cap E_2=\emptyset$ we define their {\em direct sum} to be the matroid $\cM_1\oplus \cM_2=(E_1\cup E_2,r)$ by putting $r(A)=r_1(A\cap E_1)+r_2(A\cap E_2)$ for all $A\subseteq E_1\cup E_2$.

We can define a relation on the ground set of a matroid $\cM=(E,r)$ by saying that $e,f \in E$ are related if $e=f$ or if there is a circuit $C$ in $\cM$ with $e,f \in C$. It is well-known that this is an equivalence relation and that, if 
its equivalence classes are $E_1, E_2, \dots, E_t$, then  $\cM=\cM_1\oplus \cM_2\oplus \dots \oplus \cM_t$, where $\cM_i=\cM|_{E_i}$ is the matroid restriction of $\cM$ onto $E_i$, see \cite{Oxl}. The classes $E_1, E_2, \dots, E_t$ are called the \emph{components} of $\cM$.
The matroid $\cM$ is \emph{connected} if it has only one component  and otherwise it is said to be \emph{disconnected}.
 We will say that a set $S\subseteq E$ is  {\em $\cM$-connected} if $\cM|_S$ is a connected matroid. 

A matroid $\cM$ is completely determined by its set of circuits $\cC(\cM)$. We can use this fact to 
define our final two matroid constructions.
Suppose $\cM_1,\cM_2$  are two matroids with ground sets $E_1,E_2$ respectively such that $E_1\cap E_2=\{e\}$ and $e$ is neither a loop nor coloop of $\cM_1$ or $\cM_2$.
The {\em parallel connection of $\cM_1,\cM_2$ along $e$} is the matroid
$P(\cM_1, \cM_2)$ with ground set $E_1\cup E_2$ and circuits
\begin{align} \label{eq:parallel+connection+circuits}
\cC(P(\cM_1, \cM_2))= \cC(\cM_1)\cup \cC(\cM_2) \cup \{(C_1\cup C_2)-e:e\in C_i\in \cC(\cM_i) \mbox{ for both } i=1,2\}.
\end{align}
The {\em $2$-sum of $\cM_1,\cM_2$ along $e$} is the matroid
$\cM_1\oplus_2 \cM_2$ with ground set $(E_1\cup E_2)-e$ and circuits
\begin{align} \label{eq:2sum+circuits}
\cC(\cM_1\oplus_2 \cM_2)=\{C\in \cC(\cM_1)\cup \cC(\cM_2) :e\not\in C\}\cup \{(C_1\cup C_2)-e:e\in C_i\in \cC(\cM_i) \mbox{ for both } i=1,2\}.
\end{align}
It follows directly from these definitions that the 2-sum of $\cM_1,\cM_2$ along $e$ is obtained by deleting $e$ from the parallel connection of $\cM_1,\cM_2$, i.e. $\cM_1\oplus_2 \cM_2 = P(\cM_1, \cM_2) \setminus e$.

We will need the following description of the flats of 2-sums and parallel connections.
A set $F \subseteq E$ is a flat of $P(\cM_1, \cM_2)$ if and only if $F \cap E_i$ is a flat of $\cM_i$ for both $i=1,2$, see~\cite[Proposition 7.6.6]{Brylawski}.
Using $\cM_1\oplus_2 \cM_2 = P(\cM_1, \cM_2) \setminus e$, it is straightforward to check that $F \subseteq E - e$ is a flat of $\cM_1\oplus_2 \cM_2$ if and only if $F \cap (E_i - e)$ is a flat of $\cM_i$ for $i=1,2$, or $(F \cup \{e\}) \cap E_i$ is a flat of $\cM_i$ for $i=1,2$.

Parallel connections and 2-sums have some common properties described in the following lemma.

\begin{lemma}\label{lem:2summat}
Let $\cM_1, \cM_2$ be matroids with ground sets $E_1, E_2$ such that $|E_i| \geq 2$ and $E_1 \cap E_2 = \{e\}$.
Suppose $\cM \in \{P(\cM_1, \cM_2), \cM_1\oplus_2 \cM_2\}$. Then:
\begin{enumerate}
    \item[(a)] $\cM$ is connected if and only if $\cM_1,\cM_2$ are both connected;
    \item[(b)] $\rank \cM=\rank \cM_1 + \rank \cM_2-1$.
\end{enumerate}
\end{lemma}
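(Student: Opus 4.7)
The plan is to handle part (b) first via an explicit basis construction for the parallel connection, then reduce the $2$-sum case using the identity $\cM_1\oplus_2\cM_2 = P(\cM_1,\cM_2)\setminus e$ from the excerpt; for part (a) I will work directly from the circuit descriptions in (\ref{eq:parallel+connection+circuits}) and (\ref{eq:2sum+circuits}), proving both equivalences in parallel.

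For (b) on $P := P(\cM_1,\cM_2)$, I will use that $e$ is neither a loop nor a coloop of $\cM_i$ to pick a basis $B_i$ of $\cM_i$ with $e \in B_i$, and then claim $B_1 \cup B_2$ is a basis of $P$. Independence is verified by inspecting the three types of circuit in (\ref{eq:parallel+connection+circuits}): types 1 and 2 immediately contradict independence of $B_1$ or $B_2$, while a type 3 circuit $(C_1\cup C_2)-e$ would force $C_i \subseteq B_i$ because $e\in B_i$. Spanning follows because for any $f\in E_i$ there is a circuit of $\cM_i$ in $B_i + f$, and this is also a circuit of $P$. Hence $r(P) = |B_1|+|B_2|-1 = r(\cM_1)+r(\cM_2)-1$. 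For the $2$-sum, $e$ lies in some $\cM_1$-circuit since it is not a coloop, so $e$ is not a coloop of $P$ either, giving $r(\cM_1\oplus_2\cM_2) = r(P\setminus e) = r(P)$.

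For (a), the forward direction is a case analysis on a pair of elements $f,g$ in the ground set of $\cM \in \{P,\cM_1\oplus_2\cM_2\}$. If $f,g \in E_i$, a circuit of $\cM_i$ containing both (which exists by connectedness) serves directly for $P$; for the $2$-sum one must additionally glue with a circuit of $\cM_j$ through $e$ whenever the chosen circuit of $\cM_i$ contains $e$. If instead $f\in E_1-e$ and $g\in E_2-e$, pick circuits $C_i$ of $\cM_i$ through $e$ with $f\in C_1$ and $g\in C_2$; then $(C_1\cup C_2)-e$ is a common circuit of $f,g$ in both $P$ and the $2$-sum. For the reverse direction, if $\cM_1$ is disconnected and $e$ lies in the component $A$, then any other component $B \subseteq E_1 - A$ remains a union of $\cM$-components: each $\cM$-circuit meeting $B$ is either a type 1 circuit of $\cM_1$ confined to $B$, a type 2 circuit disjoint from $B$, or a type 3 circuit $(C_1\cup C_2)-e$ whose $C_1$-part lies in $A$ and therefore misses $B$. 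Thus $\cM$ is disconnected, and the symmetric argument covers the case when $\cM_2$ is disconnected.

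The main obstacle is not conceptual but bookkeeping: both $P$ and $\cM_1\oplus_2\cM_2$ must be handled uniformly, and the crucial distinction — that circuits of $\cM_i$ containing $e$ are retained in $P$ but must be merged across $e$ in the $2$-sum — surfaces only in the $f,g\in E_i$ subcase of the forward connectedness argument. Ensuring that the basis construction in (b) and the circuit analysis in (a) both exploit the hypothesis that $e$ is neither a loop nor a coloop of either $\cM_i$ is the main point of care.
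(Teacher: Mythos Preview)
Your proof is correct. The paper's own proof is merely a citation to Oxley's textbook (Propositions 7.1.15, 7.1.17, 7.1.22), together with the one-line observation that $e$ is not a coloop of $P(\cM_1,\cM_2)$ so deleting it preserves rank. You instead give a self-contained argument: an explicit basis $B_1\cup B_2$ for the parallel connection (verified against the three circuit types in (\ref{eq:parallel+connection+circuits})), and a direct component analysis for connectedness. Both approaches are standard; yours has the advantage of being independent of the reference, while the paper's is shorter. The only places worth a second glance are that your spanning argument in (b) implicitly uses $f\notin B_i$ (harmless, since $f\in B_i$ is already spanned), and that in the reverse direction of (a) you need $B$ to be a \emph{proper} nonempty subset of the ground set of $\cM$ --- this holds because $|E_2|\geq 2$ forces $E_2-e\neq\emptyset$ in the complement --- which you use but do not state explicitly.
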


\begin{proof}
(a) is given in \cite[Proposition 7.1.17]{Oxl} and \cite[Proposition 7.1.22 (ii)]{Oxl}. \\
(b) follows from \cite[Proposition 7.1.15 (i)]{Oxl}  for parallel connections.
As $\cM_1 \oplus_2 \cM_2 = P(\cM_1, \cM_2) \setminus e$ and $e$ is not a coloop, the result follows for 2-sums. \\
\end{proof}

\subsection{Rigidity theory} \label{sec:rigidity+prelims}

We first give a formal definition of $\cR_d$ as the row matroid of a matrix.

When a framework $(G,p)$ is generic, we can perform a standard linearisation technique on the length constraints to study its rigidity.
More precisely, we define the \emph{rigidity matrix} $R(G,p)$ of a $d$-dimensional  framework $(G,p)$ to be the $|E|\times d|V|$ matrix whose rows are indexed by the edges and $d$-tuples of columns indexed by the vertices. The row for an edge $e=uv$ is given by:
$$\begin{pmatrix} 0 & \dots & 0 & p(u)-p(v) & 0 & \dots & 0 & p(v)-p(u) & 0 & \dots & 0\end{pmatrix}$$ 
where $p(u)-p(v)$ occurs in the $d$-tuple of columns indexed by $u$, $p(v)-p(u)$ occurs in the $d$-tuple of columns indexed by $v$ and $p(u),p(v)\in \mathbb{R}^{d}$.
Maxwell  \cite{Max} observed that
%It is easy to see that 
$\mbox{rank } R(G,p)\leq d|V|-\binom{d+1}{2}$ whenever $p$ affinely spans $\mathbb{R}^d$, and  a fundamental result of Asimov and Roth \cite{asi-rot} tells us that, when $p$ is generic and $G$ has at least $d+1$ vertices, $(G,p)$ is rigid if and only if rank $R(G,p)=d|V|-\binom{d+1}{2}$.

The {\em $d$-dimensional rigidity matroid} of a graph $G=(V,E)$ is the matroid $\mathcal{R}_d(G)$ on $E$ in which a set of edges $F\subseteq E$ is independent whenever the corresponding rows of $R(G,p)$ are independent, for some (or equivalently every) generic $p:V\to \mathbb{R}^d$. 
\added{Note that if $H \subseteq G$, the rigidity matroid $\cR_d(H)$ is the matroid restriction of $\cR_d(G)$ to the ground set $E(H)$.
We denote the rank function of $\cR_d(G)$ by $r_d$: as rank does not change under restriction, this does not depend on $G$.
We will generally consider the rigidity matroids of complete graphs $\cR_d(K_n)$ as this captures all rigidity properties of graphs on that vertex set. 
We denote the closure operator of $\cR_d(K_n)$ by $\cl_d$: unlike rank, closure is not preserved under matroid restriction, hence we will only ever take closures in $\cR_d(K_n)$.
To simplify notation, we will write $r_d(G)$ as shorthand for $r_d(E(G))$, and $\cl_d(G) = (V(G),\cl_d(E(G)))$ for the closure of the graph.}
% \added{Rather than specify the ground set, we will often work with the \emph{full $d$-dimensional rigidity matroid} $\mathcal{R}_d := \cR_d(K)$ where $K$ is the complete graph on a countable infinite vertex set.
% As any finite graph $G$ can be embedded into $K$, it follows that $\mathcal{R}_d(G)$ is the matroid restriction of $\mathcal{R}_d$ to the ground set $E(G)$.
% We denote the rank function of $\cR(G)_d$ by $r_d$ and its closure operator by $\cl_d$.
% Moreover, we will simplify notation by writing $r_d(G)$ as shorthand for $r_d(E(G))$.
% Similarly, we write $\cl_d(G) = (V(G),\cl_d(E(G)))$ for the closure of the graph.}

We will simplify terminology by describing $G$ using properties of its edge set in $\cR_d(G)$. 
For example we say that  $G$ is: \emph{$\mathcal{R}_d$-independent} if $r_d(G)=|E|$; \emph{$\mathcal{R}_d$-rigid} if $G$ is a complete graph on at most $d+1$ vertices or $r_d(G)=d|V|-{d+1\choose 2}$; \emph{minimally $\mathcal{R}_d$-rigid} if $G$ is $\mathcal{R}_d$-rigid and $\mathcal{R}_d$-independent; and an \emph{$\mathcal{R}_d$-circuit} if $G$ is not $\mathcal{R}_d$-independent but $G-e$ is $\mathcal{R}_d$-independent for all $e\in E$. We also say that an edge $e$ of $G$ is an \emph{$\cR_d$-bridge} (or coloop) in $G$ if $r_d(G-e) = r_d(G)-1$ holds and that $G$ is \emph{$\cR_d$-flexible} if it is not $\cR_d$-rigid.

The above mentioned observation of Maxwell on the rank of the rigidity matrix implies the following result, 
see \cite[Lemma 11.1.3]{Wlong}.

\begin{lemma}\label{lem:max}
 Let $G=(V,E)$ be a graph. \begin{enumerate}
 \item If $G$ is  $\cR_d$-independent with $|V|\geq d$, then $|E|\leq d|V|-{d+1\choose 2}$. 
 \item If $G$ is an $\cR_d$-circuit then $|E|\leq d|V|-{d+1\choose 2}+1$ with equality if and only if $G$ is $\cR_d$-rigid. 
 \end{enumerate}
\end{lemma}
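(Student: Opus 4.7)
The plan is to derive both parts from the Maxwell bound $\rank R(G,p) \leq d|V| - \binom{d+1}{2}$ recalled just above, combined with the defining characterisation that $\cR_d$-independence is equivalent to $\rank R(G,p) = |E|$ for generic $p$.

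For Part 1, I would split on $|V|$. When $|V| \geq d+1$, a generic realisation $p$ affinely spans $\mathbb{R}^d$, so Maxwell's bound applies and
\[
|E| = \rank R(G,p) \leq d|V| - \binom{d+1}{2}.
\]
When $|V| = d$, the affine span of a generic $p$ has dimension only $d-1$ and Maxwell's hypothesis fails; here I would instead observe that $G$ is a subgraph of $K_d$, so $|E| \leq \binom{d}{2}$, and the elementary identity $\binom{d}{2} = d \cdot d - \binom{d+1}{2}$ closes the case.

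For Part 2, I would apply Part 1 to $G - e$ for an arbitrary edge $e \in E$: since $G$ is an $\cR_d$-circuit, $G - e$ is $\cR_d$-independent on the same vertex set, giving $|E| - 1 \leq d|V| - \binom{d+1}{2}$ and hence the claimed upper bound. For the equality statement, I would first note that every graph on at most $d+1$ vertices is $\cR_d$-independent (since $K_{d+1}$ is minimally $\cR_d$-rigid), so any $\cR_d$-circuit must have $|V| \geq d+2$, and on such vertex sets the defining condition for $\cR_d$-rigidity is $r_d(G) = d|V| - \binom{d+1}{2}$. Because $G$ is a circuit, $r_d(G) = |E| - 1$, so the equality $|E| = d|V| - \binom{d+1}{2} + 1$ is logically equivalent to $r_d(G) = d|V| - \binom{d+1}{2}$, i.e.\ to $G$ being $\cR_d$-rigid.

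There is no substantial obstacle to this approach; the only mildly delicate step is the boundary case $|V| = d$ in Part 1, where the affine-span hypothesis of Maxwell's inequality fails and must be replaced by the trivial edge count in $K_d$.
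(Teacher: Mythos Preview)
Your proposal is correct. The paper does not give its own proof of this lemma; it simply states that the result follows from Maxwell's rank bound and cites \cite[Lemma 11.1.3]{Wlong}. Your argument is exactly the standard derivation implied by that remark, including the appropriate handling of the boundary case $|V|=d$ and the observation that an $\cR_d$-circuit must have $|V|\geq d+2$ (so that Part~1 applies to $G-e$ and so that the rank-based definition of $\cR_d$-rigidity is the operative one). The only cosmetic point is that you invoke the fact $|V|\geq d+2$ slightly after you first apply Part~1 to $G-e$; moving that observation one sentence earlier would make the logical flow cleaner.
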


The next lemma lists some scenarios where we can deduce rigidity or independence of a graph by properties of its subgraphs.

\begin{lemma}[{\cite[Lemma 11.1.9]{Wlong}}]\label{lem:intbridge}
Let $G_1$, $G_2$ be subgraphs of a graph $G$ and suppose that $G=G_1\cup G_2$.
\begin{enumerate}
\item\label{it:intbridge:rig} 
	If $|V(G_1)\cap V(G_2)|\geq d$ and $G_1,G_2$ are $\mathcal{R}_d$-rigid then $G$ is $\mathcal{R}_d$-rigid.
\item\label{it:intbridge:indep} 
	If $G_1\cap  G_2$ is $\mathcal{R}_d$-rigid and $G_1,G_2$ are $\mathcal{R}_d$-independent then $G$ is $\mathcal{R}_d$-independent.
\item\label{it:intbridge:rank} 
	If $|V(G_1)\cap V(G_2)| \leq d-1$, $u\in V(G_1)-V(G_2)$ and $v\in V(G_2)-V(G_1)$ then
	$r_d(G+uv)=r_d(G)+1$.
\end{enumerate}
\end{lemma}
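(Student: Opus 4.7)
The plan is to prove each of the three parts by passing to a generic realisation $p$ of $V(G)$ in $\RR^d$ and analysing the infinitesimal motion space $Z(H) := \ker R(H,p)$ for appropriate subgraphs $H$, using the identity $\dim Z(H) = d|V(H)| - r_d(H)$.

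\textbf{Parts (1) and (2).} For (1), fix $u \in Z(G)$. Since each $G_i$ is $\cR_d$-rigid, $u|_{V_i}$ is the restriction of an infinitesimal isometry $\mu_i$ of $\RR^d$. On $V_1 \cap V_2$, $\mu_1$ and $\mu_2$ agree. Writing $\mu_1 - \mu_2 = Ax + b$ with $A$ skew-symmetric, this common vanishing on at least $d$ generically affinely independent points forces $A$ to have $(d-1)$-dimensional kernel; since skew-symmetric matrices have even rank, $A = 0$, and then $b=0$. Thus $u$ is a trivial motion of $(G,p)$, so $G$ is $\cR_d$-rigid. For (2), consider the linear map $\Phi: Z(G_1) \oplus Z(G_2) \to \RR^{d|V_1 \cap V_2|}$ defined by $\Phi(u_1,u_2) = u_1|_{V_1\cap V_2} - u_2|_{V_1\cap V_2}$; its kernel is canonically isomorphic to $Z(G)$ via $(u_1,u_2) \mapsto $ the glued map. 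The image of $\Phi$ lies in $Z(G_1 \cap G_2)$ because each $u_i|_{V_1\cap V_2}$ already satisfies the constraints of $G_1 \cap G_2 \subseteq G_i$, and the image of each summand under restriction contains all trivial infinitesimal motions of $V_1 \cap V_2$. Rigidity of $G_1 \cap G_2$ identifies $Z(G_1 \cap G_2)$ with its trivial motion space, of dimension $\binom{d+1}{2}$, so $\Phi$ is surjective onto it. Substituting $\dim Z(G_i) = d|V_i| - |E_i|$, the rigidity identity $|E(G_1 \cap G_2)| = d|V_1 \cap V_2| - \binom{d+1}{2}$, and inclusion-exclusion for $|V|$ and $|E|$ into
\[
\dim Z(G) = \dim Z(G_1) + \dim Z(G_2) - \tbinom{d+1}{2}
\]
yields $\dim Z(G) = d|V| - |E|$, so $G$ is $\cR_d$-independent.

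\textbf{Part (3).} For generic $p$, the affine span $W$ of $\{p(x) : x \in V_1 \cap V_2\}$ has dimension at most $d-2$. The space of infinitesimal isometries of $\RR^d$ vanishing pointwise on $W$ has dimension $\binom{d - \dim W}{2} \geq 1$; pick any nonzero such $\mu$. Define $u^*: V \to \RR^d$ by $u^*(x) = 0$ for $x \in V_1$ and $u^*(x) = \mu(p(x))$ for $x \in V_2$; consistency on $V_1 \cap V_2$ is automatic because $\mu$ vanishes on $W$, and every edge constraint of $G$ is satisfied (edges of $E_1$ lie in $V_1$ where $u^* = 0$; edges of $E_2$ lie in $V_2$ where $u^* = \mu \circ p$ is an infinitesimal isometry). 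The quantity $(p(u) - p(v)) \cdot \mu(p(v))$ is a nonzero polynomial in the coordinates of $p$ (one verifies non-vanishing at a toy configuration), so for generic $p$ one has $u^* \notin Z(G+uv)$. Hence $r_d(G+uv) > r_d(G)$, and the reverse inequality $r_d(G+uv) \leq r_d(G)+1$ is automatic.

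\textbf{Main obstacle.} The delicate step is part (2): one must verify that the image of the restriction map $\Phi$ is exactly the full trivial-motion space of $V_1 \cap V_2$ (this is precisely where rigidity of the intersection is used), and then reconcile the vertex, edge and motion-space counts so they telescope to $d|V| - |E|$. Some care is also needed in edge cases where $|V_1 \cap V_2| < d$ and $G_1 \cap G_2$ is rigid only by virtue of being complete, which can be handled separately by a direct rank computation. Parts (1) and (3) reduce to direct facts about skew-symmetric maps and generic point configurations.
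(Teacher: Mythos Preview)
The paper does not prove this lemma; it is quoted verbatim from \cite{Wlong} and used as a black box throughout, so there is no in-paper argument to compare against. Your approach via infinitesimal motion spaces is the standard one and is essentially correct, but two points deserve tightening.

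In part (2), you substitute $\dim Z(G_1\cap G_2)=\binom{d+1}{2}$, which is false when $|V_1\cap V_2|<d$. The clean fix is to avoid this numerical value altogether: since $G_1\cap G_2\subseteq G_1$ is $\cR_d$-independent (as a subgraph of an independent graph) as well as rigid, one has $\dim Z(G_1\cap G_2)=d|V_1\cap V_2|-|E(G_1\cap G_2)|$ directly. The rank--nullity identity
\[
\dim Z(G)=\dim Z(G_1)+\dim Z(G_2)-\dim Z(G_1\cap G_2)
\]
then telescopes to $\dim Z(G)=d|V|-|E|$ by inclusion--exclusion on both vertices and edges, with no case split needed.

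In part (3), the quantity $(p(u)-p(v))\cdot\mu(p(v))$ is not a polynomial in the coordinates of $p$ alone, because your $\mu$ is chosen \emph{after} $p$ (it must vanish on $W$, which depends on $p$). The correct statement is that, for each fixed generic $p$, the map $\mu\mapsto (p(u)-p(v))\cdot\mu(p(v))$ is a nonzero linear functional on the space of infinitesimal isometries annihilating $W$. Concretely, after translating so that $W$ passes through the origin and writing $\mu(x)=Ax$ with $A$ skew-symmetric and $AW=0$, vanishing of this functional for all such $A$ would force the $W^\perp$-components of $p(u)$ and $p(v)$ to be parallel; this fails for generic $p$ since $\dim W^\perp\geq 2$ and $u,v\notin V_1\cap V_2$. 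With this repair the argument goes through.
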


We next discuss some common graph operations used for deducing rigidity.
A graph $G'$ is said to be obtained from another graph $G$ by: a \emph{($d$-dimensional) 0-extension}
if $G=G'-v$ for a vertex  $v\in V(G')$ with $d_{G'}(v)=d$; or a \emph{($d$-dimensional) 1-extension} if $G=G'-v+xy$ for a vertex $v\in V(G')$ with $d_{G'}(v)=d+1$ and $x,y\in N_{G'}(v)$ \added{with $xy \notin E(G')$}.
It is well known that both of these operations preserve $\cR_d$-independence.

\begin{lemma}\label{lem:01ext}\cite[Lemma 11.1.1, Theorem 11.1.7]{Wlong}
Let $G$ be $\cR_d$-independent and let $G'$ be obtained from $G$ by a 0-extension or a 1-extension. Then $G'$ is $\cR_d$-independent.
\end{lemma}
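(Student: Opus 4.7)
The plan is to exhibit, for each of the two operations, a possibly non-generic realisation $p$ of $G'$ at which the rigidity matrix $R(G',p)$ has $|E(G')|$ linearly independent rows. Since the set of realisations at which $R(G',\cdot)$ has full row rank is Zariski open, producing any such $p$ forces full row rank at a generic realisation and hence $\cR_d$-independence of $G'$.

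For the 0-extension, let $v$ be the new vertex with neighbours $u_1,\dots,u_d$ and let $p_0$ be a generic realisation of $G$, so that the rows of $R(G,p_0)$ are linearly independent by hypothesis. Extend $p_0$ to $p$ by choosing a generic point $p(v)\in\RR^d$. The $d$ new rows (indexed by the edges $vu_i$) vanish on every column except those indexed by $v,u_1,\dots,u_d$, and their restrictions to the $d$ columns for $v$ form the $d\times d$ matrix whose $i$th row is $p(v)-p_0(u_i)$. For generic $p(v)$ this matrix is non-singular, so the new rows are linearly independent from one another and from the old rows, which vanish on the $v$-columns. Hence $\rank R(G',p)=|E(G)|+d=|E(G')|$.

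For the 1-extension, write the neighbours of $v$ in $G'$ as $x,y,u_3,\dots,u_{d+1}$, so that $G'-v+xy=G$. Choose a generic $p_0$ for $G$ and extend non-generically by $p(v)=p_0(x)+t(p_0(y)-p_0(x))$ for some $t\notin\{0,1\}$. Setting $q=p_0(y)-p_0(x)$, a direct computation gives the key identity
\[
(1-t)\,R(vx)+t\,R(vy)=t(1-t)\,R(xy),
\]
where $R(e)$ denotes the row of edge $e$ and the right-hand side is interpreted in $R(G',p)$ by extending the $R(G,p_0)$-row of $xy$ with zeros in the $v$-columns. Now assume $\sum_{e\in E(G')}\alpha_e R(e)=0$. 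Restricting to the $v$-columns and observing that, for generic $p_0$, the $d$ vectors $q,\,p(v)-p_0(u_3),\dots,p(v)-p_0(u_{d+1})$ form a basis of $\RR^d$, we deduce $\alpha_{vu_i}=0$ for $i\geq 3$ together with $t\alpha_{vx}+(t-1)\alpha_{vy}=0$. Writing $\alpha_{vx}=(1-t)\beta$ and $\alpha_{vy}=t\beta$ and substituting back, the displayed identity turns the remaining relation into a linear relation among the rows of $R(G,p_0)$ in which the coefficient of $R(xy)$ is $\beta t(1-t)$. Independence of $G$ forces $\beta=0$ and all other $\alpha_e=0$, so $G'$ is $\cR_d$-independent.

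The main obstacle is the 1-extension: one must simultaneously handle the two new rows $R(vx),R(vy)$ and the fact that the edge $xy$ has been removed from $G$. The collinear placement of $p(v)$ on the line through $p_0(x)$ and $p_0(y)$ is the key trick, since it expresses $R(xy)$ as an explicit combination of $R(vx)$ and $R(vy)$; this reduces independence of $G'$ at $p$ to independence of $G$ at $p_0$. The 0-extension, by contrast, is an essentially immediate linear-algebra argument on the new $v$-columns.
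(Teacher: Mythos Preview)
Your proof is correct. The paper itself does not prove this lemma; it simply cites it as \cite[Lemma 11.1.1, Theorem 11.1.7]{Wlong}, so there is no proof in the paper to compare against. What you have written is essentially the standard argument that appears in Whiteley's reference: a block lower-triangular argument for the 0-extension, and for the 1-extension the collinear placement $p(v)=(1-t)p_0(x)+tp_0(y)$ which yields the row identity $(1-t)R(vx)+tR(vy)=t(1-t)R(xy)$, reducing independence of $G'$ to that of $G$. The only point worth making explicit (you assert it but don't justify it) is why the $d$ vectors $q,\,p(v)-p_0(u_3),\dots,p(v)-p_0(u_{d+1})$ are linearly independent for generic $p_0$: this holds because their span equals the linear span of $\{p_0(x)-p(v),\,p_0(u_3)-p(v),\dots,p_0(u_{d+1})-p(v)\}$, and the $d+1$ points $p_0(x),p_0(y),p_0(u_3),\dots,p_0(u_{d+1})$ are affinely independent generically (with $p(v)$ lying on the line through $p_0(x),p_0(y)$).
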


Recall that the cone of a graph $G=(V,E)$ over a new vertex $v$ is the graph
$G*v = (V \cup v, E \cup \{uv \mid u \in V\})$ obtained by adding $v$ and joining it to every vertex of $G$.
The next two lemmas characterise the rigidity matroid of $G*v$ in $(d+1)$-dimensions in terms of the rigidity matroid of $G$ in $d$-dimensions.

\begin{lemma}[\cite{Wcone}]\label{lem:coning+rank}
Let $G*v$ be the cone of a graph $G$ over a new vertex $v$. Then 
\[
r_{d+1}(G*v) = r_d(G) + |V(G)| \, .
\]
In particular, $G$ is $\mathcal{R}_d$-independent if and only if $G*v$ is $\mathcal{R}_{d+1}$-independent.
\end{lemma}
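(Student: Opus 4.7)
My plan is to compute $r_{d+1}(G*v)$ by counting infinitesimal motions of $(G*v,\tilde p)$ for a generic realisation $\tilde p$, reducing everything to the known generic $d$-dimensional rigidity of $G$. After translating so that $\tilde p(v)=0$, I can write $\tilde p(u)=(p(u),h_u)$ for each $u\in V(G)$ with all coordinates $\{p(u)_i,h_u\}$ algebraically independent over $\mathbb{Q}$. The kernel of $R(G*v,\tilde p)$ decomposes as the direct sum of the $(d+1)$-dimensional translation subspace and the subspace of infinitesimal motions fixing $v$, so it suffices to compute the dimension of the latter.

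Given such a motion $\tilde m$ with $\tilde m(v)=0$, written in coordinates as $\tilde m(u)=(m(u),\mu_u)$, the cone-edge constraint $\tilde p(u)\cdot\tilde m(u)=0$ forces $\mu_u=-p(u)\cdot m(u)/h_u$, so $\tilde m$ is parametrised freely by $m\colon V(G)\to \RR^d$. Substituting this into $(\tilde p(u)-\tilde p(w))\cdot(\tilde m(u)-\tilde m(w))=0$ for each edge $uw\in E(G)$ and clearing denominators produces, after a short computation, the equation $(h_w p(u)-h_u p(w))\cdot (h_w m(u)-h_u m(w))=0$. The key observation is that the change of variables $q(u)=p(u)/h_u$ and $n(u)=m(u)/h_u$ converts this into the standard rigidity constraint $(q(u)-q(w))\cdot(n(u)-n(w))=0$ for the realisation $(G,q)$ in $\RR^d$.

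Since $(p,h)\mapsto(q,h)$ is birational with polynomial inverse $p(u)=h_uq(u)$, the coordinates of $(q,h)$ are algebraically independent as well, so $q$ is a generic realisation of $G$. The valid $n$ therefore form a space of dimension $d|V(G)|-r_d(G)$, and tracking the bijections gives the same dimension for the subspace of infinitesimal motions of $(G*v,\tilde p)$ fixing $v$. Adding the $(d+1)$ translations and subtracting from $(d+1)(|V(G)|+1)$ yields $r_{d+1}(G*v)=|V(G)|+r_d(G)$, as claimed. The main obstacle is the algebraic reduction of the $(d+1)$-dimensional edge constraint to a $d$-dimensional one; the fact that both factors in the resulting dot product carry the same antisymmetric form $h_wp(u)-h_up(w)$ is what allows the substitution to produce an honest rigidity matrix for $G$. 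The ``in particular'' statement is then immediate: $G*v$ is $\cR_{d+1}$-independent iff $r_{d+1}(G*v)=|E(G*v)|=|E(G)|+|V(G)|$, and the rank formula reduces this to $r_d(G)=|E(G)|$, i.e.\ $G$ being $\cR_d$-independent.
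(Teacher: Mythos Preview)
The paper does not prove this lemma; it is quoted from Whiteley~\cite{Wcone} without proof. Your argument is correct and is essentially Whiteley's original projection argument carried out in coordinates: once you translate the cone vertex to the origin, your change of variables $q(u)=p(u)/h_u$ is exactly the central projection of $\tilde p(u)=(p(u),h_u)$ from the cone vertex onto the hyperplane $\{x_{d+1}=1\}$, and your computation that the edge constraint becomes $(q(u)-q(w))\cdot(n(u)-n(w))=0$ is the algebraic content of that projection preserving rigidity. The paper in fact later records this geometric version as Lemma~\ref{lem:geometriccone}, again without proof, so your write-up supplies the missing details in a self-contained way.

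A couple of minor points worth making explicit if you keep this proof. First, translating $\tilde p$ so that $\tilde p(v)=0$ destroys genericity of $\tilde p$ itself, but the rank of $R(G*v,\tilde p)$ is translation-invariant and the remaining coordinates $\{p(u)_i,h_u:u\in V(G)\}$ are still algebraically independent (as you implicitly use). Second, your ``direct sum'' claim is justified because any infinitesimal motion $\tilde m$ decomposes uniquely as the constant motion $u\mapsto\tilde m(v)$ plus the motion $u\mapsto\tilde m(u)-\tilde m(v)$ fixing $v$, and both lie in the kernel. With those remarks the proof is complete.
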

\begin{lemma}[\cite{GGJ}]\label{lem:coning+circuit}
Let $G*v$ be the cone of a graph $G$ over a new vertex $v$. Then $G$ is an $\mathcal{R}_d$-circuit if and only if $G*v$ is an $\mathcal{R}_{d+1}$-circuit.
\end{lemma}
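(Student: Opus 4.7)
The plan is to prove both directions using Whiteley's coning lemma (Lemma~\ref{lem:coning+rank}).

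The ``if'' direction will be immediate: if $G*v$ is an $\cR_{d+1}$-circuit, then $G*v$ is $\cR_{d+1}$-dependent, so $G$ will be $\cR_d$-dependent by Whiteley. For each $e \in E(G)$ the graph $G*v - e = (G-e)*v$ is $\cR_{d+1}$-independent, so $G - e$ will be $\cR_d$-independent by the same lemma, showing $G$ is a circuit.

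For the ``only if'' direction, I will assume $G$ is an $\cR_d$-circuit. Whiteley's rank formula gives $|E(G*v)| = |E(G)| + |V(G)| = r_d(G) + 1 + |V(G)| = r_{d+1}(G*v) + 1$, so the rigidity matroid on $E(G*v)$ has nullity one and hence a unique circuit $C$. It will suffice to show $C = E(G*v)$, equivalently that no edge of $G*v$ is a coloop. Each edge $e \in E(G)$ is placed in $C$ directly: $G*v - e = (G-e)*v$ is $\cR_{d+1}$-independent by Whiteley, so $e$ is not a coloop. The main content of the proof will be to show each edge $vu$ with $u \in V(G)$ likewise lies in $C$, equivalently that $G*v - vu$ is $\cR_{d+1}$-independent.

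To handle $vu$ I will decompose $G*v - vu = (G-u)*v \cup \{uw : w \in N_G(u)\}$. Since the edges of $G$ incident to $u$ form a subset of $E(G) - e$ for any such edge $e$ at $u$, the graph $G - u$ is $\cR_d$-independent by the circuit property, and so $(G-u)*v$ is $\cR_{d+1}$-independent by Whiteley. I then plan to analyse the stress space of $G*v - vu$ at a generic realisation in $\mathbb{R}^{d+1}$: any such stress restricts to a coefficient vector $\alpha$ on the edges at $u$ which must satisfy the vector equilibrium equation $\sum_{w \in N_G(u)} \alpha_{uw}(p'(u) - p'(w)) = 0$ in $\mathbb{R}^{d+1}$, and must induce boundary values at the neighbours of $u$ realisable as the boundary of some stress on the independent graph $(G-u)*v$. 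A dimension count based on Lemma~\ref{lem:coning+rank} and Maxwell's bound will show that the codimension of the image of the boundary map of $(G-u)*v$ inside the trivial-motion-perpendicular space, combined with the constraint cut out by the equilibrium equation at $u$, exactly fills the ambient dimension; under generic position only the trivial solution survives. Hence $G*v - vu$ is $\cR_{d+1}$-independent and $vu \in C$ as required.

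The hardest step will be justifying the generic-position assumption used in this dimension count, since both subspaces depend on the realisation and one must check that algebraic independence of the vertex coordinates suffices to put them in generic position. This is where the circuit hypothesis is crucial: the precise matching of dimensions that forces a trivial intersection reflects the single dependency in an $\cR_d$-circuit being ``absorbed'' by the extra dimension after coning, and is the step that would fail for an arbitrary $\cR_d$-dependent graph.
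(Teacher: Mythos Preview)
The paper does not prove this lemma; it is quoted from \cite{GGJ}. So there is no ``paper's own proof'' to compare against, and your attempt must stand on its own.

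Your ``if'' direction is clean and correct, and so is the reduction of the ``only if'' direction to showing that every cone edge $vu$ lies in the unique $\cR_{d+1}$-circuit $C$, equivalently that $G*v-vu$ is $\cR_{d+1}$-independent. The gap is in your treatment of this last step. The dimension count you describe cannot by itself force the stress space of $(G*v-vu,p')$ to be trivial. With the two relevant subspaces sitting inside the trivial-motion-perpendicular space of dimension $(d+1)|V(G)|-\binom{d+2}{2}$, one finds that the sum of their dimensions is $|E(G)|+|V(G)|-d-2$, which is \emph{equal} to the ambient dimension precisely when $G$ is $\cR_d$-rigid (by Lemma~\ref{lem:max}). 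So even numerically you only get $\dim(A\cap B)\geq 0$, not $=0$; the argument then hinges entirely on the transversality claim you flag as ``the hardest step''. But proving transversality for generic $p'$ is exactly the statement that $G*v-vu$ has no nontrivial stress generically, which is what you are trying to show. As written, the argument is circular, and the circuit hypothesis has not actually been used beyond the numerics.

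A non-circular route, in the spirit of the paper's proof of Theorem~\ref{thm:coning-k-circuits-strong}, is to exhibit one realisation where the desired rank is attained. Place $V(G)$ generically in the hyperplane $\cH=\{x_{d+1}=0\}$ and $p'(v)$ generically off $\cH$; by Lemma~\ref{lem:geometriccone} the unique stress of $(G*v,p')$ is the nowhere-zero $\cR_d$-stress $\omega$ of $G$ extended by zero on all cone edges. Now perturb a single neighbour $w\in N_G(u)$ out of $\cH$: by semicontinuity the rank stays at $r_{d+1}(G*v)$ and the perturbed stress $\omega_q$ still satisfies $\omega_q(uw)\neq 0$. The $(d{+}1)$-th coordinate of the equilibrium condition \eqref{eq:1} at $u$ then reads $\omega_q(uw)\,q(w)_{d+1}+\omega_q(uv)\,q(v)_{d+1}=0$, forcing $\omega_q(uv)\neq 0$. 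Hence $vu$ is not an $\cR_{d+1}$-bridge and $vu\in C$. This replaces your unfinished generic-position argument with a concrete special realisation where the circuit hypothesis (via the nowhere-zero stress on $E(G)$) is used in an essential way.
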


As a generalisation of \added{$\cR_d(K_n)$}, Graver \cite{Gra} introduced an abstract family of matroids which share many of the properties of rigidity matroids.
An \emph{abstract $d$-rigidity matroid} is a matroid $\cM$ on the edge set of the complete graph $K_n$ with closure operator $\cl_\cM$ which satisfies the following two properties.
\begin{enumerate}
    \item[(R1)] If $G_1,G_2 \subseteq K_n$ with $|V(G_1)\cap V(G_2)|\leq d-1$, then $\cl_\cM(G_1\cup G_2)\subseteq K_{V(G_1)}\cup K_{V(G_2)}$.
    \item[(R2)] If $G_1,G_2 \subseteq K_n$ with $|V(G_1)\cap V(G_2)| \geq d$ and $\cl_\cM(G_1) = K_{V(G_1)}$, $\cl_\cM(G_2) = K_{V(G_2)}$, then $\cl_\cM(G_1 \cup G_2) = K_{V(G_1) \cup V(G_2)}$.
\end{enumerate}
These axioms hold when $\cM=\added{\cR_d(K_n)}$ by Lemma~\ref{lem:intbridge} \eqref{it:intbridge:rank} and \eqref{it:intbridge:rig} respectively.
A graph $G\subseteq K_n$ is said to be \emph{$\cM$-rigid} if $\cl_\cM(G) = K_{V(G)}$.
Some results we state for \added{$\cR_d(K_n)$} will generalise straightforwardly to abstract $d$-rigidity matroids.

\subsection{$k$-fold circuits}

We next recall the concepts and results from \cite{JNS} that we will need. 

\begin{definition}
    Let $\cM = (E,r)$ be a matroid and $k \in \NN$ a nonnegative integer.
    A \emph{$k$-fold circuit}  of $\cM$ is a cyclic set $D \subseteq E$ with $r(D) = |D| - k$.
    The \emph{principal partition of $D$} is the partition $\{A_1, \dots, A_\ell\}$ of $D$ where $\{B_i \mid 1 \leq i \leq \ell\}$ is the set of all $(k-1)$-fold circuits of $\cM$ contained in $D$, and $A_i=D \setminus B_i$ for $1\leq i\leq \ell$.
\end{definition}

\added{As with other rigidity matroidal properties of graphs, we say $G = (V,E)$ is a \emph{$k$-fold $\cR_d$-circuit} if $E$ is a $k$-fold circuit in \added{$\cR_d(G)$}, i.e. $r_d(G) = r_d(G - e) = |E| - k$ for all $e \in E(G)$.}

\added{The fact that the principal partition is indeed a partition of $D$ follows from arguments on $\cM^*$ and its lattice of flats; see Remark \ref{rem:lattice+restriction}.
Alternatively, one can see the principal partition forms a partition by defining it in terms of an equivalence relation on $D$.
This is what the next result states, along with a lower bound on the number of parts in any $k$-fold circuit.}
%The next result states that we can equivalently define the principal partition of a $k$-fold circuit in terms of an equivalence relation defined in terms of rank function of $\cM$ and that there is a lower bound on the number of parts in any $k$-fold circuit.

\begin{proposition}[\cite{JNS}]\label{prop:principalpartfork}
Let $\cM=(E,r)$ be a matroid and $D \subseteq E$ be a $k$-fold circuit of $\cM$.
    \begin{enumerate}[label=(\alph*)]
        \item The principal partition of $D$ has at least $k$ parts,
        \item Two elements $e,f \in D$ are contained in the same part of the principal partition if and only if $r(D-e-f) = r(D) -1$.
    \end{enumerate}
\end{proposition}

We say that a $k$-fold circuit is  \emph{trivial}  if its principal partition has $k$ parts,  and {\em non-trivial} otherwise.
Trivial $k$-fold circuits can be characterised as follows.

\begin{lemma}[\cite{JNS}]\label{lem:trivialkcircuit}
    Let $D$ be a trivial $k$-fold circuit in a matroid $\cM$ and $\{A_1,A_2,\ldots,A_k\}$ be its principal partition. Then $A_i$ is a circuit of $\cM$ for all $1\leq i\leq k$ and $\cM|_D=\cM|_{A_1}\oplus \cM|_{A_2}\oplus \ldots \oplus \cM|_{A_k}$.
\end{lemma}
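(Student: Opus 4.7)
The plan is to use Proposition~\ref{thm:principalpartfork}(b) to decompose each $\cM|_{D-e}$ as a direct sum of a cyclic matroid and a free matroid, and then to induct on $k$. First, fix $e\in A_i$. An element $f\in D-e$ is a coloop of $\cM|_{D-e}$ if and only if $r(D-e-f)=r(D-e)-1=r(D)-1$, which by Proposition~\ref{thm:principalpartfork}(b) occurs exactly when $f\in A_i\setminus\{e\}$. Since the coloops of a matroid form a free direct summand, this gives
\[
\cM|_{D-e}\;=\;\cM|_{B_i}\,\oplus\,\cM|_{A_i-e},
\]
with $\cM|_{A_i-e}$ the free matroid on $A_i-e$. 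In particular $A_i-e$ is independent in $\cM$ for every $e\in A_i$, so $A_i$ is either independent or a circuit of $\cM|_D$, and every circuit of $\cM|_{D-e}$ lies in $B_i$. Letting $e$ and $i$ range freely, this last fact implies that for each $i$ every circuit of $\cM|_D$ either contains $A_i$ in its entirety or is disjoint from $A_i$, so every circuit of $\cM|_D$ is a union of parts of the principal partition.

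Next I would proceed by induction on $k$. The base case $k=1$ is immediate. For the inductive step, the crucial subclaim is that $B_i$ is a trivial $(k-1)$-fold circuit with principal partition $\{A_j:j\neq i\}$. Granted this, the inductive hypothesis applied to $B_i$ yields $\cM|_{B_i}=\bigoplus_{j\neq i}\cM|_{A_j}$ with every $A_j$ ($j\neq i$) a circuit of $\cM$; varying $i$ establishes that every $A_j$ is a circuit. Since every circuit of $\cM|_D$ is a union of $A_j$'s and each $A_j$ is now known to be a minimal dependent set, no circuit of $\cM|_D$ can properly contain any $A_j$, so each circuit of $\cM|_D$ equals exactly one $A_j$. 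Combined with the decomposition $\cM|_{D-e}=\cM|_{B_i}\oplus\cM|_{A_i-e}$, this yields $\cM|_D=\bigoplus_{j=1}^{k}\cM|_{A_j}$.

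The main obstacle is verifying the subclaim on the principal partition of $B_i$. By Proposition~\ref{thm:principalpartfork}(b) applied to $B_i$, this reduces to computing $r(B_i-x-y)$ for $x,y\in B_i$, and via the decomposition $\cM|_{D-e}=\cM|_{B_i}\oplus\cM|_{A_i-e}$ this further reduces to showing that $e\in A_i$ is not a coloop of $\cM|_{D-x-y}$, equivalently that some circuit of $\cM|_D$ contains $e$ but avoids both $x$ and $y$. When $x$ and $y$ lie in the same part $A_l$ (with $l\neq i$), the coloop analysis applied to $\cM|_{D-x}$ places $e$ in the cyclic summand $\cM|_{B_l}$, and any circuit of $\cM|_{B_l}$ through $e$ lies in $B_l=D-A_l$ and hence avoids both $x$ and $y$. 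When $x$ and $y$ lie in different parts, more care is needed: one combines the coloop decompositions of both $\cM|_{D-x}$ and $\cM|_{D-y}$ with Proposition~\ref{thm:principalpartfork}(b) applied to $D$ to pin down $r(D-e-x-y)$ exactly, and this is where the bulk of the technical work will lie.
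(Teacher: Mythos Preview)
This lemma is cited from \cite{JNS} and is not proved in the present paper, so there is no in-paper argument to compare against; I will simply assess your proposal on its own terms.

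Your strategy is correct and in fact already complete --- you have only misidentified where the difficulty lies. The case you flag as ``the bulk of the technical work'', namely $x\in A_l$ and $y\in A_m$ with $l\neq m$, requires no separate treatment once the same-part case is done. Your same-part argument shows that each $A_j$ with $j\neq i$ is contained in a single part of the principal partition of $B_i$; hence that partition is a coarsening of $\{A_j:j\neq i\}$ and so has at most $k-1$ parts. But $B_i$ is a $(k-1)$-fold circuit, and Proposition~\ref{thm:principalpartfork}(a) forces its principal partition to have at least $k-1$ parts. The two bounds together give the subclaim outright: the principal partition of $B_i$ is exactly $\{A_j:j\neq i\}$, and your induction then runs as written. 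You never need to exhibit a circuit through $e$ avoiding both $x$ and $y$ in the different-part case.

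As an aside, a short non-inductive proof is available via Remark~\ref{rem:lattice+restriction}. Since $D$ is cyclic, $(\cM|_D)^*$ is loopless of rank $k$, and triviality says it has exactly $k$ rank-one flats (the $A_i$). A loopless rank-$k$ matroid with exactly $k$ parallel classes has simplification $U_{k,k}$, so $(\cM|_D)^*\cong\bigoplus_{i} U_{1,|A_i|}$; dualising gives $\cM|_D\cong\bigoplus_{i} U_{|A_i|-1,|A_i|}$, which is precisely the claimed direct sum of circuits.
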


\begin{example}\label{ex:dc}
Figure \ref{fig:2double} provides four examples of double $\cR_2$-circuits.
In both (a) and (b), there are exactly two $\cR_2$-circuits, the two copies of $K_4$ in each case.
The corresponding principal partitions $\{A_1,A_2\}$ are immediate.
As such, they are both trivial double circuits of $\cR_2$.
In (c), both copies of $K_4$ are $\cR_2$-circuits but so is the graph obtained by deleting the edge $v_1v_2$.
Hence the principal partition has 3 parts $\{A_1,A_2,A_3\}$ where $A_3=\{v_1v_2\}$.
In (d), there are 7 distinct $\cR_2$-circuits. These are the unique copy of $K_4$ and the 6 (spanning) $\cR_2$-circuits obtained by deleting exactly one edge from the $K_4$. Hence the principal partition $\{A_1,A_2,\dots,A_7\}$ has $A_1$ equal to the set of 5 edges incident to $u_1$ and $u_2$ and $A_2, A_3,\dots,A_7$ are singleton sets corresponding to the edges in the $K_4$.
\end{example}

\begin{figure}[h]
\begin{center}
\includegraphics[width=\textwidth]{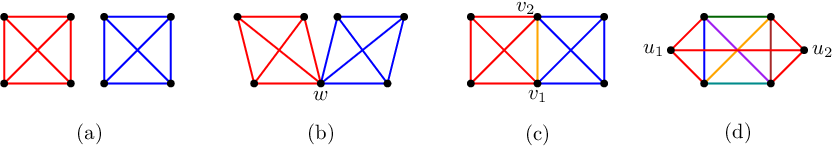}
\end{center}
\caption{Four examples of double $\cR_2$-circuits.}
\label{fig:2double}
\end{figure}

\begin{remark}\label{rem:lattice+restriction}
    Many of the fundamental properties of $k$-fold circuits arise from the observation that $D \subseteq E$ is a $k$-fold circuit of a matroid $\cM$ if and only if $E \setminus D$ is a flat of the dual matroid $\cM^*$ of rank $|E|-r(E) - k$.
    From this, one can deduce that the cyclic sets of $\cM$ form a lattice that is dual to the lattice of flats of $\cM^*$.
    This viewpoint also sheds light on the principal partition of $D$: it is precisely the set of rank one flats of $(\cM|_D)^*$, the dual of the restriction of $\cM$ to $D$.
    For a detailed discussion, see \cite{JNS}.
\end{remark}

Our definition of the $k$-fold circuit property
is motivated by the following theorem.

\begin{theorem}[\cite{JNS}]\label{thm:strong}
    Let $D$ be a $k$-fold circuit in a matroid $\cM=(E,r)$ and $\{A_1, \dots, A_\ell\}$ be the principal partition of $D$. Then 
    \begin{equation}\label{eq:dcpkrev}
   r\left(\bigcap_{i=1}^\ell \cl(D\setminus A_i)\right) \leq \ell-k\,.
\end{equation} 
In addition, \eqref{eq:dcpkrev} holds with equality {if and only if}
$\bigcap_{i=1}^{n-1} \cl(D\setminus A_i)$ and $\cl(D\setminus A_n)$ is a modular pair of flats of $\cM$ for all $2\leq n\leq \ell$.
\end{theorem}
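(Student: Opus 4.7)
The plan is to prove both statements together via a single telescoping submodularity argument along the chain $F_n := \bigcap_{i=1}^n \cl(D\setminus A_i)$. Write $B_i := D\setminus A_i$. By definition of the principal partition each $B_i$ is a $(k-1)$-fold circuit of $\cM$, so $r(\cl(B_i)) = r(B_i) = |B_i| - (k-1) = |D| - |A_i| - k + 1$.

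The key structural claim is that $r(F_{n-1} \cup \cl(B_n)) = r(D) = |D|-k$ for every $2 \leq n \leq \ell$. The bound $r(F_{n-1} \cup \cl(B_n)) \leq r(D)$ is immediate since $F_{n-1} \cup \cl(B_n) \subseteq \cl(D)$. For the reverse direction I would show $D \subseteq F_{n-1} \cup \cl(B_n)$: trivially $B_n \subseteq \cl(B_n)$, and for each $i<n$ the pairwise disjointness of the parts gives $A_n \subseteq B_i \subseteq \cl(B_i)$, so $A_n \subseteq F_{n-1}$, and hence $D = A_n \cup B_n \subseteq F_{n-1} \cup \cl(B_n)$.

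Combining this with the submodular inequality applied to $X = F_{n-1}$ and $Y = \cl(B_n)$ yields
\[
r(F_n) \;\leq\; r(F_{n-1}) + r(\cl(B_n)) - r(F_{n-1} \cup \cl(B_n)) \;=\; r(F_{n-1}) + 1 - |A_n|.
\]
Telescoping from $r(F_1) = |D| - |A_1| - k + 1$ and using that $\{A_1,\dots,A_\ell\}$ partitions $D$ then yields $r(F_\ell) \leq \ell - k$, which is \eqref{eq:dcpkrev}.

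For the equality clause, since the identity $r(F_{n-1} \cup \cl(B_n)) = r(D)$ always holds exactly, the telescoped upper bound is attained if and only if each submodular step is tight; by definition, equality in $r(X\cap Y) + r(X\cup Y) \leq r(X) + r(Y)$ for a pair of flats $X,Y$ is precisely the modular pair condition, giving the stated characterisation. The only real subtlety is spotting the ``union saturates $\cl(D)$'' identity: this is where the combinatorics of the principal partition (pairwise disjointness of the $A_i$ together with them covering $D$) enters decisively, and without it the submodular chain would not telescope to $\ell - k$.
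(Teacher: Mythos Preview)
Your argument is correct. Note, however, that this theorem is quoted from \cite{JNS} and is not proved in the present paper, so there is no in-paper proof to compare against. Your telescoping submodularity argument along the chain $F_n = \bigcap_{i=1}^n \cl(D\setminus A_i)$ is the natural route: the key identity $r(F_{n-1}\cup\cl(B_n)) = r(D)$ holds exactly as you say (monotonicity of closure gives $F_{n-1}\cup\cl(B_n)\subseteq\cl(D)$, while disjointness of the $A_i$ forces $A_n\subseteq F_{n-1}$ so that $D\subseteq F_{n-1}\cup\cl(B_n)$), the submodular step $r(F_n)\le r(F_{n-1})+1-|A_n|$ then telescopes to $\ell-k$, and since the union identity is always exact, tightness throughout is precisely modularity of each pair $(F_{n-1},\cl(B_n))$.
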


\added{We say a $k$-fold circuit $D$ is \emph{balanced} if \eqref{eq:dcpkrev} holds with equality.} %the right hand side of \eqref{eq:dcpkrev} achieves its maximum possible value.
Our next result shows that the direct sum of balanced $k$-fold circuits remains balanced.

\begin{theorem}[\cite{JNS}]\label{thm:disconalt} 
Suppose $D$ is a disconnected $k$-fold circuit in a matroid $\cM$ and $D_1,D_2,\ldots,D_s$ are its connected components.
If $D_i$ is a balanced $k_i$-fold circuit for all $1\leq i\leq s$, then $D$ is a balanced $k$-fold circuit.
\end{theorem}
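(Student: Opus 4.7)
The plan is to show $r(T) = \ell - k$ where $T := \bigcap_{i,j} \cl(D \setminus A_{i,j})$. Since Theorem~\ref{thm:strong} already gives $r(T) \leq \ell - k$, the work is to prove the matching lower bound. First I would handle the bookkeeping: write $\cM|_D = \bigoplus_{i=1}^s \cM|_{D_i}$, let $\{A_{i,1}, \ldots, A_{i,\ell_i}\}$ be the principal partition of $D_i$, and observe by additivity of rank in the direct sum that $k = \sum_i k_i$. Combining Proposition~\ref{thm:principalpartfork}(b) with that additivity, the equivalence relation defining the principal partition of $D$ restricts on each component to the one defining $D_i$'s partition, while cyclicity of each $D_i$ forces $r(D - e - f) = r(D)$ whenever $e, f$ lie in different components; hence the principal partition of $D$ is $\{A_{i,j}\}_{i,j}$ with $\ell = \sum_i \ell_i$.

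The heart of the argument is the \emph{skew} structure of the flats $\cl(D_1), \ldots, \cl(D_s)$ in $\cM$. Direct-sum additivity $r(D_i \cup D_j) = r(D_i) + r(D_j)$ together with submodularity forces $r(\cl(D_i) \cap \cl(D_j)) = 0$ for $i \neq j$ and $r(\cl(D_i) \cup \cl(D_j)) = r(D_i) + r(D_j)$. Iterating yields $r(\bigcup_i \cl(D_i)) = \sum_i r(\cl(D_i))$, and from this I deduce the key additivity statement: for any subsets $X_i \subseteq \cl(D_i)$, $r(\bigcup_i X_i) = \sum_i r(X_i)$. The cleanest argument is to take bases $I_i$ of $X_i$, extend them to bases $B_i$ of $\cl(D_i)$, and note that $\bigsqcup_i B_i$ must be independent because its size equals $r(D)$.

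To finish, set $T_i^* := \bigcap_j \cl(D_i \setminus A_{i,j})$, so balance of $D_i$ gives $r(T_i^*) = \ell_i - k_i$. Each $T_i^*$ sits inside $T$: for coordinates $(i, j')$ in the same component we have $T_i^* \subseteq \cl(D_i \setminus A_{i,j'}) \subseteq \cl(D \setminus A_{i,j'})$, while for $(i', j')$ with $i' \neq i$ the inclusion $D_i \subseteq D \setminus A_{i',j'}$ gives $T_i^* \subseteq \cl(D_i) \subseteq \cl(D \setminus A_{i',j'})$. Applying skew additivity to $T_i^* \subseteq \cl(D_i)$ then yields $r(T) \geq r(\bigcup_i T_i^*) = \sum_i (\ell_i - k_i) = \ell - k$, which closes the argument.

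The main obstacle, I expect, is the temptation to describe $T$ as a set via the naive identity $\cl((D_i \setminus A_{i,j}) \cup \bigcup_{i' \neq i} D_{i'}) = \cl(D_i \setminus A_{i,j}) \cup \bigcup_{i' \neq i} \cl(D_{i'})$; this fails in general, because an element $e \notin D$ can sit in the larger closure without belonging to any individual $\cl(D_{i'})$ (imagine a chord of a graph joining distinct components of $\cM|_D$). The fix is to give up on computing $T$ on the nose and settle for the weaker but sufficient inclusion $\bigcup_i T_i^* \subseteq T$, combined with rank additivity inside the skew family $\{\cl(D_i)\}$.
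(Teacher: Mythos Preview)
The paper does not include a proof of this theorem; it is quoted from \cite{JNS} without argument. So there is no in-paper proof to compare against, and the question is simply whether your argument is correct. It is.

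Your bookkeeping agrees with Proposition~\ref{prop:decompose}, which the paper also cites from \cite{JNS}: the principal partition of $D$ is the disjoint union of those of the $D_i$, so $\ell=\sum_i\ell_i$ and $k=\sum_i k_i$. The skew-flat step is sound: from $r(D_i\cup D_j)=r(D_i)+r(D_j)$ and submodularity you get $r(\cl(D_i)\cap\cl(D_j))=0$, and since $\bigcup_i\cl(D_i)\subseteq\cl(D)$ has rank at most $r(D)=\sum_i r(D_i)$ while containing $D$, the union of bases $B_i$ of the $\cl(D_i)$ is a spanning set of the right size, hence independent. (The $B_i$ are automatically disjoint: any common element would lie in a rank-zero intersection, hence be a loop, which cannot occur in a basis.) This yields the additivity $r(\bigcup_i X_i)=\sum_i r(X_i)$ for $X_i\subseteq\cl(D_i)$.

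The containment $T_i^*\subseteq T$ is verified correctly in both cases, and applying additivity to $X_i=T_i^*$ gives $r(T)\geq\sum_i(\ell_i-k_i)=\ell-k$, matching the upper bound from Theorem~\ref{thm:strong}. Your closing remark is also well placed: working at the level of ranks via $\bigcup_i T_i^*\subseteq T$, rather than attempting to identify $T$ as a set, sidesteps the genuine failure of $\cl$ to distribute over the decomposition in the ambient matroid.
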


\begin{corollary}[\cite{JNS}]\label{cor:trivialstrongalt} For all $k\geq 1$, every trivial $k$-fold circuit of a matroid is balanced.
\end{corollary}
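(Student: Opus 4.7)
My plan is to show this as a one-line consequence of Theorem~\ref{thm:strong}. The key observation is that ``trivial'' means the principal partition $\{A_1,\ldots,A_\ell\}$ of the $k$-fold circuit $D$ has exactly $\ell = k$ parts, so the upper bound $\ell - k$ on the right-hand side of the inequality in Theorem~\ref{thm:strong} collapses to $0$, forcing equality via nonnegativity of rank.

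Concretely, I would first invoke Theorem~\ref{thm:strong} applied to $D$ with $\ell = k$, obtaining
\[
r\!\left(\bigcap_{i=1}^{k} \cl(D \setminus A_i)\right) \;\leq\; \ell - k \;=\; 0.
\]
I would then note that the rank function of any matroid is nonnegative, so the displayed quantity must in fact equal $0 = \ell - k$, which is exactly the condition that $D$ be balanced.

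There is no real obstacle here; the entire content of the argument is matching the upper bound supplied by Theorem~\ref{thm:strong} with the trivial nonnegativity lower bound on rank, which happens to align precisely when $\ell = k$. If this argument feels too slick, an alternative route I would take is to combine Lemma~\ref{lem:trivialkcircuit}, which writes $\cM|_D$ as a direct sum of the restrictions to the $k$ circuits $A_1,\ldots,A_k$, with Theorem~\ref{thm:disconalt} applied to these $k$ connected components. This would reduce the claim to verifying that an ordinary circuit $C$ is balanced as a $1$-fold circuit, which is immediate since its principal partition is the single block $A_1 = C$ and $r(\cl(C \setminus A_1)) = r(\cl(\emptyset)) = 0 = 1 - 1$.
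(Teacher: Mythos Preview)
Your proposal is correct. Your primary argument via Theorem~\ref{thm:strong} is actually slightly more direct than the route the paper signals: the paper places this result immediately after Theorem~\ref{thm:disconalt} (and cites both from \cite{JNS}), indicating that the intended derivation is your \emph{alternative} argument, namely using Lemma~\ref{lem:trivialkcircuit} to decompose $\cM|_D$ as a direct sum of circuits and then applying Theorem~\ref{thm:disconalt} together with the trivial fact that every $1$-fold circuit is balanced. Your first argument bypasses the decomposition entirely by observing that $\ell=k$ forces the upper bound in Theorem~\ref{thm:strong} to be zero, which nonnegativity of rank then pins down exactly; this is shorter and uses less machinery. Either route is fine, and since you already supply both, there is nothing to add.
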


\subsection{Operations on $k$-fold circuits}

There are a number of natural matroid operations that preserve cyclic sets.
The following result demonstrate how $k$-fold circuits and their principal partitions behave under direct sum.

\begin{proposition}[\cite{JNS}]\label{prop:decompose}
    Let $\cM = (E,r)$ be a matroid and $D,D_1,\ldots,D_s \subseteq E$ such that  $\cM|_D = \bigoplus_{i=1}^s\cM|_{D_i}$.
    Then $D$ is a $k$-fold circuit if and only if each $D_i$ is a $k_i$-fold circuit and $k = \sum_{i=1}^s k_i$.
    Moreover, if $D_i$ has principal partition $\cA_i$, then the principal partition of $D$ is $\cA = \bigcup_{i=1}^s \cA_i$.
\end{proposition}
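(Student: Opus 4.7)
The plan is to prove the equivalence via additivity of the rank function under direct sum, and then identify the principal partition by using the characterisation in Proposition \ref{thm:principalpartfork}(b).

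First I would prove the forward direction. If each $D_i$ is a $k_i$-fold circuit, then $D_i$ is a union of circuits of $\cM|_{D_i}$, hence of $\cM|_D$, so $D = \bigcup_i D_i$ is cyclic. Additivity of rank under direct sum gives $r(D) = \sum_i r(D_i) = \sum_i (|D_i| - k_i) = |D| - \sum_i k_i$, so $D$ is a $k$-fold circuit with $k = \sum_i k_i$. Conversely, if $D$ is a $k$-fold circuit, then every $e \in D_i$ lies in some circuit $C$ of $\cM|_D$; because $\cM|_D$ is a direct sum of the $\cM|_{D_j}$, the circuit $C$ must lie in a single component, and since $e \in D_i$ this forces $C \subseteq D_i$. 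Hence each $D_i$ is cyclic, and setting $k_i := |D_i| - r(D_i)$ makes each $D_i$ a $k_i$-fold circuit; summing and using additivity yields $k = \sum_i k_i$.

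For the moreover part, let $\cA = \bigcup_{i=1}^s \cA_i$, which is visibly a partition of $D$. By Proposition \ref{thm:principalpartfork}(b) applied to $D$ (and to each $D_i$), it suffices to check, for any distinct $e, f \in D$, that $r(D-e-f) = r(D) - 1$ if and only if $e, f$ lie in the same part of $\cA$. If $e \in D_i$ and $f \in D_j$ with $i \neq j$, then cyclicity of each component gives $r(D_i - e) = r(D_i)$ and $r(D_j - f) = r(D_j)$, so additivity yields $r(D-e-f) = r(D)$; such $e,f$ are in different parts of $\cA$, consistent with being in different parts of the principal partition of $D$. If instead $e, f \in D_i$, then additivity gives
\[
r(D-e-f) = r(D_i - e - f) + \sum_{l \neq i} r(D_l) = r(D_i - e - f) + r(D) - r(D_i),
\]
so $r(D-e-f) = r(D) - 1$ iff $r(D_i - e - f) = r(D_i) - 1$, which by Proposition \ref{thm:principalpartfork}(b) applied to $D_i$ means $e, f$ lie in the same part of $\cA_i$, equivalently in the same part of $\cA$.

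This is essentially bookkeeping: the key conceptual points are (i) cyclicity passes to direct summands because circuits cannot cross components, and (ii) the rank characterisation of the principal partition localises to each summand because of additivity. The main thing to be careful about is treating the cross-component case $i \neq j$ separately, since there the rank drop from removing $e$ and $f$ is zero rather than one. Alternatively, one could give a more conceptual proof using Remark \ref{rem:lattice+restriction}: the principal partition is the set of rank-one flats of $(\cM|_D)^*$, and since $(\cM|_D)^* = \bigoplus_i (\cM|_{D_i})^*$ is loop-free, its rank-one flats are exactly the disjoint union of rank-one flats of the summands.
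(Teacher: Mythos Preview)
Your proof is correct. Note that the paper does not actually prove this proposition here---it is quoted from \cite{JNS}---so there is no in-paper argument to compare against directly. Your elementary approach via rank additivity and Proposition~\ref{thm:principalpartfork}(b) is clean and self-contained; the alternative you sketch at the end using Remark~\ref{rem:lattice+restriction} (identifying the principal partition with the rank-one flats of $(\cM|_D)^*$ and using $(\cM|_D)^* = \bigoplus_i (\cM|_{D_i})^*$) is the style the paper adopts for the analogous results on 2-sums and parallel connections (Propositions~\ref{prop:2sumffold} and~\ref{prop:pckfold}), and is presumably closer to how \cite{JNS} argues. Both routes are short here because direct sum is the easiest case; your rank-function argument has the virtue of needing nothing beyond the definitions and Proposition~\ref{thm:principalpartfork}(b).
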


For our applications to $k$-fold circuits in rigidity matroids, we prove analogous results for 2-sum and parallel connection.

\begin{proposition}\label{prop:2sumffold}
    Let $\cM = (E,r)$ be a matroid and $D,D_1,D_2 \subseteq E$ satisfying $e = D_1 \cap D_2$ and $D = (D_1 \cup D_2) - e$ such that $\cM|_D = \cM|_{D_1} \oplus_2  \cM|_{D_2}$  is the 2-sum along $e$.
    %Then:
    \begin{itemize}
        \item[(a)] $D$ is a $k$-fold circuit if and only if each $D_i$ is a $k_i$-fold circuit and $k = k_1+k_2-1$.
        \item[(b)] If $D_i$ is a $k_i$-fold circuit with
    principal partition $\cA_i$ with $e \in A^{i} \in \cA_i$, then the principal partition of $D$ is
    \[
    \cA = (\cA_1 \setminus A^1) \cup (\cA_2 \setminus A^2) \cup \{(A^1 \cup A^2) - e\} \, .
    \]
    \end{itemize}
\end{proposition}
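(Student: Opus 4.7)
My plan is to handle part (a) by combining size/rank bookkeeping with the circuit description (\ref{eq:2sum+circuits}), and to handle part (b) by translating to the dual matroid, where by Remark \ref{rem:lattice+restriction} the principal partition becomes the set of parallel classes (rank-$1$ flats modulo loops).

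For part (a), first note that $|D| = |D_1| + |D_2| - 2$, and, applying Lemma \ref{lem:2summat}(b) to the restrictions $\cM|_{D_i}$, that $r(D) = r(D_1) + r(D_2) - 1$. Subtracting yields $|D| - r(D) = (|D_1| - r(D_1)) + (|D_2| - r(D_2)) - 1$, matching the nullity relation $k = k_1 + k_2 - 1$. For cyclicity, the 2-sum hypothesis that $e$ is neither a loop nor coloop of either $\cM|_{D_i}$ ensures $e$ lies in some circuit of each. Using (\ref{eq:2sum+circuits}), an element $f \in D_1 - e$ lies in a circuit of $\cM|_D$ if and only if it lies in a circuit of $\cM|_{D_1}$: the forward direction is immediate, and for the reverse I handle the case where a circuit $C_1 \ni f$ of $\cM|_{D_1}$ contains $e$ by merging with any circuit $C_2 \subseteq D_2$ containing $e$ to obtain the circuit $(C_1 \cup C_2) - e$ of $\cM|_D$. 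A symmetric argument for $f \in D_2 - e$ completes the cyclic equivalence.

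For part (b), I identify the principal partitions of $D$ and of each $D_i$ with the parallel classes of the duals $\cN := (\cM|_D)^*$ and $\cN_i := (\cM|_{D_i})^*$ via Remark \ref{rem:lattice+restriction}. The self-duality of 2-sums (a standard result of matroid theory; see, e.g., \cite[Proposition 7.1.22]{Oxl}) gives $\cN = \cN_1 \oplus_2 \cN_2$. Two elements are parallel iff they form a 2-element circuit, so (\ref{eq:2sum+circuits}) shows the parallel classes of $\cN_1 \oplus_2 \cN_2$ are exactly (i) the parallel classes of some $\cN_i$ that avoid $e$, and (ii) the single merged class $(A^1 \cup A^2) - e$ arising from size-2 merged circuits $\{x,y\}$; indeed, a merged circuit $(C_1 \cup C_2) - e$ has size $2$ only when $|C_1| = |C_2| = 2$, forcing $C_i = \{e, x_i\}$ with $x_i$ parallel to $e$ in $\cN_i$, i.e.\ $x_i \in A^i - e$. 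Translating back via Remark \ref{rem:lattice+restriction} yields the claimed description of $\cA$, with the merged part omitted when it is empty (i.e., when $A^1 = A^2 = \{e\}$).

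The main technical input is the self-duality of 2-sums, which is standard but external; I would simply cite it. If a self-contained derivation is preferred, one can avoid duality by invoking Proposition \ref{thm:principalpartfork}(b) directly: for each pair $f, g \in D$ one computes $r(D - f - g)$ using the 2-sum rank formula
\[
r_\cM(S) \;=\; r_1(S \cap (D_1 - e)) + r_2(S \cap (D_2 - e)) - \varepsilon,
\]
where $\varepsilon = 1$ if $e \in \cl(S \cap (D_1 - e)) \cap \cl(S \cap (D_2 - e))$ (closures taken in the respective $\cM|_{D_i}$) and $\varepsilon = 0$ otherwise, then case-splits on which side(s) contain $f, g$ and on whether each lies in $A^1$ or $A^2$. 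A short lemma showing that $e \in \cl(D_i - e - h)$ iff $h \notin A^i$ pins down $\varepsilon$ in each case, and the resulting rank values match $r(D) - 1$ in precisely the pairs predicted by the claimed partition.
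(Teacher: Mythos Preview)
Your proof is correct and follows essentially the same strategy as the paper: part (a) via size/rank bookkeeping together with the circuit description \eqref{eq:2sum+circuits}, and part (b) by passing to the dual via Remark~\ref{rem:lattice+restriction}, invoking self-duality of $2$-sums, and then reading off the rank-one flats of the dual $2$-sum. The only difference is in how that last step is carried out: the paper appeals to the description of flats of a $2$-sum (citing Brylawski) to classify rank-one flats directly, while you identify rank-one flats with parallel classes (legitimate here because cyclicity of $D$ and of the $D_i$ makes the duals loopless) and classify them via the $2$-element circuits coming from \eqref{eq:2sum+circuits}. Your route is marginally more self-contained since it avoids importing the flat description, and your explicit remark about the degenerate case $A^1=A^2=\{e\}$ is a nice touch the paper leaves implicit.
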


\begin{proof}
For part (a), $D$ is cyclic if and only if $D_i$ is cyclic follows from \eqref{eq:2sum+circuits}.
The claimed values of $k$ and $k_i$ follows by an application of Lemma \ref{lem:2summat} to  $\cM|_D$,  $\cM|_{D_1}$ and $\cM|_{D_2}$ using the fact that $|D|=|D_1|+|D_2|-2$.

Without loss of generality we assume $\cM = \cM|_D$, and write $\cM_i = \cM|_{D_i}$.
\added{As such $\cM$ and $\cM_i$ have no coloops, and so $\cM^*$ and $\cM^*_i$ have no loops.}
By Remark~\ref{rem:lattice+restriction}, the principal partition of $D$ (resp. $D_i$) is the set of rank one flats of $\cM^*$ (resp. $\cM_i^*$).
\added{Let $A \subseteq D$ be a rank one flat of $\cM^*$, i.e. $A \in \cA$.
As duality commutes with 2-sum (\cite[Proposition 7.1.22(i)]{Oxl}), we have that $A$ is a flat of $\cM^* = \cM_1^* \oplus_2 \cM_2^*$.
It follows from the flat description of 2-sums that either $A \cap (D_i - e)$ is a flat of $\cM^*_i$ for both $i=1,2$, or $(A \cup \{e\}) \cap D_i$ is a flat of $\cM^*_i$ for both $i=1,2$.
%Recall that a set $A \subseteq D$ is a flat of $\cM_1^*\oplus_2 \cM_2^*$ if and only if $A \cap (D_i - e)$ is a flat of $\cM^*_i$ for $i=1,2$, or $(A \cup \{e\}) \cap D_i$ is a flat of $\cM^*_i$ for $i=1,2$.
%If $A$ is rank one and in the former case,
%If $A$ is rank one and in the latter case,
If the latter case holds, then $(A \cup \{e\}) \cap D_i$ is a rank one flat in both $\cM_i^*$ for both $i=1,2$, namely $A^i$, and so $A = (A^1 \cup A^2) -e$.
If the former case holds, as $r_{\cM^*}(A) = 1$, it follows without loss of generality that $A \cap (D_1 - e)$ is a rank one flat in $\cM_1^*$ and $A \cap (D_2 - e)$ is a rank zero flat in $\cM_2^*$.
As $\cM_2^*$ is loopless, it follows that $A \subseteq D_1 - e$ and hence $A \in \cA_1 \setminus A^1$.}
%As the rank one flats of $\cM^*_i$ are the parts of the principal partition of $D_i$, 
This gives exactly the description of the principal partition from part (b).
\end{proof}

\begin{proposition}\label{prop:pckfold}
    Let $\cM = (E,r)$ be a matroid and $D,D_1,D_2 \subseteq E$ satisfying $e = D_1 \cap D_2$ and $D = D_1 \cup D_2$ such that $\cM|_D = P(\cM|_{D_1}, \cM|_{D_2})$  is the parallel connection along $e$.
    %Then:
    \begin{itemize}
        \item[(a)] $D$ is a $k$-fold circuit if and only if each $D_i$ is a $k_i$-fold circuit and $k = k_1+k_2$.
        \item[(b)] If $D_i$ is a $k_i$-fold circuit with principal partition $\cA_i$ with $e \in A^{i} \in \cA_i$, then the principal partition of $D$ is
    \[
    \cA = (\cA_1 \setminus A^1) \cup (\cA_2 \setminus A^2) \cup \{A^1  - e, A^2 - e, e\} \, .
    \]
    \end{itemize}
\end{proposition}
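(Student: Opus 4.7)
The proof will follow the template of Proposition \ref{prop:2sumffold}, but since the dual of a parallel connection (the series connection) was not developed in Section \ref{sec:prelim}, I will work directly with rank computations rather than through duality.

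Part (a) is routine. The circuit description \eqref{eq:parallel+connection+circuits}, together with the assumption that $e$ is in a circuit of each $\cM|_{D_i}$ (since $e$ is neither a loop nor a coloop), shows that $D$ is cyclic in $\cM|_D$ if and only if each $D_i$ is cyclic in $\cM|_{D_i}$. Lemma \ref{lem:2summat}(b) then gives $r(D) = r(D_1) + r(D_2) - 1$, and since $|D| = |D_1| + |D_2| - 1$, we obtain $k = |D| - r(D) = k_1 + k_2$.

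For part (b), I will apply Proposition \ref{thm:principalpartfork}(b), according to which two elements $f, g \in D$ lie in a common part of the principal partition if and only if $r(D - f - g) = r(D) - 1$. The workhorse is the standard rank formula for the parallel connection: for $X \subseteq D$,
\[
r(X) = r_{\cM_1}(X \cap D_1) + r_{\cM_2}(X \cap D_2) - \epsilon(X),
\]
where $\epsilon(X) = 1$ if $e \in X$ or $e \in \cl_{\cM_1}(X \cap D_1) \cap \cl_{\cM_2}(X \cap D_2)$, and $\epsilon(X) = 0$ otherwise. I will compute $r(D - f - g)$ in three cases. When $f, g \in D_i - e$ for some $i$, the formula reduces the equality $r(D - f - g) = r(D) - 1$ to $r_{\cM_i}(D_i - f - g) = r_{\cM_i}(D_i) - 1$, which by Proposition \ref{thm:principalpartfork}(b) applied to $D_i$ is equivalent to $f, g$ lying in a common part of $\cA_i$. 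When $f \in D_1 - e$ and $g \in D_2 - e$, the set $D - f - g$ still contains $e$, so $\epsilon = 1$ and a short calculation (using that each $D_i$ is cyclic) gives $r(D - f - g) = r(D)$, placing $f$ and $g$ in distinct parts. When $f = e$, I claim $r(D - e - g) = r(D)$ for every $g \in D - e$, so that $\{e\}$ is a singleton part.

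Collating the cases, the parts of the principal partition of $D$ that lie inside $D_i - e$ are exactly those of $\cA_i \setminus A^i$ together with $A^i - e$ (when nonempty); no part crosses between $D_1 - e$ and $D_2 - e$; and $\{e\}$ forms its own part. This yields the claimed description. The subtlest step will be the case $f = e$: writing $g \in D_i - e$, the value of $r_{\cM_i}(D_i - e - g)$ and the question of whether $e \in \cl_{\cM_i}(D_i - e - g)$ each depend on whether $g \in A^i$, but these two dependencies compensate in the rank formula so that $r(D - e - g)$ equals $r(D)$ in both sub-cases. This cancellation is precisely what causes the middle part $(A^1 \cup A^2) - e$ appearing in the 2-sum partition of $D - e$ (Proposition \ref{prop:2sumffold}(b)) to split into the three parts $A^1 - e$, $A^2 - e$, $\{e\}$ in the parallel connection partition of $D$.
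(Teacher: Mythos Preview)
Your proposal is correct and genuinely differs from the paper's route. The paper proves part (b) via duality: it uses $\cM^* = S(\cM_1^*,\cM_2^*)$ (the series connection), invokes Brylawski's description of the flats of a series connection, and reads off the rank-one flats of $\cM^*$, which by Remark~\ref{rem:lattice+restriction} are exactly the parts of the principal partition. You instead stay on the primal side, using the rank formula for parallel connection together with Proposition~\ref{thm:principalpartfork}(b), and carry out a three-case analysis on the location of $f,g$. Your case~3 computation is exactly right: when $g\in A^i$ the rank $r_{\cM_i}(D_i-e-g)$ drops by one but $e\notin\cl_{\cM_i}(D_i-e-g)$ forces $\epsilon=0$; when $g\notin A^i$ the rank stays but $\epsilon=1$; either way $r(D-e-g)=r(D)$.

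The trade-off: the paper's duality argument is shorter and parallels the proof of Proposition~\ref{prop:2sumffold} cleanly, but imports a flat description for series connection that was not developed in Section~\ref{sec:prelim}. Your approach is more self-contained relative to what the paper actually states, at the cost of a longer case analysis and reliance on the parallel-connection rank formula (standard, but also not quoted in the preliminaries). One small point: you should note that your argument implicitly assumes $A^i-e\neq\emptyset$ so that it is a genuine part; this is harmless here since the proposition's statement already lists $A^i-e$ as a part, but it is worth a sentence.
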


\begin{proof}
    Part (a) follows by an application of Lemma \ref{lem:2summat} to  $\cM|_D$,  $\cM|_{D_1}$ and $\cM|_{D_2}$ using the fact that $|D|=|D_1|+|D_2|-1$.

    Again, without loss of generality we assume $\cM = \cM|_D$, and write $\cM_i = \cM|_{D_i}$.
    Similar to Proposition~\ref{prop:2sumffold}, we show part (b) by taking the dual of $\cM$ and determining its rank one flats.
    By \cite[Proposition 7.1.14]{Oxl}, $\cM^* = S(\cM_1^*, \cM_2^*)$ is the series connection of $\cM_1^*$ and $\cM_2^*$.
    The flats of the series connection are precisely $A_1 \cup A_2$ where $A_i$ is a flat of $\cM^*_i$ that does not contain $e$, and $E - (B_1 \Delta B_2)$ where $E_i \setminus B_i$ is a flat in $\cM_i$~\cite[Proposition 7.6.8]{Brylawski}.
    If $A$ is a rank one flat in the former case, it follows that $A \subseteq D_i - e$ a rank one flat in $\cM_i^*$ for some $i=1,2$.
    If $A = E \setminus (B_1 \Delta B_2)$ is a rank one flat in the latter case, then there are precisely three cases how $A$ can arise.
    If $B_1 = E_1 - A^1$ and $B_2 = E_2$, then $A = A^1 - e$; a symmetric argument gives the case where $A = A^2 - e$.
    Finally if $B_1 = E_1$ and $B_2 = E_2$, then $A = E \setminus (E_1 \Delta E_2) = e$ is the final rank one flat.
    As the rank one flats of $\cM^*_i$ are the parts of the principal partition of $D_i$, this gives exactly the description of the principal partition from part (b).
\end{proof}

\section{\boldmath $k$-fold circuits in rigidity matroids}
\label{sec:kfold_in_rigidity_matroids}

In this section we will describe how $k$-fold circuits in rigidity matroids can be constructed using the graphical direct sum, parallel connection and 2-sum operations and obtain sufficient conditions for them to be balanced.
We begin our study by making some simple observations on $k$-fold circuits that generalise properties of circuits in rigidity matroids.

\begin{lemma}\label{lem:maxk}
Let $G=(V,E)$ be a $k$-fold $\cR_d$-circuit.
Then $|E|\leq d|V|-{d+1\choose 2}+k$ with equality if and only if $G$ is $\cR_d$-rigid.
    Moreover, the minimum degree $\delta(G)$ of $G$ satisfies
    \[
    d+1 \leq \delta(G) \leq 2d + \frac{2k - d(d+1)}{|V|} \, .
    \]
\end{lemma}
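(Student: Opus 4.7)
The plan is to dispatch the three claims separately, leveraging Lemma~\ref{lem:max}.

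The edge count follows immediately: by definition of a $k$-fold $\cR_d$-circuit, $r_d(G) = |E| - k$, and Maxwell's bound (the ingredient underlying Lemma~\ref{lem:max}) gives $r_d(G) \leq d|V|-\binom{d+1}{2}$, with equality iff $G$ is $\cR_d$-rigid. Rearranging gives $|E| \leq d|V|-\binom{d+1}{2}+k$ with the stated equality condition. Note this requires $|V|\geq d+2$, which holds since the smallest $\cR_d$-circuit is $K_{d+2}$ and a cyclic set must contain a circuit.

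For $\delta(G)\geq d+1$: since a $k$-fold circuit is cyclic, every edge of $G$ lies in some $\cR_d$-circuit $C\subseteq E(G)$, and hence every non-isolated vertex lies in some such $C$. The claim thus reduces to the folklore fact that each vertex of an $\cR_d$-circuit has degree at least $d+1$ in that circuit. I would justify this quickly: if $v$ had degree $d' \leq d$ in $C$, then restricting the rigidity matrix $R(C,p)$ to the $d$ columns indexed by $v$, only the $d'$ rows corresponding to edges at $v$ have nonzero entries, and for generic $p$ the vectors $p(v)-p(v_i)$ are linearly independent in $\RR^d$. This forces every linear dependence among the rows of $R(C,p)$ to vanish on all edges at $v$, contradicting the fact that the unique (up to scaling) dependence in a circuit is supported on every edge. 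Since the degree of $v$ in any subgraph of $G$ is at most its degree in $G$, we conclude $\deg_G(v) \geq \deg_C(v) \geq d+1$.

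For the upper bound on $\delta(G)$: a direct averaging argument gives $\delta(G)\cdot|V| \leq 2|E| \leq 2d|V| - d(d+1) + 2k$ using the first part, and dividing by $|V|$ yields the claimed inequality. The only conceptual obstacle is the folklore claim on vertex degrees in $\cR_d$-circuits, which I would include explicitly via the short generic-position argument above since the excerpt does not appear to state it; everything else is routine.
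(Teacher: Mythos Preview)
Your proposal is correct and follows essentially the same route as the paper's proof: derive the edge bound from $r_d(G)=|E|-k$ together with the Maxwell rank bound, obtain $\delta(G)\geq d+1$ from the fact that every edge of a cyclic set lies in an $\cR_d$-circuit and such circuits have minimum degree at least $d+1$, and finish the upper bound by averaging $2|E|/|V|$. The only difference is cosmetic: the paper simply cites the minimum-degree fact for $\cR_d$-circuits from the literature, whereas you supply the short generic-position argument showing that a vertex of degree at most $d$ would force any equilibrium stress to vanish on its incident edges.
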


\begin{proof}
The first claim follows from Lemma~\ref{lem:max} and the definition of $k$-fold circuits.
 
    The lower bound on $\delta(G)$ follows from the well known result that $\cR_d$-circuits have minimum degree at least $d+1$, see for example~\cite[Lemma 2.4]{JJ}, and the fact that every edge of a $k$-fold circuit belongs to an $\cR_d$-circuit. 
    The upper bound now follows from the first part of the lemma since
    \[
    \delta(G) \leq \frac{2|E|}{|V|} \leq 2d + \frac{2}{|V|}\left(k - {d+1 \choose 2}\right) \leq 2d + \frac{2k - d(d+1)}{|V|} \, .
    \]
\end{proof}

\subsection{Constructions}

We will consider graphical versions of the matroid 2-sum and parallel connection operations. 
Let $G_1=(V_1,E_1)$ and $G_2=(V_2,E_2)$ be two graphs with $G_1\cap G_2\cong K_2$ and $E_1\cap E_2=\{e\}$.
We say that:
\begin{enumerate}
    \item $G = (V,E)$ is the {\em graphical parallel connection of $G_1,G_2$ along the edge $e$} if $G=G_1\cup G_2$;
    \item $G = (V,E)$ is the {\em graphical $2$-sum of $G_1,G_2$ along the edge $e$} if $G=(G_1\cup G_2)-e$.
\end{enumerate}
We make no additional assumptions on $e$ (unlike the matroid versions in which $e$ cannot be a loop or a coloop).

The following result of Grasegger et al.~\cite{GGJN} shows that $\cR_d$-circuits are closed under graphical $2$-sums.
It was previously proved for $d=2$ in \cite{BJ} and for $d=3$ in \cite{Tay}. 

\begin{lemma}
\label{lem:2-sum}
Suppose that $G$ is the graphical $2$-sum of two graphs $G_1$ and $G_2$ along an edge $e$. Then $G$ is an $\cR_d$-circuit if and only if $G_1$ and $G_2$ are both $\cR_{d}$-circuits. 
\end{lemma}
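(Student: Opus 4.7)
The plan is to prove the biconditional by first establishing the rank identity
\[
r_d(G_1\cup G_2)=r_d(G_1)+r_d(G_2)-1,
\]
and then deducing both directions algebraically from it.

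For the rank identity, I would fix a generic realisation $p$ of $V_1\cup V_2$ in $\RR^d$ and consider the row spans $W_i\subseteq \RR^{d|V_1\cup V_2|}$ of the rigidity matrices $R(G_i,p|_{V_i})$, each extended by zeros on the columns indexed by $V(G)\setminus V_i$. Since $r_d(G_1\cup G_2)=\dim(W_1+W_2)=r_d(G_1)+r_d(G_2)-\dim(W_1\cap W_2)$, the identity reduces to the dimension count $\dim(W_1\cap W_2)=1$. A vector in $W_1\cap W_2$ must be supported on the $2d$ columns indexed by $V_1\cap V_2=\{u,v\}$, where $e=uv$. The key observation is that every infinitesimal motion $w\in\ker R(G_i,p|_{V_i})$ satisfies the linear constraint $(w_u-w_v)\cdot(p_u-p_v)=0$ imposed by the edge $e$, so for generic $p$ the projection of $\ker R(G_i,p|_{V_i})$ onto the $2d$ coordinates indexed by $V_1\cap V_2$ is exactly the $(2d-1)$-dimensional hyperplane cut out by this constraint. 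Rank-nullity then yields $\dim\{v\in W_i : v|_{V_1\setminus V_2}=0\}=1$, and since $R_e$ is a nonzero element of this space lying in both $W_1$ and $W_2$, we get $W_1\cap W_2=\langle R_e\rangle$ of dimension $1$.

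For the forward direction, assume $G_1$ and $G_2$ are $\cR_d$-circuits and let $\omega^{(i)}$ denote the unique (up to scalar) stress of $R(G_i,p|_{V_i})$; since $G_i$ is a circuit, $\omega^{(i)}_f\neq 0$ for every $f\in E_i$. After rescaling so that $\omega^{(1)}_e=\omega^{(2)}_e$, defining $\mu\in\RR^{E(G)}$ by $\mu_f=\omega^{(1)}_f$ on $E_1-e$ and $\mu_f=-\omega^{(2)}_f$ on $E_2-e$ makes the $e$-contributions cancel, so $\mu$ is a stress of $R(G,p)$ with every entry nonzero. The rank identity together with $r_d(G_i)=|E_i|-1$ gives $r_d(G_1\cup G_2)=|E(G)|-1$, and because $e\in\cl(E_1-e)\subseteq \cl(E(G))$ we have $r_d(G)=r_d(G_1\cup G_2)=|E(G)|-1$. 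Thus $\mu$ spans the $1$-dimensional stress space of $R(G,p)$ and has full support, so $G$ is an $\cR_d$-circuit. Structurally this is precisely the specialisation $k_1=k_2=1$ of Proposition~\ref{prop:2sumffold}.

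For the reverse direction, assume $G$ is an $\cR_d$-circuit. Combining $r_d(G)=|E(G)|-1$ with the rank identity and the fact that $r_d(G_1\cup G_2)\in\{r_d(G),r_d(G)+1\}$, a short case analysis on the possible splits of $(r_d(G_1),r_d(G_2))$ shows we must have $r_d(G_i)=|E_i|-1$ for both $i$, since in every other split any stress of $G_1\cup G_2$ vanishing at $e$ is forced to be supported inside $E_i-e$ for a single $i$, producing a stress of $G$ on a proper subset of $E(G)$ and contradicting the circuit property. Letting $C_i\subseteq E_i$ be the support of the unique stress of $G_i$, if $e\notin C_i$ for some $i$ then that stress extends directly to a stress of $G$ supported in $E_i-e\subsetneq E(G)$, again a contradiction. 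Hence $e\in C_1\cap C_2$, and the combined-stress construction yields a stress of $G$ with support $(C_1\cup C_2)-e$; uniqueness of $G$'s stress forces this to equal $E(G)$, so $C_i=E_i$ and each $G_i$ is an $\cR_d$-circuit. The main obstacle throughout is the dimension count $\dim(W_1\cap W_2)=1$, which hinges on the single universal constraint imposed by the edge $e$ on infinitesimal motions at $u$ and $v$; the rest of the argument then follows rather mechanically.
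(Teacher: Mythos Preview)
The paper does not prove this lemma; it is quoted from Grasegger--Guler--Jackson--Nixon~\cite{GGJN} (with earlier special cases in \cite{BJ,Tay}), so there is no in-paper argument to compare against. Your approach via the rank identity $r_d(G_1\cup G_2)=r_d(G_1)+r_d(G_2)-1$ is the natural direct one and is essentially correct. The claim $\dim(W_1\cap W_2)=1$ is sound: the projection of $\ker R(G_i,p)$ onto the $\{u,v\}$-coordinates lies in the hyperplane because of the row for $e$, and equals it because the restrictions of the infinitesimal isometries of $\RR^d$ to any two distinct points already fill that $(2d-1)$-dimensional space; the orthogonality/rank--nullity step then gives the dimension count you need. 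The forward direction, combining the two circuit stresses after normalising at $e$, is fine.

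The reverse direction is where you are vaguest. The phrase ``a short case analysis'' hides the work, and the stated justification (``any stress of $G_1\cup G_2$ vanishing at $e$ is forced to be supported inside $E_i-e$ for a single $i$'') is not quite the right lever. A cleaner route: since each $E_i-e$ is a proper subset of the circuit $E(G)$ it is independent, so $r_d(G_i)\ge |E_i|-1$; combined with the rank identity and $r_d(G+e)\le r_d(G)+1=|E(G)|$, the only alternative to $r_d(G_i)=|E_i|-1$ for both $i$ is (up to symmetry) $G_1$ independent, $r_d(G_2)=|E_2|-1$, and $r_d(G+e)=r_d(G)+1$. But then the unique circuit $C_2\subseteq E_2$ must contain $e$ (otherwise $C_2\subseteq E_2-e\subsetneq E(G)$ is dependent), giving $e\in\cl(E_2-e)\subseteq\cl(E(G))$ and contradicting $r_d(G+e)>r_d(G)$. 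After that, your combined-stress argument forces $C_i=E_i$ cleanly. This is a tightening of exposition rather than a genuine gap.
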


We will obtain analogous results for 2-sums and parallel connections of $k$-fold circuits. Our main tool is the following theorem which shows that, for rigidity matroids, the graphical and matroidal operations of both parallel connection and 2-sum commute. The theorem follows easily from results of Servatius and Servatius \cite{Ser}.

\begin{theorem}\label{thm:rigidity+2sum+pc}
    Let $G_1 = (V_1, E_1)$ and $G_2 = (V_2, E_2)$ be two graphs with $E_1 \cap E_2=\{e\}$ such that $e$ is not a coloop in either $\cR_d(G_1)$ or $\cR_d(G_2)$.
    If $G$ is the graphical parallel connection of $G_1$ and $G_2$ along $e$, then
        \[
        \cR_d(G) = P(\cR_d(G_1), \cR_d(G_2)) \, ,
        \]
    Moreover, if $G'$ is the graphical 2-sum of $G_1$ and $G_2$ along $e$, then
        \[
        \cR_d(G') = \cR_d(G_1) \oplus_2 \cR_d(G_2) \, .
        \]
\end{theorem}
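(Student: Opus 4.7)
My plan is to reduce both statements to a single one about parallel connection and then match circuit families. The identity $\cM_1 \oplus_2 \cM_2 = P(\cM_1, \cM_2) \setminus e$ recorded in the excerpt, together with the obvious graphical identity that the 2-sum of $G_1$ and $G_2$ is the parallel connection minus $e$, means that once I have the parallel connection identity $\cR_d(G) = P(\cR_d(G_1), \cR_d(G_2))$, the 2-sum statement follows by deleting $e$ from both sides (using that deletion of an edge from a rigidity matroid agrees with the rigidity matroid of the edge-deleted graph). The remainder of the plan therefore focuses on the parallel connection, and since both matroids in question have ground set $E_1 \cup E_2$, I will verify that their circuit families coincide.

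A preliminary fact I plan to use throughout is the restriction identity $\cR_d(G)|_{E_i} = \cR_d(G_i)$ for each $i$. This follows because a generic realisation of $G$ restricts to a generic realisation of $G_i$, and the rows of $R(G_i, p|_{V_i})$ appear, after zero-padding the columns indexed by $V(G) \setminus V_i$, as the rows of $R(G,p)$ indexed by $E_i$. In particular, for $F \subseteq E_i$, being an $\cR_d(G)$-circuit is the same as being an $\cR_d(G_i)$-circuit.

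For the forward inclusion I would let $C$ be a circuit of $P(\cR_d(G_1), \cR_d(G_2))$. By (\ref{eq:parallel+connection+circuits}), either $C \in \cC(\cR_d(G_i))$ for some $i$, in which case the restriction identity makes $C$ an $\cR_d(G)$-circuit, or $C = (C_1 \cup C_2) - e$ with $C_i \in \cC(\cR_d(G_i))$ and $e \in C_i$. In the latter case the edge-induced subgraphs carrying $C_1$ and $C_2$ are $\cR_d$-circuit graphs meeting exactly in the edge $e$, so Lemma~\ref{lem:2-sum} identifies their graphical 2-sum as an $\cR_d$-circuit; its edge set is $C$, and a final application of the restriction identity shows that $C$ is an $\cR_d(G)$-circuit.

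The reverse inclusion is the substantive step, and I expect it to be the main obstacle. If an $\cR_d(G)$-circuit $C$ is contained in some $E_i$, restriction immediately exhibits $C$ as an $\cR_d(G_i)$-circuit, hence as a circuit of $P$. Otherwise $C$ meets both $E_1 - e$ and $E_2 - e$, and the two shared vertices of $G_1$ and $G_2$ form a 2-separation of the edge-induced subgraph on $C$; I then need to show that $C$ decomposes as $(C_1 \cup C_2) - e$ with $C_i \in \cC(\cR_d(G_i))$ and $e \in C_i$. This $\cR_d$-circuit structure result at a 2-separation is precisely the content of the results of Servatius and Servatius, and pinning down their exact statement and verifying its hypotheses here (in particular the non-coloop condition on $e$, which is what ensures the parallel connection is a well-defined matroid operation and that each prospective $C_i$ genuinely uses $e$) is the delicate part. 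Once the decomposition is in hand, the two circuit families agree and both claimed identities follow.
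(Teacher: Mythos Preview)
Your proposal is correct and follows essentially the same approach as the paper: match circuit families for the parallel connection, then obtain the 2-sum statement by deleting $e$ (the paper happens to do these in the reverse order, citing the 2-sum identity directly from \cite{Ser} first). For the reverse inclusion you flag as the delicate part, the paper gives a short self-contained argument rather than deferring to a citation: if a circuit $C$ of $\cR_d(G)$ meets both $E_1 - e$ and $E_2 - e$, one first rules out $e \in C$ using Lemma~\ref{lem:intbridge}\eqref{it:intbridge:indep} (since $C \cap E_1$ and $C \cap E_2$ would then intersect in the $\cR_d$-rigid graph $K_2$, forcing one of them to be dependent and contradicting minimality of $C$), and then Lemma~\ref{lem:2-sum} applied to the graphical 2-sum of $(C \cap E_1) + e$ and $(C \cap E_2) + e$ yields the desired decomposition $C = (C_1 \cup C_2) - e$ with each $C_i$ an $\cR_d$-circuit through $e$.
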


\begin{proof}
    The claim for 2-sums is given in \cite[Corollary 1]{Ser} since $e$ can never be a loop in the rigidity matroid of a generic framework.

    For the parallel connection claim, we consider the circuits of $\cR_d(G)$.
    Clearly any circuits of $\cR_d(G_1)$ and $\cR_d(G_2)$ are also circuits of $\cR_d(G)$.
    Moreover, \cite[Theorem 1]{Ser} shows that $(C_1 \cup C_2) -e$ is also a circuit of $\cR_d(G)$ for any $e \in C_i \in \cC(\cR_d(G_i))$.
    This implies that $\cC(P(\cR_d(G_1), \cR_d(G_2))) \subseteq \cC(\cR_d(G))$.

    To show the other containment, let $C \in \cC(\cR_d(G))$.
    If $C \subseteq G_i$ for either $i=1,2$, then this is just a circuit of $\cR_d(G_i)$.
    Hence we write $C = C_1 \cup C_2$ where $C_i = C \cap G_i$ and assume $C_i \neq \emptyset$ or $\{e\}$.
    Note that $e \notin C$, else Lemma~\ref{lem:intbridge}\eqref{it:intbridge:indep} would imply that either $C_1$ or $C_2$ were dependent, contradicting that $C$ is a circuit.
    Hence $C$ can be written as the graphical 2-sum of $C_1 + e$ and $C_2 + e$.
    It follows from Lemma~\ref{lem:2-sum} that this can only occur if $C_1 +e$ and $C_2+e$ were circuits in $\cR_d(G_1)$ and $\cR_d(G_2)$ respectively.
    As such $C \in \cC(P(\cR_d(G_1), \cR_d(G_2))$, giving the designed result.
\end{proof}

Combining Theorem~\ref{thm:rigidity+2sum+pc} with Propositions~\ref{prop:2sumffold} and \ref{prop:pckfold}, we can characterise the behaviour of $k$-fold $\cR_d$-circuits under graphical 2-sums and parallel connection.
These propositions also describe how the principal partitions change, but we will not repeat this in the graphical lemmas.

\begin{lemma}\label{lem:k-sum}
Let $G$ be the graphical 2-sum of two graphs $G_1$ and $G_2$ along an edge $e$.
Suppose that $e$ is not a coloop in either $\cR_d(G_1)$ or $\cR_d(G_2)$.
Then $G$ is a $k$-fold $\cR_d$-circuit if and only if $G_1$ is a $k_1$-fold $\cR_d$-circuit and $G_2$ is a $k_2$-fold $\cR_d$-circuit such that $k_1+k_2=k+1$.
\end{lemma}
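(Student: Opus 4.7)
The plan is to reduce the graphical statement to its matroidal analogue by combining the two preceding results in a single step. Specifically, Theorem \ref{thm:rigidity+2sum+pc} supplies the crucial identification $\cR_d(G) = \cR_d(G_1) \oplus_2 \cR_d(G_2)$ between the graphical and matroidal 2-sum constructions, after which Proposition \ref{prop:2sumffold}(a) applies directly to the edge sets.

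In more detail, I would view $G_1$, $G_2$ and $G$ as subgraphs of the complete graph on $V(G_1) \cup V(G_2)$, so that $E(G_1) \cap E(G_2) = \{e\}$ and $E(G) = (E(G_1) \cup E(G_2)) - e$ as subsets of a common ground set. Since $e$ is not a coloop of $\cR_d(G_1)$ or $\cR_d(G_2)$ by hypothesis, and edges of a generic rigidity matroid are never loops, Theorem \ref{thm:rigidity+2sum+pc} applies and yields $\cR_d(G) = \cR_d(G_1) \oplus_2 \cR_d(G_2)$. Taking $D = E(G)$ and $D_i = E(G_i)$ in Proposition \ref{prop:2sumffold}(a) then gives that $E(G)$ is a $k$-fold circuit of $\cR_d$ if and only if each $E(G_i)$ is a $k_i$-fold circuit of $\cR_d$ with $k = k_1 + k_2 - 1$, i.e.\ $k_1 + k_2 = k+1$. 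Translating back to graph terminology is exactly the statement of the lemma.

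The main (and essentially only) subtlety is verifying the hypotheses of the invoked results. The non-coloop condition in the lemma is precisely what Theorem \ref{thm:rigidity+2sum+pc} requires, and it also forces $|E(G_i)| \geq 2$ (otherwise $G_i \cong K_2$ and the single edge $e$ would be a coloop of $\cR_d(G_i)$), so Proposition \ref{prop:2sumffold}(a) applies without further conditions. Once this bookkeeping is in place the proof is a direct specialisation and no genuine obstacle remains; indeed, one could equally well derive the analogous statement for the graphical parallel connection by using the second half of Theorem \ref{thm:rigidity+2sum+pc} together with Proposition \ref{prop:pckfold}(a).
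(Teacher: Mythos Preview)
Your proposal is correct and follows exactly the route the paper takes: the lemma is stated immediately after Theorem~\ref{thm:rigidity+2sum+pc} as a direct consequence of combining that theorem with Proposition~\ref{prop:2sumffold}, and no further proof is given. Your additional remarks on checking the hypotheses are accurate bookkeeping but not needed beyond what the paper already assumes.
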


\begin{lemma}\label{lem:k-pc}
Let $G$ be the graphical parallel connection of two graphs $G_1$ and $G_2$ along an edge $e$.
Suppose that $e$ is not a coloop in either $\cR_d(G_1)$ or $\cR_d(G_2)$.
Then $G$ is a $k$-fold $\cR_d$-circuit if and only if $G_1$ is a $k_1$-fold $\cR_d$-circuit and $G_2$ is a $k_2$-fold $\cR_d$-circuit such that $k_1+k_2=k$.
\end{lemma}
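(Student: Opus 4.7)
The plan is to derive this lemma as a near-immediate consequence of the machinery already in place, paralleling the strategy used for Lemma~\ref{lem:k-sum}. Specifically, I would combine the equality of matroids $\cR_d(G) = P(\cR_d(G_1), \cR_d(G_2))$ from Theorem~\ref{thm:rigidity+2sum+pc} with the abstract matroid statement in Proposition~\ref{prop:pckfold}(a) applied to the restriction $\cR_d(G) = \cR_d(G)|_{E(G)}$.

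First I would verify that the hypotheses of Theorem~\ref{thm:rigidity+2sum+pc} are met: by assumption $E(G_1) \cap E(G_2) = \{e\}$ and $e$ is not a coloop in either $\cR_d(G_i)$, while $e$ can never be a loop in a rigidity matroid of a generic framework. Hence the theorem gives
\[
\cR_d(G) = P(\cR_d(G_1), \cR_d(G_2)) \, ,
\]
so writing $D = E(G)$ and $D_i = E(G_i)$ we have $\cR_d(G)|_D = P(\cR_d(G)|_{D_1}, \cR_d(G)|_{D_2})$ as a parallel connection along $e$ inside the ambient matroid.

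Next I would directly invoke Proposition~\ref{prop:pckfold}(a): under exactly this setup, $D$ is a $k$-fold circuit of the ambient matroid if and only if each $D_i$ is a $k_i$-fold circuit with $k = k_1 + k_2$. Translating back into graphical language gives precisely the statement of the lemma.

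There is essentially no obstacle here; the only routine check is to make sure the hypothesis of Proposition~\ref{prop:pckfold} (that $e$ be neither a loop nor coloop, which is built into the definition of parallel connection it uses) is inherited from the hypotheses of the present lemma together with the loopless nature of generic rigidity matroids. Once this bookkeeping is done, the characterisation of principal partitions in Proposition~\ref{prop:pckfold}(b) can additionally be read off verbatim, should it be needed in the sequel, although the lemma as stated only records part~(a).
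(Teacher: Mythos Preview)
Your proposal is correct and matches the paper's approach exactly: the paper does not even write out a separate proof for this lemma, but simply states it as an immediate consequence of combining Theorem~\ref{thm:rigidity+2sum+pc} with Proposition~\ref{prop:pckfold}, precisely as you do.
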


We cannot remove the hypothesis that $e$ is not a coloop in $\cR_d(G_i)$ for both $i=1,2$  from either Lemma \ref{lem:k-sum} or Lemma \ref{lem:k-pc}.
As an example, Figure~\ref{fig:2sum+counterexample} shows we can obtain a trivial double $\cR_2$-circuit by taking the graphical 2-sum of two graphs which are not $\cR_2$-circuits.
Similar examples can be constructed for parallel connection.

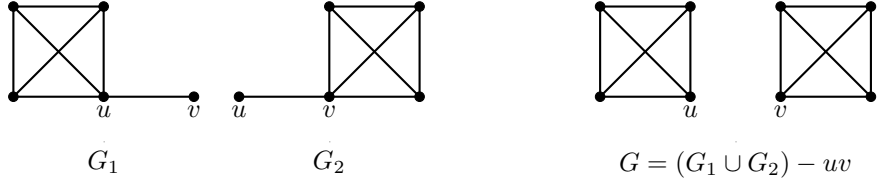
\begin{figure}[h]
\begin{center}
\begin{tikzpicture}[scale=0.6]

\filldraw (0,0) circle (3pt)node[anchor=north]{};
\filldraw (2,0) circle (3pt)node[anchor=north]{$u$};
\filldraw (0,2) circle (3pt)node[anchor=north]{};
\filldraw (2,2) circle (3pt)node[anchor=north]{};
\filldraw (4,0) circle (3pt)node[anchor=north]{$v$};

\filldraw (7,0) circle (3pt)node[anchor=north]{$v$};
\filldraw (9,0) circle (3pt)node[anchor=north]{};
\filldraw (7,2) circle (3pt)node[anchor=north]{};
\filldraw (9,2) circle (3pt)node[anchor=north]{};
\filldraw (5,0) circle (3pt)node[anchor=north]{$u$};

\draw[black,thick]
(0,0) -- (2,0) -- (0,2) -- (2,2) -- (0,0) -- (0,2);

\draw[black,thick]
(4,0) -- (2,0) -- (2,2);

\draw[black,thick]
(9,0) -- (7,2) -- (9,2) -- (7,0) -- (9,0) -- (9,2);

\draw[black,thick]
(5,0) -- (7,0) -- (7,2);

\filldraw (2,-1) circle (0pt)node[anchor=north]{$G_1$};
\filldraw (7,-1) circle (0pt)node[anchor=north]{$G_2$};

%%%%%%

\filldraw (13,0) circle (3pt)node[anchor=north]{};
\filldraw (15,0) circle (3pt)node[anchor=north]{$u$};
\filldraw (13,2) circle (3pt)node[anchor=north]{};
\filldraw (15,2) circle (3pt)node[anchor=north]{};

\filldraw (17,0) circle (3pt)node[anchor=north]{$v$};
\filldraw (19,0) circle (3pt)node[anchor=north]{};
\filldraw (17,2) circle (3pt)node[anchor=north]{};
\filldraw (19,2) circle (3pt)node[anchor=north]{};

\draw[black,thick]
(13,0) -- (15,0) -- (13,2) -- (15,2) -- (13,0) -- (13,2);

\draw[black,thick]
(15,0) -- (15,2);

\draw[black,thick]
(17,0) -- (19,0) -- (17,2) -- (19,2) -- (17,0) -- (17,2);

\draw[black,thick]
(19,0) -- (19,2);

\filldraw (16,-1) circle (0pt)node[anchor=north]{$G = (G_1 \cup G_2) - uv$};
\end{tikzpicture}
\end{center}
\caption{A trivial double $\cR_2$-circuit  arising as a 2-sum where neither graph is a double $\cR_2$-circuit.}
\label{fig:2sum+counterexample}
\end{figure}

\begin{example}\label{ex:db}
    The double banana is a well known example of a flexible $\cR_3$-circuit, obtained as the graphical $2$-sum of two copies of $K_5$ along a common edge $e$. 
    For $d\geq 3$, it can be generalised to the graph $B_{d,d-1}$, defined by letting $B_{d,d-1} = (G_1 \cup G_2)-e$ where $G_i\cong K_{d+2}, G_1 \cap G_2\cong K_{d-1}$ and $e\in E(G_1 \cap G_2)$.
    From \cite[Lemma 11]{GGJN} and its preceding discussion, it follows that $B_{d,d-1}$ is a flexible $\cR_d$-circuit with $r_d(B_{d,d-1})=d(d+5)-{d+1\choose 2}-1$.
    Note that when $d=3$, there is only one choice of edge $e \in E(G_1 \cap G_2)$ and so we recover the double banana $B_{3,2}$.
    
    Define $\overline B_{d,d-1}$ to be the graph obtained from $B_{d,d-1}$ by adding back the edge $e$.
    Then $\cl_d (B_{d,d-1})=\overline B_{d,d-1}$, 
    and hence $\overline B_{d,d-1}$ is a flexible double $\cR_d$-circuit.
    We define the triple banana $B^{(3)}_{d,d-1}$ to be the graphical $2$-sum of $\overline B_{d,d-1}$ and $K_{d+2}$ again along $e$.
    By Lemma~\ref{lem:k-sum}, $B^{(3)}_{d,d-1}$ is a double $\cR_d$-circuit.
    Iterating this process, we get that the $(k+1)$-tuple banana $B^{(k+1)}_{d,d-1}$ is a $k$-fold $\cR_d$-circuit.
    The $d=3$ case is displayed in Figure~\ref{fig:k+banana}.
\end{example}

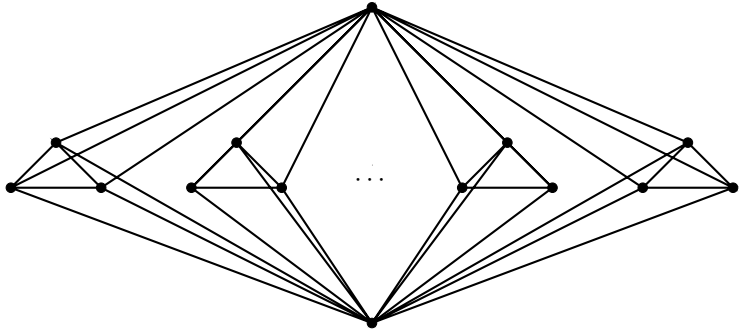
\begin{figure}[h]
\begin{center}
\begin{tikzpicture}[scale=0.6]

\filldraw (8,4) circle (3pt)node[anchor=north]{};
\filldraw (8,-3) circle (3pt)node[anchor=north]{};

\filldraw (0,0) circle (3pt)node[anchor=north]{};
\filldraw (2,0) circle (3pt)node[anchor=north]{};
\filldraw (1,1) circle (3pt)node[anchor=north]{};

\filldraw (4,0) circle (3pt)node[anchor=north]{};
\filldraw (6,0) circle (3pt)node[anchor=north]{};
\filldraw (5,1) circle (3pt)node[anchor=north]{};

\filldraw (8,0.5) circle (0pt)node[anchor=north]{$\cdots$};

\filldraw (10,0) circle (3pt)node[anchor=north]{};
\filldraw (12,0) circle (3pt)node[anchor=north]{};
\filldraw (11,1) circle (3pt)node[anchor=north]{};

\filldraw (14,0) circle (3pt)node[anchor=north]{};
\filldraw (16,0) circle (3pt)node[anchor=north]{};
\filldraw (15,1) circle (3pt)node[anchor=north]{};

\draw[black,thick]
(8,4) -- (0,0) -- (2,0) -- (8,-3) -- (0,0) -- (1,1) -- (8,4) -- (2,0) -- (1,1) -- (8,-3);

\draw[black,thick]
(8,4) -- (4,0) -- (6,0) -- (8,-3) -- (4,0) -- (5,1) -- (8,4) -- (6,0) -- (5,1) -- (8,-3);

\draw[black,thick]
(8,4) -- (10,0) -- (12,0) -- (8,-3) -- (10,0) -- (11,1) -- (8,4) -- (12,0) -- (11,1) -- (8,-3);

\draw[black,thick]
(8,4) -- (14,0) -- (16,0) -- (8,-3) -- (14,0) -- (15,1) -- (8,4) -- (16,0) -- (15,1) -- (8,-3);
\end{tikzpicture}
\end{center}
\caption{The $(k+1)$-tuple banana $B_{3,2}^{(k+1)}$ from Example~\ref{ex:db}, obtained as an iterated graphical 2-sum of $k+1$ copies of $K_5$.
It is a flexible $k$-fold $\cR_3$-circuit.}
\label{fig:k+banana}
\end{figure}

\subsection{Balanced $k$-fold circuits in rigidity matroids}
\label{sec:balancedrigid}

Recall that a matroid $\cM$ satisfies the $k$-fold circuit property if all its $k$-fold circuits are balanced. It
%For rigidity matroids, it 
is easy to construct examples of rigidity matroids \added{$\cR_d(G)$} where the double circuit property fails if we %allow ourselves to 
consider the rigidity matroid of an arbitrary graph \added{$G$}. However, if $\cM=\added{\cR_d(K_n)}$, then the double circuit property holds for both $d=1,2$:
the former was shown by Dress and Lov\'asz~\cite{DL} while the latter was shown by Makai \cite{Mak}.
It was also shown in~\cite{JNS} that \added{$\cR_1(K_n)$} satisfies the $k$-fold circuit property for all $k \geq  1$.
\added{Our next result shows that for fixed $d \geq 4$ and $k \geq 2$, $\cR_d(K_n)$ does not satisfy the $k$-fold circuit property for sufficiently large $n$}.

\begin{theorem}\label{thm:K_67+dcp}
\added{Fix $d\geq 4$ and $k\geq 2$. Then $\cR_d(K_n)$ does not satisfy the $k$-fold circuit property for any $n \geq k(d+2) + 1$.}
 %$\cR_d$ does not satisfy the $k$-fold circuit property for any $d\geq 4$ and any $k\geq 2$.   
\end{theorem}

\begin{proof}
We first show that \added{$\cR_d(K_n)$} does not satisfy the double circuit property \added{when $n \geq 2d+5$} by considering the graph $K_{d+2,d+3}$.
It contains $d+3$ copies of $K_{d+2,d+2}$, which we label $G_1, \dots, G_{d+3}$, each of which is an $\cR_d$-circuit for all $d\geq 3$ and it is $\cR_d$-flexible for all $d\geq 4$, see \cite[Theorem 5.2.1]{GSS}.
We will show that $K_{d+2,d+3}$ is a double circuit by proving that $r_d(K_{d+2,d+3}) = |E(K_{d+2,d+3})| - 2$.
As it is the union of two circuits, we have that $r_d(K_{d+2,d+3}) \leq |E(K_{d+2,d+3})| - 2$.

% % REF NEW PROOF
% \added{For the converse inequality, we first note that $K_{d+2,d+2}$ is a flat in $\cR_d$.
% If this were not the case and $uv \in \cl_d(K_{d+2,d+2})$ for $u,v$ in the same part of the vertex bipartition, then by the symmetry of $\cR_d$ we would have $\cl_d(K_{d+2,d+2}) = K_{2d+4}$, contradicting that $K_{d+2,d+2}$ is $\cR_d$-flexible.
% Add a new vertex $w$ of degree $d+1$ to $K_{d+2,d+2}$ by connecting it to all but one vertex $u$ in a part of the vertex bipartition: this increases the rank by exactly $d+1$ as we can alternatively model this as adding an edge to $K_{d+2,d+2}$ (increasing the rank) and then performing a 1-extension.
% It follows that
% \begin{align*}
% r_d(K_{d+2,d+3}) &\geq r_d(K_{d+2,d+3} - \{uw\}) = r_d(K_{d+2,d+2}) + (d+1) = (d+2)(d+2) - 1 + (d+1) \\
% &= (d+2)(d+3) - 2 = |E(K_{d+2,d+3})| - 2 \, .
% \end{align*}
% Hence $K_{d+2,d+3}$ is a double $\cR_d$-circuit.}

Let $K$ be the graph obtained from $K_{d+2,d+3}$ by deleting two independent edges, say $uv$ and $u'v'$.
We show $K$ is $\cR_d$-independent, implying that $r_d(K_{d+2,d+3}) \geq |E(K_{d+2,d+3})| - 2$ and hence is equal. 
We will make repeated use of Lemma~\ref{lem:01ext} that 1-extension preserves $\cR_d$-independence.

\begin{figure}
    \centering
    \includegraphics[width=0.4\linewidth]{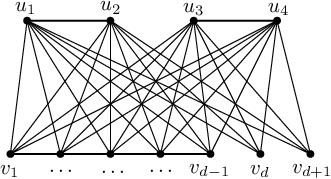}
    \caption{The $\cR_d$-independent graph $G$ from the proof of Theorem~\ref{thm:K_67+dcp}. It is a copy of $K_{4,d+1}$ with $d$ edges added: $u_1u_2, u_3u_4$ and $v_iv_{i+1}$ for $1 \leq i \leq d-2$.}
    \label{fig:R_d+counterexample}
\end{figure}

Consider the graph $G$ in Figure~\ref{fig:R_d+counterexample}.
It is a graph on $d+5$ vertices with $4(d+1) + d = 5d + 4$ edges.
We first note that when $d \geq 4$, we have
\[
|E(G)| = 5d + 4 < \frac{d(d+9)}{2} = d(d+5) - \binom{d+1}{2} \, ,
\]
hence $G$ is $\cR_d$-flexible.
Moreover, \cite[Corollary 2]{GGJN} implies it does not have enough edges to contain a circuit, hence it is $\cR_d$-independent.

We next show that we can obtain $K$ from $G$ via a series of 1-extensions, proving $K$ is also $\cR_d$-independent.
For each $1 \leq i \leq d-2$, we add the degree $d+1$ vertex $u_{i+4}$ by making it adjacent to each $v_j$, and delete the edge $v_iv_{i+1}$.
This gives us a copy of $K_{d+2,d+1}$ with two additional edges.
Finally, we add the degree $d+1$ vertices $v$ and $v'$ by making them adjacent to each $u_i \neq u$ (resp. $u_i \neq u')$ and deleting the edge $u_1u_2$ (resp. $u_3u_4$).
The resulting graph is $K$, demonstrating $K$ is $\cR_d$-independent and hence that $K_{d+2,d+3}$ is a double $\cR_d$-circuit.

We now show that $K_{d+2,d+3}$ is not a balanced double circuit of \added{$\cR_d(K_n)$}.
Recall that $K_{d+2,d+3}$ contains at least $d+3$ $\cR_d$-circuits $G_i \cong K_{d+2,d+2}$.
As the sets $A_i := E(K_{d+2,d+3}) \setminus E(G_i)$ give a partition of the edge set of $K_{d+2,d+3}$, they must form its principal partition.
We prove by contradiction that $G_i$ is a closed set in \added{$\cR_d(K_n)$}.
Suppose $G_i$ is not a closed set, then %\added{there exists $u_1,u_2$ in the same part of the vertex bipartition such that}
without loss of generality, 
adding the edge $u_1u_2$ to $G_i$ does not increase the rank.
Pick any edge $uv \in E(G_i)$ with $u \neq u_1, u_2$.
As $G_i - uv$ is $\cR_d$-independent, it follows that
\added{
\[
r(G_i + u_1u_2 - uv) \leq r(G_i + u_1u_2) = r(G_i) = r(G_i - uv) \leq r(G_i + u_1u_2 - uv)\, ,
\]
and hence $r(G_i) = r(G_i +u_1u_2 - uv)$.
However, we claim that $G_i - uv + u_1u_2$ is $\cR_d$-independent, giving a contradiction.}
We can obtain $G_i - uv + u_1u_2$ from $G$ via a series of 1-extensions as before:
explicitly, we add the degree $d+1$ vertex $u_{m+4}$ by making it adjacent to each $v_j$, and delete the edge $v_mv_{m+1}$ for each $1 \leq m \leq d-2$.
Finally, we add the degree $d+1$ vertex $v$ by making it adjacent to each $u_j \neq u$ and deleting the edge $u_3u_4$.
Hence, we have
\[
r_d\left(\bigcap_{i=1}^{d+3} \cl_d (G_i)\right) = r_d\left(\bigcap_{i=1}^{d+3} G_i\right)=0 < d+1 = d+3-2 \, .
\]
To extend this counterexample to a $k$-fold circuit that is not balanced for $k>2$, let $D$ be $K_{d+2,d+3}$ with $k-2$ edge disjoint copies of the $\cR_d$-circuit $K_{d+2}$.
\added{Observe that $D \subseteq K_n$ if and only if $n \geq k(d+2) + 1$.}
The principal partition of $D$ will have $\ell := d+3 + k-2$ parts by Proposition~\ref{prop:decompose}. 
However, we still have $r_d(\bigcap_{i=1}^{\ell} \cl_d (D \setminus A_i))=0 < d+1 = \ell - k$.
\end{proof}

\noindent
It is an open problem to decide whether the double circuit property holds in \added{$\cR_3(K_n)$}. 

In the remainder of this section, we will obtain a number of sufficient conditions for a particular $k$-fold $\cR_d$-circuit $D$ to be balanced. One of these, Theorem \ref{thm:balanced+rigid+k+circuit} below, implies Makai's result since every $k$-fold circuit in both \added{$\cR_1(K_n)$} and \added{$\cR_2(K_n)$} satisfies the hypotheses of this theorem.

Let $G = (V,E)$ be a $k$-fold $\cR_d$-circuit with principal partition $\cA = \{A_1, \dots, A_\ell\}$.
We can consider the principal partition as a colouring of the edges of $G$.
We say a vertex $x\in V$ is {\em monochromatic} if all edges of $G$ incident with $x$ belong to the same part $A_i$, and that $x$ is \emph{technicolour} if it is not monochromatic.

%The following result is a useful observation when studying $k$-fold $\cR_d$-circuits.
\added{The following results are useful observations when studying $k$-fold $\cR_d$-circuits in terms of their technicolour vertices.}

% \begin{lemma}[\cite{JNS}]\label{lem:tech}
% Let $G=(V,D)$ be a $k$-fold $\cR_d$-circuit with principal partition $\{A_1,A_2,\dots, A_\ell\}$ and let $X$ denote the set of technicolour vertices in $G$. Then $\sum_{i=1}^\ell |V(A_i)|=(\ell-1)|V|+|X|$ and $X$ is the vertex set of $\bigcap_{i=1}^\ell \cl_d(D\setminus A_i)$. 
% \end{lemma}

\added{
\begin{lemma}\label{lem:tech} %[\cite{JNS}]
    Let $G=(V,D)$ be a $k$-fold $\cR_d$-circuit where $k\geq2$ with principal partition $\{A_1,A_2,\dots, A_\ell\}$ and let $X$ denote the set of technicolour vertices in $G$.
    Write $G_i = G[D \setminus A_i]$ for the $(k-1)$-fold $\cR_d$-circuits in $G$.
    Then $\sum_{i=1}^\ell |V(G_i)|=(\ell-1)|V|+|X|$ and $X = \bigcap_{i=1}^\ell V(G_i)$. 
\end{lemma}

\begin{proof}
    Let $Y_i \subseteq V$ be the set of monochromatic vertices whose incident edges are contained in $A_i$.
    Observe that $V(G_i) = V \setminus Y_i$, and that $(X,Y_1, \dots Y_\ell)$ is a partition of $V$.
    It follows that $X = \bigcap_{i=1}^\ell V(G_i)$, and furthermore
    \[
    \sum_{i=1}^\ell |V(G_i)| = \ell|V| - \sum_{i=1}^\ell |Y_i| = (\ell - 1)|V| + |X| \, .
    \]
    % As taking closure does not change the vertex set, we immediately obtain that $X$ is the vertex set of $\bigcap_{i=1}^\ell \cl_d(G_i)$.
\end{proof}}

\added{
\begin{lemma} \label{lem:vertex+separator}
    Let $G$ be a $k$-fold $\cR_d$-circuit where $k \geq 2$, and let $X$ denote the set of technicolour vertices in $G$.
    If $|X| \leq d+1$ then $X$ is a vertex separator of $G$.
\end{lemma}
\begin{proof}
    By Lemma \ref{lem:maxk}, $G$ has at least $d+2$ vertices and so $G - X$ is non-empty.
    %As $k \geq 2$, it follows from... For any two parts $A_i, A_j$ of the principal partition of , we have $V(A_i)\cap V(A_j)\subseteq X$ and hence $X$ is a vertex separator of $G$. 
    %Suppose $|X|<d$. 
    Moreover, observe that every connected component of $G- X$ is monochrome.
    If $G-X$ has exactly one component, its edge set is contained in some part $A_1$ of the principal partition, and then $G_1=G[E\setminus A_1]$ is a $(k-1)$-fold $\cR_d$-circuit with $V(G_1)\subseteq X$. 
    This contradicts Lemma \ref{lem:maxk} that every $(k-1)$-fold $\cR_d$-circuit has at least $d+2$ vertices.
    Hence $G-X$ has at least two components, and so $X$ is a vertex separator.
    %The assumption $|X|<d$ now contradicts the fact that $G$ is $d$-connected by Lemma \ref{lem:rigiddouble}.
\end{proof}

\begin{lemma}\label{lem:technicolour+degree}
    Let $G = (V,E)$ be a $k$-fold $\cR_d$-circuit where $k \geq 2$.
    If $x$ is a technicolour vertex of $G$, then $|N_G(x)| \geq d+2$.
\end{lemma}
\begin{proof}
    Pick some edge $e$ incident to $x$, and let $A_i$ be the part of the principal partition that contains it.
    As $G_i = G[E - A_i]$ is a $(k-1)$-fold $\cR_d$-circuit, it follows from Lemma \ref{lem:maxk} that $|N_{G_i}(x)| \geq d+1$.
    As $N_{G_i}(x) \cup \{e\} \subseteq N_G(x)$, the claim follows.
\end{proof}
}

\subsubsection{\boldmath $k$-fold circuits with few technicolour vertices}

We will show in  Theorem~\ref{thm:atmost2technicolour} below that $k$-fold $\cR_d$-circuits with at most two technicolour vertices are balanced.
We first consider the case where we have at most one technicolour vertex in Proposition~\ref{prop:1technicolour}.
We then prove a classification of connected $k$-fold $\cR_d$-circuits with two technicolour vertices in Proposition~\ref{prop:2technicolour} to assist with the proof of the main result. Throughout, we will make heavy use of the following lemma.

\begin{lemma}[{\cite[Lemma 4.9]{GJ}}]\label{lem:gjcon2sum}
Let $G$ be the graphical 2-sum of $G_1$ and $G_2$ along an edge $e$. Then the following are equivalent:
\begin{enumerate}
    \item $G$ is $\cR_d$-connected;
    \item $G_1$ and $G_2$ are $\cR_d$-connected;
    \item $G+e$ is $\cR_d$-connected. 
\end{enumerate}  
\end{lemma}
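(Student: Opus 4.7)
The plan is to translate the graphical connectedness conditions into matroidal ones via Theorem~\ref{thm:rigidity+2sum+pc}, and then apply the connectedness characterization for matroid 2-sums and parallel connections from Lemma~\ref{lem:2summat}(a).

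First I would handle the main case where $e$ is not a coloop in either $\cR_d(G_i)$. Under this assumption, Theorem~\ref{thm:rigidity+2sum+pc} gives
\[
\cR_d(G) = \cR_d(G_1) \oplus_2 \cR_d(G_2) \quad \mbox{and} \quad \cR_d(G+e) = P(\cR_d(G_1), \cR_d(G_2)).
\]
Lemma~\ref{lem:2summat}(a) applied to either identity shows that the matroid on the left is connected if and only if both $\cR_d(G_i)$ are connected. Thus conditions (1), (2), and (3) are all equivalent to the common condition ``both $\cR_d(G_1)$ and $\cR_d(G_2)$ are connected''.

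The main obstacle is to rule out the degenerate case where $e$ is a coloop in some $\cR_d(G_i)$, since then the matroidal identities from Theorem~\ref{thm:rigidity+2sum+pc} do not apply directly. In this situation (2) fails automatically, since a connected matroid on two or more elements admits no coloop. To show (1) and (3) also fail, I would trace through the circuit structure: the coloop condition means that $e$ lies in no $\cR_d(G_1)$-circuit, so any potential circuit of $\cR_d(G+e)$ or $\cR_d(G)$ that would link edges of $G_1 - e$ to edges of $G_2 - e$ cannot be realised via the merging mechanism (as in the proof of Theorem~\ref{thm:rigidity+2sum+pc}, which requires a circuit through $e$ on each side). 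Consequently, $E(G_1) - e$ and $E(G_2) - e$ are forced into separate components of both $\cR_d(G)$ and $\cR_d(G+e)$, so (1) and (3) fail as required. Combining this with the main case establishes the equivalence.
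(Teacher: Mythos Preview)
The paper does not give its own proof of this lemma; it is quoted verbatim from \cite[Lemma~4.9]{GJ} and used as a black box. So there is no in-paper argument to compare against.

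Your approach is sound and avoids circularity, since Theorem~\ref{thm:rigidity+2sum+pc} is established in the paper using only Lemma~\ref{lem:2-sum} and results of \cite{Ser}, not the present lemma. In the main case your reduction to Lemma~\ref{lem:2summat}(a) is exactly right. For the degenerate case, your sketch is correct but you should make explicit that the ``other containment'' argument in the proof of Theorem~\ref{thm:rigidity+2sum+pc} (showing every $\cR_d$-circuit of $G_1\cup G_2$ not lying in a single $G_i$ has the form $(C_1\cup C_2)-e$ with $e\in C_i\in\cC(\cR_d(G_i))$) nowhere uses the non-coloop hypothesis, so it still applies and forces $E(G_1)-e$ and $E(G_2)-e$ into distinct components. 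You should also record the implicit non-degeneracy assumption $|E(G_i)|\geq 2$: this is required both for Lemma~\ref{lem:2summat} and for your claim that a coloop forces disconnectedness in (2); without it the equivalence can genuinely fail (take $G_1=K_2$).
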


\begin{proposition}\label{prop:1technicolour}
Let $G=(V,D)$ be a $k$-fold $\cR_d$-circuit with at most one technicolour vertex.
Then $G$ is a trivial $k$-fold $\cR_d$-circuit.
\end{proposition}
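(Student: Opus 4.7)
The plan is to bound the number of parts $\ell$ of the principal partition via a double-counting argument, and then apply Proposition~\ref{thm:principalpartfork}(a) to pass to a bound on $k$. Specifically, I will show that the hypothesis $|X| \leq 1$ forces $\ell \leq 2$, which combined with $\ell \geq k$ gives $k \leq 2$ and makes triviality automatic.

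For each $v \in V$, let $s_v$ denote the number of parts $A_i$ of the principal partition with $v \in V(A_i)$. Splitting $\sum_{i=1}^\ell |V(A_i)| = \sum_{v \in V} s_v$ according to whether $v$ is monochromatic ($s_v = 1$) or technicolour ($s_v \geq 2$), and combining with the identity in Lemma~\ref{lem:tech}, I obtain
\[
\sum_{v \in X} s_v = (\ell - 2)|V| + 2|X| \, .
\]
Applying the crude upper bound $s_v \leq \ell$ on the left-hand side and rearranging gives
\[
(\ell - 2)(|V| - |X|) \leq 0 \, .
\]
Since $D$ is nonempty (as $k \geq 1$) we have $|V| \geq 2$, while by hypothesis $|X| \leq 1$, so $|V| - |X| > 0$. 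This forces $\ell \leq 2$.

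Combining $\ell \leq 2$ with $\ell \geq k$ from Proposition~\ref{thm:principalpartfork}(a) yields $k \leq 2$. If $k \leq 1$ then the principal partition has exactly $k$ parts automatically (the only $0$-fold circuit being $\emptyset$), and if $k = 2$ then $\ell = 2 = k$ as needed. In every case $G$ is a trivial $k$-fold $\cR_d$-circuit.

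I do not anticipate significant obstacles here: Lemma~\ref{lem:tech} does most of the work, and the hypothesis $|X| \leq 1$ is strong enough that even the crude bound $s_v \leq \ell$ closes the argument. The only point needing a moment of care is verifying the edge-case inequalities $|V| \geq 2$ and $|X| \leq 1$ before concluding $\ell \leq 2$; everything else is a short computation.
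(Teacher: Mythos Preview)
Your argument has a genuine gap: the identity you take from Lemma~\ref{lem:tech}, namely $\sum_{i=1}^\ell |V(A_i)| = (\ell-1)|V| + |X|$, is a typo in the paper. The formula that actually holds (and is the one the paper itself uses in the proof of Theorem~\ref{thm:balanced+rigid+k+circuit}, where $V_i = V(G[D\setminus A_i])$) is
\[
\sum_{i=1}^\ell |V(D\setminus A_i)| = (\ell-1)|V| + |X|.
\]
A concrete counterexample to your intermediate claim $\ell \leq 2$: let $G$ be the disjoint union of three copies of $K_4$ in $\cR_2$. This is a trivial $3$-fold circuit with $|X|=0$, $|V|=12$, and $\ell=3$; here $\sum_i |V(A_i)|=3\cdot 4=12$ while $(\ell-1)|V|+|X|=24$, and indeed $\sum_i |V(D\setminus A_i)|=3\cdot 8=24$. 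More generally, $k$ disjoint copies of $K_4$ give a $k$-fold $\cR_2$-circuit with $|X|=0$ and $\ell=k$ for every $k$, so neither $\ell$ nor $k$ is bounded under the hypothesis $|X|\le 1$, and the strategy of forcing $\ell\le 2$ cannot succeed. If you plug the corrected identity into your double count (with $t_v$ counting the sets $D\setminus A_i$ containing $v$), you get $t_v=\ell-1$ for monochromatic $v$ and $t_v=\ell$ for technicolour $v$, which just reproduces the identity and yields no bound.

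The paper instead argues by induction on $k$. Since $V(A_i)\cap V(A_j)\subseteq X$ for distinct $i,j$, the set $X$ of size at most one is a vertex separator of $G$, so $\cR_d(G)$ is disconnected and decomposes as $\cR_d(G_1)\oplus\cR_d(G_2)$ with each $G_i$ a $k_i$-fold circuit and $k_1+k_2=k$, $k_i<k$. Proposition~\ref{prop:decompose} shows that the principal partition of $G$ is the union of those of $G_1,G_2$, so the technicolour sets of the $G_i$ are still contained in $X$; by induction each $G_i$ is trivial, hence by Lemma~\ref{lem:trivialkcircuit} a direct sum of circuits, and therefore so is $G$.
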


\begin{proof}
We use induction on $k$. The base case when $k=1$ follows since all 1-circuits are trivial. Hence we may suppose that $k\geq 2$. 

Let $X$ be the set of technicolour vertices in $G$.
%For any two parts of the principal partition $A_i, A_j$, we have $V(A_i)\cap V(A_j)\subseteq X$ and hence $X$ is a vertex separator of $G$. 
\added{By Lemma \ref{lem:vertex+separator}, $X$ is a vertex separator of $G$.}
Since 
$|X|\leq 1$, this implies that
$G$ is not $\cR_d$-connected and hence we have $\cR_d(G)=\cR_d(G_1)\oplus\cR_d(G_2)$ for two $k_i$-fold circuits $G_i$ in $\cR_d$ with $k_1+k_2=k$ and $V_1\cap V_2=X$ by Proposition \ref{prop:decompose}.
Let $\cA_i=\{A_1^i,\ldots A_{m_i}^i$\} be the principal partition of $G_i$ and 
$X_i$ be the set of technicolour vertices of $G_i$. Then 
$\cA_1\cup \cA_2$ is the principal partition of $G$ by Proposition \ref{prop:decompose} and 
$X_1\cup X_2\subseteq X$. We may  apply  induction to deduce that each $G_i$ is a trivial $k_i$-fold $\cR_d$-circuit. Lemma \ref{lem:trivialkcircuit} now tells us that $\cR_d(G_i)$ is the direct sum of $k_i$ $\cR_d$-circuits.
Since $\cR_d(G)=\cR_d(G_1)\oplus\cR_d(G_2)$ and $k_1+k_2=k$, $\cR_d(G)$ is the direct sum of $k$ $\cR_d$-circuits. Hence $G$ is a trivial $k$-fold $\cR_d$-circuit.
\end{proof}

Our next result classifies connected $k$-fold $\cR_d$-circuits with only two technicolour vertices, generalising Example \ref{ex:db}.

\begin{proposition}\label{prop:2technicolour}
Let $G=(V,D)$ be an $\mathcal{R}_d$-connected $k$-fold $\cR_d$-circuit for some $k\geq 2$ and let $X=\{u,v\}$ be the set of technicolour vertices in $G$.
\begin{enumerate}[label=(\roman*)] 
    \item If $uv\in D$, then $G$ is the graphical parallel connection of $k$ $\mathcal{R}_d$-circuits along $uv$, \label{technicolor+case+i}
    \item If $uv\notin D$, then $G$ is obtained from the graphical parallel connection of $k+1$ $\mathcal{R}_d$-circuits along $uv$ by deleting $uv$. \label{technicolor+case+ii}
\end{enumerate}
\end{proposition}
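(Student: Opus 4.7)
The plan is to prove both cases simultaneously by induction on $|V(G)|$, peeling off one $\cR_d$-circuit at a time along the $2$-vertex cut $\{u,v\}$.

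First I verify that $\{u,v\}$ is a $2$-vertex cut. Since only $u,v$ are technicolour, distinct parts $A_i,A_j$ of the principal partition satisfy $V(A_i)\cap V(A_j)\subseteq\{u,v\}$; and if some $A_j$ failed to contain both $u$ and $v$, then the pair $A_j,\,B_j:=D\setminus A_j$ would realise $\cR_d(G)$ as a direct sum along a vertex cut of size at most one, contradicting $\cR_d$-connectedness. Hence $\{u,v\}\subseteq V(A_j)$ for every $j$, and $\{u,v\}$ is a genuine $2$-vertex cut.

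Next, pick a connected component $V_1$ of $G-\{u,v\}$ and set $H=G[V_1\cup\{u,v\}]$ and $G'=G[V\setminus V_1]$, so that $V(H)\cap V(G')=\{u,v\}$. In case (i) both $H$ and $G'$ contain $uv$, so $G=H\cup G'$ is the graphical parallel connection along $uv$; in case (ii) neither contains $uv$, and $G$ is the graphical $2$-sum of $H+uv$ and $G'+uv$ along $uv$. Once it is verified that $uv$ is not a coloop in the relevant pieces (which follows from the cyclicity established below), Theorem~\ref{thm:rigidity+2sum+pc} realises $\cR_d(G)$ as the corresponding matroid parallel connection or $2$-sum; Lemmas~\ref{lem:k-pc} and \ref{lem:k-sum} give that each piece is a $k_i$-fold $\cR_d$-circuit with $k_1+k_2=k$ (case i) or $k_1+k_2=k+1$ (case ii); and Lemma~\ref{lem:gjcon2sum} ensures that both pieces are $\cR_d$-connected.

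The crucial step is to show that $H$ (case i) or $H+uv$ (case ii) is itself an $\cR_d$-circuit, i.e.\ $k_1=1$. Note that $H-\{u,v\}=G[V_1]$ is connected by choice of $V_1$. One first argues that the technicolour set of $H$ (or $H+uv$) in its own principal partition is contained in $\{u,v\}$, using the identity $\cR_d(H)=\cR_d(G)|_{E(H)}$ available for induced subgraphs under generic realisations and the fact that every vertex of $V_1$ has all its $G$-edges lying in the single part $A_{\sigma(V_1)}$ of $G$'s principal partition. Given this, if the technicolour set has size at most one, Proposition~\ref{prop:1technicolour} together with $\cR_d$-connectedness forces $k_1=1$. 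If it equals $\{u,v\}$ and $k_1\geq 2$, the inductive hypothesis applies (since $|V(H)|<|V(G)|$ by $m\geq 2$) and decomposes $H$ as a parallel connection of $k_1\geq 2$ $\cR_d$-circuits along $uv$, which would force $H-\{u,v\}$ to have at least $k_1$ connected components --- contradicting its connectedness. Hence $k_1=1$.

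Applying the induction to $G'$ (case i) or $G'+uv$ (case ii), which is an $\cR_d$-connected $(k-1)$-fold or $k$-fold circuit with technicolour set $\{u,v\}$ on strictly fewer vertices, yields its decomposition as the appropriate parallel connection; gluing back with the $\cR_d$-circuit $H$ then produces the desired decomposition of $G$. The main obstacle is justifying that the technicolour sets of both $H$ and $G'$ (or their $uv$-augmentations) lie in $\{u,v\}$, which is what enables the inductive hypothesis to apply; this requires a careful matroidal analysis of how the equivalence relation characterising the principal partition (Proposition~\ref{thm:principalpartfork}(b)) restricts from $G$ to the induced subgraphs, together with a verification of the coloop hypotheses needed for Theorem~\ref{thm:rigidity+2sum+pc}.
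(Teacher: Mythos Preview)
Your approach is correct and uses the same core ingredients as the paper's proof: the $2$-separation along $\{u,v\}$, Lemma~\ref{lem:gjcon2sum} to propagate $\cR_d$-connectedness to the pieces (which also disposes of the coloop hypotheses you mention), Propositions~\ref{prop:2sumffold}/\ref{prop:pckfold} to relate the principal partitions, and Proposition~\ref{prop:1technicolour} for the degenerate case. In particular, the step you flag as the ``main obstacle'' is exactly what Propositions~\ref{prop:2sumffold} and~\ref{prop:pckfold} resolve: they show that the principal partition of $G$ restricted to a piece refines the piece's own principal partition, so a vertex monochromatic in $G$ remains monochromatic in the piece. (One can also argue this directly from Proposition~\ref{thm:principalpartfork}(b): if $e,f$ incident to $w\in V_1$ lie in different parts of $\cA_H$ then $r(E(H)-e-f)=r(E(H))$, whence $e,f\in\cl(E(H)-e-f)\subseteq\cl(D-e-f)$ and so $e,f$ lie in different parts of $\cA_G$.) The organisation differs slightly: the paper inducts on $k$ for case~(i), takes an \emph{arbitrary} $2$-separation along $\{u,v\}$, and recurses on both pieces, then reduces case~(ii) to case~(i) in one step. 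You instead induct on $|V|$, insist that one piece $H$ corresponds to a single connected component of $G-\{u,v\}$, and use the inductive hypothesis on $H$ (for a contradiction, via the connectedness of $G[V_1]$) to force $k_1=1$. This is a pleasant refinement---one circuit is peeled off per step---at the cost of a second appeal to induction alongside the main recursion on $G'$; the paper's version avoids this by allowing both $k_i$ to exceed~$1$.
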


\begin{proof}
We prove case~\ref{technicolor+case+i} by induction on $k$, and then use this to deduce case~\ref{technicolor+case+ii}.

Suppose $uv\in D$. 
\added{By Lemma \ref{lem:vertex+separator},} $X$ is a 2-vertex separation of $G$, and we can write $G$ as the graphical parallel connection of two graphs $G_i=(V_i,D_i)$ along $uv$.
Moreover, Lemma~\ref{lem:gjcon2sum} implies $G_1$ and $G_2$ are $\cR_d$-connected, hence $uv$ is not a coloop in either $G_1$ or $G_2$.
We deduce from Lemma~\ref{lem:k-pc} that each $G_i$ is a $k_i$-fold $\cR_d$-circuit where $k = k_1 + k_2$.
For the base case where $k=2$, each $G_i$ must be a (1-fold) $\cR_d$-circuit and so it immediately follows that $G$ is the graphical parallel connection of two $\cR_d$-circuits.

For the case $k \geq 3$, let $\cA_i=\{A_1^i,\ldots, A_{m_i}^i$\} be the principal partition of $G_i$ where $uv \in A_1^i$, and $X_i$ be the set of technicolour vertices of $G_i$.
By Proposition \ref{prop:pckfold}, the principal partition of $G$ is 
\[
\cA = \{uv, A_1^1-uv, A_2^1, \dots, A_{m_1}^1, A_1^2-uv, A_2^2, \dots, A_{m_2}^2\} \, .
\]
By comparing $\cA_i$ with $\cA$, we see that $X_i\subseteq X$.
As $G_i$ is $\cR_d$-connected, Proposition~\ref{prop:1technicolour} implies either $G_i$ is an $\cR_d$-circuit and so $X_i = \emptyset$, or $|X_i| \geq 2$ and hence $X_i = X$.
We may now apply induction to deduce that each $G_i$ is the graphical parallel connection of $k_i$ $\mathcal{R}_d$-circuits along $uv$.
Hence $G$ is the graphical parallel connection of $k$ $\mathcal{R}_d$-circuits along $uv$.

\added{For case~\ref{technicolor+case+ii}, we show $G + uv$ is a $k+1$-fold $\cR_d$-circuit, hence the claim follows from case~\ref{technicolor+case+i}.
%Case~\ref{technicolor+case+ii} where $uv\not\in D$ follows very similarly.
As $X$ is a 2-vertex separator of $G$, we can write $G$ is the graphical 2-sum of two graphs $G_i=(V_i,D_i)$ along $uv$.
Lemma~\ref{lem:gjcon2sum} implies $G + uv$ is $\cR_d$-connected, hence $uv$ is not a coloop in $G+uv$.
It follows that 
\[
r_d(G+uv) = r_d(G) = |E(G)| - k = |E(G+uv)| - (k+1) ,
\]
hence $G+uv$ is a $k+1$-fold $\cR_d$-circuit.}
% By the same deduction as in case \ref{technicolor+case+i}, we deduce that each $G_i$ is an $\cR_d$-connected $k_i$-fold $\cR_d$-circuit where $k + 1 = k_1 + k_2$.
% Let $\cA_i=\{A_1^i,\ldots, A_{m_i}^i$\} be the principal partition of $G_i$ where $uv \in A_1^i$, and $X_i$ be the set of technicolour vertices of $G_i$.
% By Proposition \ref{prop:2sumffold}, the principal partition of $G$ is 
% \[
% \cA = \{(A_1^1 \cup A_1^2)-uv, A_2^1, \dots, A_{m_1}^1, A_2^2, \dots, A_{m_2}^2\} \, .
% \]
% By comparing $\cA_i$ with $\cA$, we see that $X_i\subseteq X$.
% As $G_i$ is $\cR_d$-connected, Proposition~\ref{prop:1technicolour} implies either $G_i$ is an $\cR_d$-circuit and so $X_i = \emptyset$, or $|X_i| \geq 2$ and hence $X_i = X$.
% We may now apply case~\ref{technicolor+case+i} to deduce that each $G_i$ is the graphical parallel connection of $k_i$ $\mathcal{R}_d$-circuits along $uv$.
% Hence $G$ is the graphical parallel connection of $k+1$ $\mathcal{R}_d$-circuits along $uv$ with $uv$ removed.
\end{proof}

\begin{theorem}\label{thm:atmost2technicolour}
Let $G=(V,D)$ be a $k$-fold $\cR_d$-circuit with at most two technicolour vertices.
Then $G$ is a balanced $k$-fold $\cR_d$-circuit.
\end{theorem}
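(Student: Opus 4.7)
The plan is to split the argument according to the number of technicolour vertices $|X|$ and the $\cR_d$-connectedness of $G$. If $|X| \leq 1$, Proposition \ref{prop:1technicolour} shows that $G$ is a trivial $k$-fold $\cR_d$-circuit and Corollary \ref{cor:trivialstrongalt} then gives that it is balanced. From now on assume $X = \{u,v\}$ with $|X| = 2$; this forces $k \geq 2$, since a trivial $1$-fold circuit has no technicolour vertices.

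If $G$ is $\cR_d$-disconnected, I would decompose $\cR_d(G) = \cR_d(G_1) \oplus \cdots \oplus \cR_d(G_s)$ into $\cR_d$-connected components. By Proposition \ref{prop:decompose}, each $G_i$ is a $k_i$-fold $\cR_d$-circuit with $\sum_i k_i = k$, and the principal partition of $G$ is the disjoint union of those of the $G_i$. Since any technicolour vertex of $G_i$ is also technicolour in $G$, we have $|X(G_i)| \leq 2$; applying the $\cR_d$-connected case (treated next) together with the $|X| \leq 1$ case to each component shows that each $G_i$ is balanced, and Theorem \ref{thm:disconalt} then yields that $G$ is balanced.

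For the remaining case ($G$ is $\cR_d$-connected with $|X| = 2$), Proposition \ref{prop:2technicolour} tells us that either $G$ is the graphical parallel connection of $\cR_d$-circuits $C_1, \ldots, C_k$ along $uv$ (when $uv \in D$) or $G$ is obtained from the parallel connection of $C_1, \ldots, C_{k+1}$ along $uv$ by deleting $uv$ (when $uv \notin D$). The main technical step is to read off the principal partition from this structure: iterating Proposition \ref{prop:pckfold} in the first subcase gives principal partition $\{\{uv\}, C_1 - uv, \ldots, C_k - uv\}$, and combining this with Proposition \ref{prop:2sumffold} in the second subcase gives $\{C_1 - uv, \ldots, C_{k+1} - uv\}$, so $\ell = k+1$ and $\ell - k = 1$ in both subcases. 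By Lemma \ref{lem:tech}, the set $I := \bigcap_{i=1}^\ell \cl(D \setminus A_i)$ has vertex set $\{u,v\}$, so $I \subseteq \{uv\}$ and $r(I) \leq 1$. For the reverse inequality I would verify $uv \in \cl(D \setminus A_i)$ for every part $A_i$: in subcase (i), if $A_i = \{uv\}$ then $D \setminus A_i$ contains $C_j - uv$ for some $j$, and $uv \in \cl(C_j - uv) \subseteq \cl(D \setminus A_i)$ since $C_j$ is a circuit through $uv$, while if $A_i = C_j - uv$ then $uv \in D \setminus A_i$ directly; in subcase (ii), for any $j \neq i$ we have $C_j - uv \subseteq D \setminus A_i$ with $uv \in \cl(C_j - uv)$. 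Hence $r(I) = 1 = \ell - k$, which matches the upper bound from Theorem \ref{thm:strong}, so $G$ is balanced. The main obstacle is the careful bookkeeping of the principal partition through iterated parallel connection and 2-sum, but Propositions \ref{prop:pckfold} and \ref{prop:2sumffold} handle this cleanly; once the principal partition is in hand, locating $uv$ in every closure $\cl(D \setminus A_i)$ is immediate from the fact that each building-block circuit $C_j$ contains $uv$.
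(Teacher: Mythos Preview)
Your proposal is correct and follows essentially the same approach as the paper: split on $|X|$, reduce the disconnected case via Theorem~\ref{thm:disconalt}, and in the connected $|X|=2$ case use Proposition~\ref{prop:2technicolour} to obtain the explicit principal partition with $\ell=k+1$, then verify $uv$ lies in every closure. Your verification that $uv \in \cl(D\setminus A_i)$ via the containment $C_j - uv \subseteq D\setminus A_i$ for a suitable circuit $C_j$ is in fact slightly more direct than the paper's version, which in subcase~(ii) argues via the observation that $G_i + uv$ is itself a $k$-fold circuit.
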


\begin{proof}
Let $X$ be the set of technicolour vertices of $G$.
If $|X|\leq 1$ then Proposition \ref{prop:1technicolour} combined with Corollary~\ref{cor:trivialstrongalt} gives that $G$ is balanced.
Hence we may suppose $X = \{u,v\}$.
Consider the following three cases:

Case 1: $G$ is $\cR_d$-connected, and $uv \in D$.
By Proposition \ref{prop:2technicolour}, $G$ is the graphical parallel connection of $k$ $\cR_d$-circuits $\{C_1, \dots, C_k\}$ along $uv$.
By Lemma~\ref{lem:k-pc}, this implies that $G$ is also the matroidal parallel connection of those circuits.
Iterating Proposition~\ref{prop:pckfold} gives that the principal partition of $G$ is $\{C_1 - uv, \dots, C_k - uv, uv\}$.
Write $G_i = G \setminus (C_i - uv)$ for all $1 \leq i \leq k$ and $G_{k+1} = G - uv$ for the list of all $(k-1)$-fold circuits of $G$.
Note that as $G$ is a $k$-fold circuit, we have $r_d(G) = r_d(G - uv)$, implying that $uv \in \cl_d(G_{k+1})$.
%Coupled with Lemma~\ref{lem:tech}, 
We get that
\begin{align} \label{eq:pc+balanced}
r_d\left(\bigcap_{i=1}^{k+1} \cl_d(G_i)\right) = r_d(\{uv\}) = 1 = (k+1) - k \, ,    
\end{align}
and hence $G$ is balanced.

Case 2: $G$ is $\cR_d$-connected, and $uv \notin D$.
By Proposition \ref{prop:2technicolour}, $G$ is obtained from the graphical parallel connection of $k+1$ $\cR_d$-circuits $\{C_1, \dots, C_{k+1}\}$ along $uv$ by deleting $uv$.
We can also express this as the graphical 2-sum of $G'$ and $C_{k+1}$ along $uv$, where $G'$ is the graphical parallel connection of $\{C_1, \dots, C_k\}$ along $uv$.
By Lemma~\ref{lem:k-sum}, this implies that $G$ is also the matroidal 2-sum of $G'$ and $C_{k+1}$.
As in Case 1, the principal partition of $G'$ is $\{C_1 - uv, \dots, C_k - uv, uv\}$.
As such, applying Proposition~\ref{prop:2sumffold} gives that the principal partition of $G$ is $\{C_1 - uv, \dots, C_{k+1} - uv\}$.
Write $G_i = G \setminus (C_i - uv)$ for $1 \leq i \leq k+1$ for the list of all $(k-1)$-fold circuits of $G$.
Adding $uv$ to $G_i$ gives $G_i + uv$ as the parallel connection of $k$ $\cR_d$-circuits, and hence $G_i + uv$ is a $k$-fold circuit by Proposition~\ref{prop:2technicolour}.
It follows that $r_d(G_i) = r_d(G_i + uv)$ and hence $uv \in \cl_d(G_i)$.
As in Case 1, we get that \eqref{eq:pc+balanced} holds and hence $G$ is balanced.

Case 3: $G$ is not $\cR_d$-connected.
Decompose $\cR_d(G) = \bigoplus_{j = 1}^s \cR_d(G_j)$ into $\cR_d$-connected components.
By Proposition \ref{prop:decompose}, the principal partition of $\cR_d(G_j)$ is precisely the principal partition of $\cR_d(G)$ restricted to $G_j$.
As such, each $\cR_d(G_j)$ has at most two technicolour vertices and so is balanced by the previous cases.
Theorem~\ref{thm:disconalt} now implies that $\cR_d(G)$ is also balanced.
\end{proof}

\subsubsection{\boldmath $k$-fold circuits in which all $(k-1)$-fold circuits are rigid }

We next establish Theorem~\ref{thm:balanced+rigid+k+circuit} which shows that a $k$-fold $\cR_d$-circuit is balanced if all of its $(k-1)$-fold circuits are $\cR_d$-rigid.
To show this, we need the following lemmas relating $\cR_d$-rigid graphs and $k$-fold $\cR_d$-circuits to $d$-connectivity.
Lemma \ref{lem:intbridge}(\ref{it:intbridge:rank}) immediately implies the following well known result.

\begin{lemma}
\label{lem:dcon}
Let $G$ be an $\cR_d$-rigid graph on at least $d+1$ vertices. Then $G$ is $d$-connected.
\end{lemma}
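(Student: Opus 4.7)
The plan is to argue by contradiction, reducing to a direct application of Lemma~\ref{lem:intbridge}\eqref{it:intbridge:rank}. Suppose $G=(V,E)$ is $\cR_d$-rigid with $|V|\geq d+1$ but fails to be $d$-connected. Since $|V|\geq d+1$, failure of $d$-connectivity means there is a vertex set $S\subseteq V$ with $|S|\leq d-1$ such that $G-S$ is disconnected. I would write $G-S = H_1\cup H_2$ with $V(H_1),V(H_2)$ nonempty and disjoint, and then set $G_1=G[V(H_1)\cup S]$ and $G_2=G[V(H_2)\cup S]$, so that $G=G_1\cup G_2$ and $V(G_1)\cap V(G_2)=S$ has size at most $d-1$.

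Next I would pick any $u\in V(H_1)$ and $v\in V(H_2)$. By construction $uv\notin E$, since $S$ separates $u$ from $v$ in $G$. Lemma~\ref{lem:intbridge}\eqref{it:intbridge:rank} then applies to give $r_d(G+uv)=r_d(G)+1$. On the other hand, since $G$ is $\cR_d$-rigid with $|V|\geq d+1$, we have $r_d(G)=d|V|-\binom{d+1}{2}$ (noting that when $G\cong K_{d+1}$ this equality also holds). Combining these yields $r_d(G+uv)=d|V|-\binom{d+1}{2}+1$, which contradicts the Maxwell bound from Lemma~\ref{lem:max}(1) applied to any maximal independent subset of $G+uv$ (equivalently, to $r_d(G+uv)\leq d|V|-\binom{d+1}{2}$).

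There is no real obstacle here; the only mild subtlety is making sure the vertices $u,v$ needed to apply part \eqref{it:intbridge:rank} of Lemma~\ref{lem:intbridge} actually exist, which is exactly what disconnectedness of $G-S$ guarantees, and checking that the complete-graph clause in the definition of $\cR_d$-rigid does not cause a separate case to arise. So the proof should be short — essentially the two paragraphs above.
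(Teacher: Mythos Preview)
Your proposal is correct and matches the paper's approach: the paper simply states that Lemma~\ref{lem:intbridge}\eqref{it:intbridge:rank} immediately implies the result, and your argument spells out exactly that implication. The only cosmetic difference is that one could phrase the contradiction as ``$G$ rigid means $uv\in\cl_{\cR_d}(E(G))$, so $r_d(G+uv)=r_d(G)$'' rather than invoking Lemma~\ref{lem:max}, but either way the content is the same.
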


\begin{lemma}\label{lem:dcondouble}
\added{Let $k \geq 2$ and} let $G$ be a $k$-fold $\cR_d$-circuit such that at least two $(k-1)$-fold circuits of $G$ are $\mathcal{R}_d$-rigid.
Then $G$ is $\mathcal{R}_d$-rigid if and only if $G$ is $d$-connected.
\end{lemma}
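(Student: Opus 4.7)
The plan is to treat the two directions separately; the forward direction reduces quickly to Lemma~\ref{lem:dcon}, while the backward direction is the main content.

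For the forward direction, suppose $G$ is $\cR_d$-rigid. The hypothesis of two distinct $(k-1)$-fold circuits forces $k \geq 2$, so $G$ contains a $(k-1)$-fold $\cR_d$-circuit, which in turn contains an ordinary $\cR_d$-circuit. Since every $\cR_d$-circuit has minimum degree at least $d+1$ and hence at least $d+2$ vertices, we get $|V(G)| \geq d+2$. Lemma~\ref{lem:dcon} then delivers $d$-connectivity.

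For the backward direction, suppose $G$ is $d$-connected, and let $B_1 = G - A_1$ and $B_2 = G - A_2$ be two $\cR_d$-rigid $(k-1)$-fold circuits, where $A_1, A_2$ are distinct parts of the principal partition of $G$. Since $A_1 \cap A_2 = \emptyset$, the edge-sets of $B_1$ and $B_2$ cover all of $G$, and because $d$-connectivity rules out isolated vertices, $V(B_1) \cup V(B_2) = V(G)$. Let $W = V(B_1) \cap V(B_2)$. The goal is to show $|W| \geq d$; once established, Lemma~\ref{lem:intbridge}(1) applied to $G = B_1 \cup B_2$ yields $\cR_d$-rigidity of $G$.

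I split into two cases. If $V(B_i) \subseteq V(B_{3-i})$ for some $i$, then $W = V(B_i)$; since $B_i$ contains an $\cR_d$-circuit, $|W| \geq d+2$. Otherwise both $V(B_1) \setminus W$ and $V(B_2) \setminus W$ are non-empty, and the key observation is that $W$ separates these two sets in $G$: any edge joining them would lie in $B_1$ or $B_2$, but in either case one endpoint would fall outside the corresponding vertex set, a contradiction. Hence $|W| \geq d$ by $d$-connectivity. The argument is mostly structural; the only point demanding care is this verification that $W$ truly separates $G$ in the second case, which falls out of the edge-decomposition $G = B_1 \cup B_2$.
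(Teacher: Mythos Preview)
Your proof is correct and follows essentially the same approach as the paper's. The paper's version is terser---for necessity it simply invokes Lemma~\ref{lem:dcon}, and for sufficiency it asserts $|V(G_1)\cap V(G_2)|\geq d$ directly from $d$-connectivity before applying Lemma~\ref{lem:intbridge}(\ref{it:intbridge:rig})---whereas your case analysis makes explicit the separation argument that the paper leaves to the reader.
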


\begin{proof}
\added{Note that $k$-fold $\cR_d$-circuits have at least $d+2$ vertices by Lemma \ref{lem:maxk}, hence} Lemma \ref{lem:dcon} gives the necessity.
For the sufficiency, by the hypotheses there are two $\cR_d$-rigid $(k-1)$-fold circuits $G_1,G_2$ in $G$ with $G = G_1 \cup G_2$.
Since $G$ is $d$-connected, we have $|V(G_1)\cap V(G_2)|\geq d$.
Hence Lemma \ref{lem:intbridge} implies the result.
\end{proof}

In the case when $k\geq 3$ it would be interesting to give a similar sufficient condition to Lemma \ref{lem:dcondouble} by making assumptions only on the $1$-circuits. However it is not hard to construct examples of a $k$-fold circuit $G$ in $\cR_2$  in which every $1$-circuit is $\cR_2$-rigid and $G$ is $2$-connected but $G$ not $\cR_2$-rigid. One such construction is to take a cycle $C$ of length $k\geq 4$ and replace each edge of the cycle with a copy of $K_4$. It is straightforward to use Lemmas \ref{lem:coning+rank} and \ref{lem:coning+circuit} to extend this example to all $d\geq 2$. 

\begin{lemma}\label{lem:rigiddouble}
Let \added{$d,k \geq 2$} and let $G=(V,D)$ be a $k$-fold $\cR_d$-circuit such that all $(k-1)$-fold circuits of $G$ are $\mathcal{R}_d$-rigid.
Then the following are equivalent:\\
(1) $G$ is $\mathcal{R}_d$-rigid;\\
(2) $G$ is $d$-connected; and\\
(3) the principal partition of $G$ has more than two parts. 
\end{lemma}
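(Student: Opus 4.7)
The plan is to prove the lemma by separately establishing the two equivalences $(1)\Leftrightarrow(2)$ and $(2)\Leftrightarrow(3)$. I first note that the statement should be read as requiring $k\geq 2$, since for $k=1$ the principal partition has a single part yet $K_{d+2}$ is both rigid and $d$-connected. By Proposition~\ref{thm:principalpartfork} the principal partition then has $\ell \geq k \geq 2$ parts, supplying at least two rigid $(k-1)$-fold circuits $B_i$; since the parts $A_i = D \setminus B_i$ are disjoint, $G = B_i \cup B_{i'}$ for every pair of distinct indices.

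The equivalence $(1)\Leftrightarrow(2)$ is immediate from the two preceding lemmas: the forward direction is Lemma~\ref{lem:dcon} (using $|V(G)|\geq d+2$ from the minimum degree bound in Lemma~\ref{lem:maxk}), while the reverse is exactly Lemma~\ref{lem:dcondouble} applied to any two of the rigid $(k-1)$-fold circuits.

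For $(2)\Rightarrow(3)$, I would argue the contrapositive. If $\ell = 2$, then $\ell \geq k$ forces $k=2$ and $G$ is a trivial $2$-fold circuit, so Lemma~\ref{lem:trivialkcircuit} decomposes $\cR_d(G) = \cR_d|_{A_1}\oplus \cR_d|_{A_2}$ with both $A_i$ rigid $\cR_d$-circuits. Setting $s = |V(A_1)\cap V(A_2)|$, additivity of rank gives
\[
r_d(G) = d(|V(A_1)|+|V(A_2)|) - 2\binom{d+1}{2} = d(|V(G)|+s) - 2\binom{d+1}{2},
\]
and comparison with the Maxwell bound forces $s \leq (d+1)/2$, hence $s\leq d-1$ by integrality and $d\geq 2$. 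Since each rigid $A_i$ has $|V(A_i)| \geq d+1 > s$, the intersection $V(A_1)\cap V(A_2)$ is a proper separator of $G$ of size at most $d-1$, so $G$ is not $d$-connected.

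The hardest direction is $(3)\Rightarrow(2)$. Assume $\ell \geq 3$ and, for contradiction, that $G$ admits a vertex cut $S$ with $|S|\leq d-1$ and $V(G)\setminus S = V_1 \sqcup V_2$. Each $B_i$ is rigid and hence $d$-connected by Lemma~\ref{lem:dcon}, so $V(B_i)\setminus S$ lies entirely in some single side $V_{j(i)} \in \{V_1,V_2\}$. Pigeonholing $\ell \geq 3$ labels from a two-element set yields two indices $i,i'$ with $j(i)=j(i')$, say both equal to $1$. Then no edge of $G[V_2]$ can lie in $B_i$ or in $B_{i'}$, so $G[V_2] \subseteq A_i \cap A_{i'} = \emptyset$. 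But then every vertex of $V_2$ has all its neighbours in $S$, giving degree at most $|S|\leq d-1$ and contradicting $\delta(G)\geq d+1$ from Lemma~\ref{lem:maxk}. The delicate point is exactly this combination: two rigid $(k-1)$-fold circuits landing on the same side of a putative separator force all ``other side'' edges into both of their (disjoint) complements in the principal partition, collapsing an entire induced subgraph against the circuit's minimum degree lower bound.
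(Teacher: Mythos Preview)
Your proof is correct, and you rightly flag the implicit hypothesis $k\geq 2$ (which the paper's own proof also tacitly assumes: after disposing of $k\geq 3$ in the $(1)\Rightarrow(3)$ step it writes ``Hence we may assume that $k=2$'', never addressing $k=1$).

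Your argument for $(1)\Leftrightarrow(2)$ and your counting argument for $(2)\Rightarrow(3)$ are essentially the paper's. The paper counts edges under the extra hypothesis that $G$ is rigid, whereas you count ranks against the Maxwell upper bound without assuming rigidity; both arrive at $|V(A_1)\cap V(A_2)| \le (d+1)/2 < d$.

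The genuine divergence is in closing the cycle. The paper proves $(3)\Rightarrow(1)$ directly: it fixes two parts $A_1,A_2$, sets $G_i=G[D\setminus A_i]$, and either glues the two rigid $(k-1)$-fold circuits along $\geq d$ shared vertices via Lemma~\ref{lem:intbridge}\eqref{it:intbridge:rig}, or, if $|V(G_1)\cap V(G_2)|<d$, uses a third part $A_3$ to exhibit a $(k-1)$-fold circuit $G[D\setminus A_3]$ that is separated by $V(G_1)\cap V(G_2)$ and hence flexible, contradicting the hypothesis. You instead prove $(3)\Rightarrow(2)$ by a connectivity/pigeonhole argument: each rigid $B_i$ is $d$-connected (Lemma~\ref{lem:dcon}) and therefore lands on one side of any putative $(\leq d-1)$-separator, and two of the $\ell\geq 3$ of them landing on the same side forces every edge meeting the other side into $A_i\cap A_{i'}=\emptyset$, contradicting $\delta(G)\geq d+1$. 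Your route is a little more combinatorial and avoids the rigidity–gluing lemma at this step; the paper's route is more constructive in that it produces rigidity of $G$ outright rather than passing back through the equivalence with $d$-connectivity.
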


\begin{proof}
Lemma \ref{lem:dcondouble} gives $(1)\Leftrightarrow (2)$.

For $(3) \Rightarrow (1)$, choose two sets $A_1, A_2$ from the principal partition of $G$ and put $G_i=G[D\setminus A_i]$ for $i=1,2$.
Note that $A_1 = E(G_2) \setminus E(G_1)$ and $A_2 = E(G_1) \setminus E(G_2)$.
If $|V(G_1)\cap V(G_2)|\geq d$ we may use the same argument as in the proof of Lemma \ref{lem:dcondouble} to deduce that $G$ is $\cR_d$-rigid.
So suppose $|V(G_1)\cap V(G_2)|< d$. As the principal partition has more than two parts, there exists some other part $A_3 \subseteq E(G_1) \cap E(G_2)$ giving rise to another $(k-1)$-fold circuit $G' = G[D \setminus A_3]$ in $G$.
We can write $G'$ as the union of two non-empty subgraphs $G_1' = G_1 \cap G'$ and $G_2' = G_2 \cap G'$.
As these intersect in less that $d$ vertices,
$G'$ is $\cR_d$-flexible (by Lemma \ref{lem:dcon}), contradicting the hypothesis that all the $(k-1)$-fold circuits of $G$ are $\cR_d$-rigid.

We now show $(1)\Rightarrow (3)$. Note that if $k \geq 3$ then $(3)$ is always satisfied by Proposition~\ref{prop:principalpartfork}. Hence we may assume that $k=2$.
Choose two sets $A_1, A_2$ from the principal partition of $G$ and put $G_i=G[D\setminus A_i]$ for $i=1,2$. Note that $G = G_1 \cup G_2$.
If $G_1 \cap G_2$ contains an edge, then the circuit exchange axiom implies the existence of another circuit in $G$ and hence another part in the principal partition, so (3) holds.
Thus we may suppose that $G_1 \cap G_2$ contains no edges. Then $|E| = |E_1| + |E_2|$ where $E_i=D\setminus A_i$ is the edge set of $G_i$.
Since $G,G_1$ and $G_2$ are all $\cR_d$-rigid and $k=2$, Lemmas~\ref{lem:max} and \ref{lem:maxk} imply
$$
d|V| - {d+1 \choose 2} + 2 = |E| = |E_1| + |E_2| = d|V_1| + d|V_2| - 2{d+1 \choose 2} + 2. 
$$
Hence $|V_1| + |V_2| - |V| = \frac{d+1}{2} < d$.
This implies $G_1$ and $G_2$ intersect in less than $d$ vertices, contradicting \added{Lemma \ref{lem:dcon}}. %the fact that $(1)\Rightarrow (2)$ by the previous paragraph.
\end{proof}

\begin{theorem} \label{thm:balanced+rigid+k+circuit}
    Suppose that $G=(V,E)$ is a $k$-fold $\cR_d$-circuit such that every $(k-1)$-fold circuit in $G$ is $\mathcal{R}_d$-rigid.
    Then $G$ is balanced.
\end{theorem}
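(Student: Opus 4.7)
The plan is to bracket $r\bigl(\bigcap_i \cl(B_i)\bigr)$ between the upper bound from Theorem~\ref{thm:strong} and a matching lower bound derived from rigidity of $G$ via a standard vertex-deletion rank inequality.

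I first dispose of two easy cases: the trivial case $\ell = k$ is handled by Corollary~\ref{cor:trivialstrongalt}, and $d = 1$ is known since $\cR_1$ satisfies the $k$-fold circuit property. Henceforth assume $d \geq 2$ and $\ell > k$; since a $1$-fold circuit always has principal partition of size $1$, this forces $k \geq 2$ and hence $\ell \geq 3$, and Lemma~\ref{lem:rigiddouble} then yields that $G$ itself is $\cR_d$-rigid. Writing $V_i = V(B_i)$, the hypothesis that each $B_i$ is $\cR_d$-rigid on $V_i$, combined with Lemma~\ref{lem:intbridge}\eqref{it:intbridge:rank}, identifies $\cl(B_i) = E(G[V_i])$ in $\cR_d(G)$: any edge of $G$ within $V_i$ lies in the closure of the rigid $B_i$, while adding any edge with an endpoint outside $V_i$ strictly raises the rank. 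Consequently $\bigcap_i \cl(B_i) = E(G[X])$, where $X = \bigcap_i V_i$ coincides with the set of technicolour vertices by Lemma~\ref{lem:tech}, and Theorem~\ref{thm:strong} supplies $r_d(G[X]) \leq \ell - k$.

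For the matching lower bound I invoke the standard rigidity inequality $r_d(H) - r_d(H - v) \leq d$ for any graph $H$ and vertex $v$: at most $d$ linearly independent constraints can come from edges at $v$, since its position in a generic framework has only $d$ degrees of freedom. Iterating over $V \setminus X$ gives
\[
r_d(G[X]) = r_d(G - (V \setminus X)) \geq r_d(G) - d|V \setminus X| = d|X| - \binom{d+1}{2},
\]
using $r_d(G) = d|V| - \binom{d+1}{2}$. The equality $d|X| - \binom{d+1}{2} = \ell - k$ then follows from a short double count: the monochromatic vertex sets $V_{(i)} = V \setminus V_i$ are pairwise disjoint and cover $V \setminus X$, giving $\sum_i |V_i| = (\ell - 1)|V| + |X|$; rigidity of $B_i$ gives $|B_i| = d|V_i| - \binom{d+1}{2} + (k-1)$; and summing $|B_i| = |E| - |A_i|$ over $i$, combined with $|E| = d|V| - \binom{d+1}{2} + k$ from rigidity of $G$, determines $|X|$. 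This delivers $r_d(G[X]) \geq \ell - k$, matching the upper bound and completing the proof.

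The main obstacle I anticipate is resisting the more symmetric alternative of verifying the modular-pair criterion from Theorem~\ref{thm:strong} inductively along the nested intersections $W_n = V_1 \cap \dots \cap V_{n-1}$: that route appears to force proving $\cR_d$-rigidity of each intermediate $G[W_n]$, which becomes circular since intersections of rigid subgraphs need not be rigid in general. Routing the argument through the vertex-deletion inequality together with the global edge/vertex count sidesteps the rigidity-of-intersection issue entirely and reduces everything to a single rank estimate.
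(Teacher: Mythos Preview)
Your argument is correct and tracks the paper's proof closely: both reduce to the case where $G$ is $\cR_d$-rigid via Lemma~\ref{lem:rigiddouble}, identify the intersection of closures with the technicolour set $X$ by Lemma~\ref{lem:tech}, and then use the same edge/vertex double count (summing $|B_i|=d|V_i|-\binom{d+1}{2}+(k-1)$ against $\sum|B_i|=(\ell-1)|E|$ and $\sum|V_i|=(\ell-1)|V|+|X|$) to obtain $d|X|-\binom{d+1}{2}=\ell-k$. The one genuine difference is how the lower bound on the rank of the intersection is established. The paper argues separately that $|X|\geq d$ (via $d$-connectivity of $G$ and a case analysis on the components of $G-X$), which lets it evaluate $r_d(K_X)=d|X|-\binom{d+1}{2}$ exactly. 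You bypass this step with the vertex-deletion inequality $r_d(G)-r_d(G[X])\leq d|V\setminus X|$, giving $r_d(G[X])\geq d|X|-\binom{d+1}{2}$ directly; combined with the upper bound from Theorem~\ref{thm:strong} this pins the rank without ever needing $|X|\geq d$. Your route is slightly slicker and, since $r_d(G[X])\leq r_d(K_X)\leq \ell-k$, it also establishes balancedness in the full matroid $\cR_d$ (not just in $\cR_d(G)$), though you might make that last implication explicit.
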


\begin{proof}
Let $\{A_1,\dots,A_\ell\}$ be the principal partition of $G$ and $X$ be the set of technicolour vertices.
Moreover, we let $G_i=G[D\setminus A_i]$ for $1\leq i \leq \ell$ be the $(k-1)$-fold circuits of $G$ and put $G_i=(V_i,E_i)$.
By the hypothesis that every $(k-1)$-fold circuit is $\cR_d$-rigid, we have $|E_i|=d|V_i|-{d+1 \choose 2}+k-1$, $\cl_d(G_i)$ is complete and $r_d(G_i)=d|V_i|-{d+1 \choose 2}$.
Combined with Lemma \ref{lem:tech}, this implies it will suffice to prove that  
\begin{equation} \label{eq:KX+tight}
r_d\left(\bigcap_{i=1}^\ell \cl_d(G_i)\right) = r_d(K_{X}) = \added{
\begin{cases}
\binom{|X|}{2} & |X| < d \\ d|X|-{d+1\choose 2} & |X| \geq d
\end{cases}
} %\min\left({|X| \choose 2}, d|X|-{d+1\choose 2}\right) \, .
\end{equation}
\added{is equal to $\ell-k$.}

Note that if the principal partition of $G$ has exactly two parts, then $G$ must be a trivial double circuit, 
and the conclusion follows from 
Corollary \ref{cor:trivialstrongalt}.
Hence we may assume $\ell > 2$.
Lemma \ref{lem:rigiddouble} now implies that $G$ is $\cR_d$-rigid, and Lemma \ref{lem:maxk} now gives $|E|=d|V|-{d+1\choose 2}+k$.
\added{Furthermore, Lemma \ref{lem:vertex+separator} gives that $X$ is a vertex separator if $|X| < d$, contradicting that $G$ is $d$-connected by Lemma \ref{lem:rigiddouble}.}
%Note that minimum on the right hand side of \eqref{eq:KX+tight} is $d|X| - {d+1 \choose 2}$ when $|X| \geq d$. We will show this is always the case.
% Suppose $|X|<d$. Observe that every connected component of $G- X$ is 
% monochrome. 
% If $G- X$ had exactly one component, its edge set would be contained in some part $A_1$, and then $G_1=G[E\setminus A_1]$ would be a $(k-1)$-fold $\cR_d$-circuit with
% $V(G_1)\subseteq X$. This is impossible since $|X|<d$.
% Hence $G- X$ has at least two components. The assumption $|X|<d$ now contradicts the fact that $G$ is $d$-connected by Lemma \ref{lem:rigiddouble}.
Hence $|X| \geq d$ and the last equality of \eqref{eq:KX+tight} can be simplified to
\begin{equation} \label{eq:full+rank}
 r_d(K_{X}) = d|X|-{d+1\choose 2} \, .
\end{equation} 

\added{As the principal partition is indeed a partition,} %Proposition \ref{prop:principalpartfork} implies that 
every edge of $G$ is contained in exactly $\ell-1$ $(k-1)$-fold circuits of $G$ and hence $\sum_{i=1}^\ell |E_i|=(\ell-1)|E|$. Thus
\begin{eqnarray*}
d \sum_{i=1}^\ell|V_i|-\ell({d+1 \choose 2}-k+1) &=&  \sum_{i=1}^\ell |E_i| \\
&=& (\ell-1)|E| \\
&=& (\ell-1)(d|V|-{d+1\choose 2}+k) \\
&=& d(\ell-1)|V|-(\ell-1)({d+1 \choose 2}-k).
\end{eqnarray*}
Therefore
\begin{eqnarray*} 
d \sum_{i=1}^\ell|V_i| &=& d(\ell-1)|V|-(\ell-1)({d+1\choose 2}-k)+\ell({d+1\choose 2}-k+1) \\ &=& 
d(\ell-1)|V|+{d+1\choose 2}+\ell-k.
\end{eqnarray*}
Combining the previous equations with Lemma \ref{lem:tech}, we obtain
\begin{eqnarray*} 
d(\ell-1)|V| + d|X| &=& d \sum_{i=1}^\ell|V_i|
=d(\ell-1)|V|+{d+1\choose 2}+\ell-k,
\end{eqnarray*}
and hence
\begin{eqnarray*}
d|X| - {d+1\choose 2} = \ell-k.
\end{eqnarray*}
This gives \eqref{eq:full+rank} and completes the proof of the theorem.
\end{proof}

\begin{remark}
 A fairly straightforward generalisation of the proof shows that Theorem~\ref{thm:balanced+rigid+k+circuit} holds for every abstract $d$-rigidity matroid.
\end{remark}

\subsection{Matroid matchings}\label{sec:matroid+matching}

Let $\cM=(E,r)$ be a matroid on ground set $E$ with rank function $r$.
Let $\cH \subseteq {E\choose 2}$ be a set of (not necessarily pairwise disjoint) pairs of $E$.
A set of pairs $H \subseteq \cH$ is said to be a \emph{matroid matching} of $\cH$ with respect to $\cM$ if $r(\bigcup_{p \in H} p) = 2|H|$.
The \emph{matroid matching problem} is to compute a matroid matching of $\cH$ of maximum size, the size of which is denoted by $\nu(\cH)$.

Lov\'asz's initial interest in double circuits was motivated by applications to the matroid matching problem.
Dress and Lov\'{a}sz \cite{DL} gave a min-max inequality for $\nu(\cH)$ for all matroids $\cM$, and proved that it holds with equality whenever $\cM$ has the double
circuit property.

\begin{theorem}[\cite{DL}]\label{thm:matroid+matching}
Let $\cM=(E,r)$ be a matroid and $\cH \subseteq {E\choose 2}$. Then
\begin{equation} \label{eq:matching+minmax}
    \nu(\cH) \leq \min \left(r(Z)+\sum_{i=1}^t \left \lfloor \frac{r(Z \cup \bigcup_{p \in \cH_i} p) - r(Z)}{2}\right \rfloor \right)
\end{equation}
where the minimum is taken over all flats $Z \subseteq E$ of $\cM$ and for all partitions $\pi = (\cH_1,\cH_2,\dots, \cH_t)$ of $\cH$.
Moreover, if $\cM$ has the double circuit property then \eqref{eq:matching+minmax} holds with equality.
\end{theorem}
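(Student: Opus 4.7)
The plan is to split the argument into the upper bound (valid for all matroids) and the equality statement (under the double circuit property).

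For the upper bound, I would fix any matroid matching $H\subseteq \cH$, any flat $Z$, and any partition $\pi=(\cH_1,\dots,\cH_t)$. Write $H_i = H \cap \cH_i$ and $N_i = \bigcup_{p\in H_i} p$, so that $N = \bigsqcup_i N_i$ is independent of size $2|H|$. I would then classify each matched pair according to how it meets $\cl(Z)$: pairs with both endpoints in $\cl(Z)$ contribute to $r(Z)$, while pairs with at least one endpoint outside $\cl(Z)$ contribute to $r(Z\cup\bigcup\cH_i)-r(Z)$, i.e.~to the rank of $\bigcup \cH_i$ in the contraction $\cM/Z$. Using submodularity of $r$ together with the fact that each pair consists of exactly two elements (the source of the floor function), the $i$-th piece of the partition contributes at most $\lfloor (r(Z\cup\bigcup\cH_i)-r(Z))/2 \rfloor$ pairs that leave $\cl(Z)$. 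Combining these contributions with the $r(Z)$ budget for the ``in-flat'' pairs and summing over $i$ gives the stated inequality.

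For the equality under the double circuit property, I would follow the strategy of Dress and Lov\'asz. Start from a maximum matching $H^*$ and analyse the obstruction to augmenting it. The key step is to show that this obstruction is encoded by a specific double circuit $D$ in $\cM$. Its principal partition $\{A_1,\dots,A_\ell\}$ supplies a candidate partition $\pi^*$ of $\cH$ (grouping pairs according to which principal part carries their rank contribution) and the intersection $\bigcap_{i=1}^\ell \cl(D\setminus A_i)$ supplies a candidate flat $Z^*$. The balancedness identity $r(\bigcap_{i=1}^\ell \cl(D\setminus A_i)) = \ell - 2$ from the double circuit property is precisely the rank equation needed to convert the upper bound into an equality at $(Z^*,\pi^*)$.

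The main obstacle will be the equality case, specifically extracting the right double circuit from a maximum matching and aligning its principal partition with a witnessing partition of $\cH$. This requires an augmenting-path-style analysis that invokes the double circuit structure at each step, using the rank identity to close the gap in the inequality. Without the double circuit property one only obtains the upper bound with no matching lower bound, which is precisely the obstruction that forces some $\cR_d$ to fail the matroid matching property (in view of Theorem~\ref{thm:K_67+dcp}).
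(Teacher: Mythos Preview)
The paper does not prove this statement: Theorem~\ref{thm:matroid+matching} is quoted from Dress and Lov\'asz~\cite{DL} without proof and is used only as a black box in the proof of Proposition~\ref{prop:matroid+matching}. There is therefore no proof in the paper to compare your proposal against.

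That said, your sketch of the upper bound is along standard lines and is fine as an outline. Your sketch of the equality case is much too vague to count as a proof plan: the actual Dress--Lov\'asz argument does not simply ``extract a double circuit from a maximum matching''; it proceeds via an iterative contraction/projection scheme in which the double circuit property is invoked repeatedly to control how the optimum drops, and the witnessing flat $Z$ is built up inductively rather than read off in one step from a single principal partition. In particular, the suggestion that $Z^*=\bigcap_i \cl(D\setminus A_i)$ for one double circuit $D$ already certifies equality is not how the argument works. If you intend to supply a proof, you would need to reproduce the full induction from~\cite{DL} (or the later presentations, e.g.\ Lov\'asz--Plummer), not just the final balancedness identity.
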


Following \cite{DL}, we say that a matroid $\cM$ has the \emph{matroid matching property} if   equality holds in \eqref{eq:matching+minmax} for all $\cH \subseteq {E\choose 2}$.
Hence the second part of Theorem~\ref{thm:matroid+matching} can be rephrased as $\cM$ has the matroid matching property if it has the double circuit property.
In particular Makai's result implies that $\cR_2$ has the matroid matching property.
However the situation is different for $\cR_d$ with $d\geq 3$. 
We saw in Theorem~\ref{thm:K_67+dcp} that $\cR_d$ does not satisfy the double circuit property for any $d\geq 4$.
We next prove the stronger statement that $\cR_d$ does not have the matroid matching property for any even $d\geq 4$.

\begin{proposition}\label{prop:matroid+matching}
$\cR_{2m}(K_n)$ does not have the matroid matching property for all $m \geq 2$ and $n\geq 4m+5$.
\end{proposition}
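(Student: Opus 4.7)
The plan is to exhibit a pair family $\cH$ for which inequality \eqref{eq:matching+minmax} is strict, building on the unbalanced double $\cR_d$-circuit from Theorem~\ref{thm:K_67+dcp}. Set $d = 2m$ and embed $G := K_{d+2,d+3}$ into $K_n$ (valid since $n \geq 2d+5$). By Theorem~\ref{thm:K_67+dcp}, $G$ is a double $\cR_d$-circuit with principal partition $\{A_1, \ldots, A_{d+3}\}$, where each $A_i$ is the star of $d+2$ edges at one vertex of the size-$(d+3)$ side of $G$, and the only $\cR_d$-circuits contained in $E(G)$ are the sets $G_i := E(G) \setminus A_i \cong K_{d+2,d+2}$.

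Since $d = 2m$ is even, $|A_i| = d+2$ is even, so I partition each $A_i$ into $(d+2)/2 = m+1$ disjoint pairs; let $\cH$ be the union of these partitions, giving $|\cH| = (d+3)(m+1)$. As the pairs are mutually disjoint, a set $H \subseteq \cH$ is a matroid matching if and only if $F := \bigcup_{p \in H} p$ is $\cR_d$-independent, i.e., $F$ contains no $G_i$. Equivalently, at most $m$ pairs may be selected from each $A_i$, and this bound is achieved, so $\nu(\cH) = (d+3)m$. Evaluating the right-hand side of \eqref{eq:matching+minmax} at $Z = \emptyset$ with the trivial partition $\pi = \{\cH\}$ yields $\lfloor r_d(E(G))/2 \rfloor = ((d+2)(d+3)-2)/2 = (d+3)(m+1) - 1$, which exceeds $\nu(\cH)$ by $d+2 > 0$.

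To complete the proof it remains to verify that every other choice of $(Z, \pi)$ also gives a right-hand side strictly larger than $\nu(\cH)$. I plan to combine the submodularity estimate $\sum_i \left( r_d(Z \cup \bigcup_{p \in \cH_i} p) - r_d(Z) \right) \geq r_d(E(G) \cup Z) - r_d(Z)$ with $\lfloor x/2 \rfloor \geq (x-1)/2$ to handle partitions with few parts; for fine partitions I will exploit that each pair of $\cH$ lies inside a single star $A_i$, bounding the pair-wise floor contributions from below. The main obstacle will be the resulting case analysis: one must show that any flat $Z$ able to absorb a circuit $G_i$ must have $r_d(Z) \geq (d+2)^2 - 1$, which is too costly to recoup through floor savings. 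The key structural input is the imbalance from Theorem~\ref{thm:K_67+dcp}, namely $r_d(\bigcap_i \cl(G_i)) = 0 < d+1$, which prevents small flats from simultaneously absorbing multiple circuits of $G$.
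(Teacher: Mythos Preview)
Your choice of $\cH$ is exactly the one the paper uses, but your computation of $\nu(\cH)$ is wrong, and this undermines the rest of the argument. You claim that $F$ contains no $G_i$ is equivalent to selecting at most $m$ pairs from each $A_i$. It is not. Since $G_i = E(G)\setminus A_i$, the set $F$ contains $G_i$ precisely when \emph{every} pair outside $A_i$ has been selected, i.e.\ when the omitted pairs all lie inside the single star $A_i$. Thus $F$ is $\cR_d$-independent if and only if the set of omitted pairs is not contained in one $A_i$; it suffices to omit two pairs lying in distinct stars. Hence $\nu(\cH)=|\cH|-2=(d+3)(m+1)-2$, not $(d+3)m$. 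Your ``equivalent'' condition --- that at least one pair is omitted from \emph{every} star --- is much stronger and gives the wrong value.

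With the correct value the gap between $\nu(\cH)$ and the right-hand side at $(Z,\pi)=(\emptyset,\{\cH\})$ is only $1$, not $d+2$. This makes the verification that every $(Z,\pi)$ gives $\alpha(Z,\pi)\geq \nu(\cH)+1$ considerably more delicate than your outline suggests: the submodularity bound $\sum_i \lfloor (r(Z\cup H_i)-r(Z))/2\rfloor \geq (r(Z\cup E(G))-r(Z)-t)/2$ loses $t/2$, which is already too much once $t\geq 2$ and $Z=\emptyset$. The paper handles this with a specific case analysis: first reducing to $Z\subseteq E(G)$ by an augmentation trick (adding a pair $(e,f)$ with $e\in Z\setminus E(G)$ to $\cH$), then splitting according to how many of the sets $H_i\cup Z$ are $\cR_{2m}$-dependent and using that any dependent such set must contain a full $K_{2m+2,2m+2}$. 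Your plan does not yet identify these steps, and the sentence about ``small flats absorbing multiple circuits'' and the imbalance $r_d(\bigcap_i\cl(G_i))=0$ does not translate into the needed inequalities. Fix $\nu(\cH)$ first, then follow the paper's case split; the imbalance of $K_{d+2,d+3}$ is what makes the example work, but it enters only implicitly through the circuit structure, not as a direct rank bound.
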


\begin{proof}
Consider a double circuit $G\cong K_{2m+2,2m+3}$ in $\cR_{2m}(K_n)$ with vertex partition $X=\{x_1,\ldots,x_{2m+2}\}$ and $Y=\{y_1,\ldots,y_{2m+3}\}$. Let $\cH$ be the set of pairs
\[
\cH = \left\{(x_{2i-1}y_j,x_{2i}y_j) \: \Big| \: 1 \leq i \leq m+1 \, , \, 1 \leq j \leq 2m+3 \right\} \, .
\]
Observe that $\cH$ is a partition of $E(G)$ into pairs.
If we remove any pair of edges of  $\cH$ from $G$, the resulting graph
%set of edges 
contains a copy of the $\cR_{2m}$-circuit $K_{2m+2,2m+2}$.
Removing two pairs $\{(x_{2i-1}y_j,x_{2i}y_j), (x_{2i'-1}y_{j'},x_{2i'}y_{j'})\}$ from $G$ with $j \neq j'$ destroys all copies of $K_{2m+2,2m+2}$ in $G$, and hence the resulting graph is $\cR_{2m}$-independent. This implies that $\nu(\cH) = (m+1)(2m+3) -2$.

Let $\alpha(Z, \pi)$ denote the evaluation \added{of the argument} of the right hand side of \eqref{eq:matching+minmax} for a flat $Z$ of $\cR_{2m}(K_n)$ and a partition $\pi$ of $\cH$.
We will show that $\alpha(Z, \pi) > (m+1)(2m+3) -2$ for all choices of $Z$ and $\pi$.

Suppose, for a contradiction, that
\begin{equation}\label{eq:m1}
\alpha(Z, \pi) = (m+1)(2m+3) -2  \mbox{ for some choice of $Z$ and $\pi$.}
\end{equation}
Let $\pi = (\cH_1, \dots, \cH_t)$.
To simplify notation, we put $H_i = \bigcup_{p \in \cH_i} p$ for all $1\leq i\leq t$. 

We first consider the case when there exists an edge $e\in Z\setminus E(G)$.
Choose
$(f,g)\in \cH$ and put $\cH'=\cH+(e,f)$ and $\pi'=\pi+\{(e,f)\}$.
By symmetry we may assume that $e,f$ are not incident with $y_1$. It is straightforward to check that $E(G)-\{x_1y_1,x_2y_1,g\}$ is independent in $\cR_{2m}$. This implies that  $\cH'\setminus \{(x_1y_1,x_2y_1),(f,g)\}$ is a matching in $\cR_{2m}$
and hence $\nu(\cH')\geq (m+1)(2m+3) - 1$. On the other hand 
$$\alpha(Z,\pi')= \alpha(Z,\pi)+\left\lfloor \frac{r_{2m}(Z+f) - r_{2m}(Z)}{2} \right\rfloor=\alpha(Z,\pi)=(m+1)(2m+3) -2.$$ 
This contradicts Theorem \ref{thm:matroid+matching}.

Hence $Z\subseteq  E(G)$. Since $r_{2m}(Z)\leq (m+1)(2m+3) -2$ by \eqref{eq:m1} and every $\cR_{2m}$-circuit in $G$ is a copy of $K_{2m+2,2m+2}$,  $Z$ is $\cR_{2m}$-independent.
Consider the following three cases.
\\[1mm]
Case 1: $H_i \cup Z$ is $\cR_{2m}$-independent for all $1 \leq i \leq t$.
This contradicts \eqref{eq:m1} since it gives
\begin{equation} \label{eq:minmax+all+indep}
\alpha(Z, \pi) \geq \sum_{i=1}^t \left\lfloor \frac{r_{2m}(H_i \cup Z) - r_{2m}(Z)}{2}\right \rfloor = \sum_{i=1}^t \left\lfloor \frac{|H_i|}{2}\right \rfloor = (m+1)(2m+3) \, .
\end{equation}
Case 2: exactly one of the sets $H_i\cup Z$ is $\cR_{2m}$-dependent, say $H_1 \cup Z$.
Since $K_{2m+2,2m+3}$ is a double circuit and $Z\subseteq E(G)$, we have $r_{2m}(H_1 \cup Z) \geq |H_1 \cup Z| - 2$. This again contradicts \eqref{eq:m1} since it gives
\begin{equation}\label{eq:minmax+1+dep}
\alpha(Z, \pi) \geq \sum_{i=1}^t \left\lfloor \frac{r_{2m}(H_i \cup Z) - r_{2m}(Z)}{2}\right \rfloor \geq  \left\lfloor \frac{|H_1|}{2}\right \rfloor - 1 + \sum_{i=2}^t \left\lfloor \frac{|H_i|}{2}\right \rfloor = (m+1)(2m+3) -1 \, .
\end{equation}
Case 3: at least two of the sets  $H_i\cup Z$ are $\cR_{2m}$-dependent, say $H_1 \cup Z$ and $H_2 \cup Z$.
This implies that both $H_1 \cup Z$ and $H_2 \cup Z$ contain a  copy of $K_{2m+2,2m+2}$, and so have $r_{2m}(H_i \cup Z) \geq (2m+2)^2 -1$ for $i = 1,2$.
This again contradicts \eqref{eq:m1} since it gives
\[
\alpha(Z,\pi) \geq |Z| + \sum_{i=1,2} \left\lfloor\frac{r_{2m}(H_i \cup Z) - r_{2m}(Z)}{2} \right\rfloor \geq |Z| + 2\left(\frac{(2m+2)^2 -1 - |Z| -1}{2}\right) \geq (2m+2)^2 -2 \, .
\]
\end{proof}

Our proof of Proposition~\ref{prop:matroid+matching} does not extend to $d$ odd, as we require $X$ to have even cardinality in the construction of $\cH$.
We suspect, however, that the matroid matching property fails for \added{$\cR_d(K_n)$} whenever $d\geq 4$ \added{and $n$ large}.

\begin{conjecture}
    $\cR_{2k+1}(K_n)$ does not have the matroid matching property for all $k \geq 2$ and sufficiently large $n$.
\end{conjecture}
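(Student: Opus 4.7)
The plan is to adapt the proof of Proposition~\ref{prop:matroid+matching}. That argument uses the unbalanced double $\cR_{2m}$-circuit $G=K_{d+2,d+3}$ together with the column pairing $\cH=\{(x_{2i-1}y_j, x_{2i}y_j) : i\in[m+1], j\in[2m+3]\}$. The key structural property is that $|X|=2m+2$ is even, so each $y$-column is perfectly partitioned into pairs. Since the $\cR_d$-circuits of $G$ are precisely the $d+3$ copies of $K_{d+2,d+2}$ obtained by deleting one $y$-column, removing a pair entirely inside column $y_j$ leaves every circuit arising from deleting a column $y_{j'}\neq y_j$ intact, so $\nu(\cH)\leq|\cH|-2$. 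A three-case analysis on $\pi$ then gives $\alpha(Z,\pi)\geq|\cH|-1=\lfloor r(E(G))/2\rfloor$ for all flats $Z$ and partitions $\pi$, producing a gap of one. For $d=2k+1$ with $k\geq2$, the same graph $K_{d+2,d+3}$ is still an unbalanced double $\cR_d$-circuit by the proof of Theorem~\ref{thm:K_67+dcp}, so this broad strategy is available in principle; the difficulty is that $|X|=2k+3$ is now odd, forcing the pairing to be rethought.

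The two most immediate odd-dimensional candidates fail to produce a gap. Pairing by row, $\cH=\{(x_iy_{2j-1},x_iy_{2j}) : i\in[2k+3], j\in[k+2]\}$, exploits the evenness of $|Y|=2k+4$, but every pair then spans two distinct $y$-columns; removing any single pair destroys every $\cR_d$-circuit of $G$, so $\nu(\cH)=|\cH|-1$, which matches the trivial bound $\alpha(\emptyset,\{\cH\})=\lfloor r(E(G))/2\rfloor$. Taking two disjoint copies $G^{(1)}\sqcup G^{(2)}$ of $K_{d+2,d+3}$, pairing within columns of each copy, and pairing the leftover edges across the two copies gives $\nu(\cH)=|\cH|-2$, but $r(E(G))=r(G^{(1)})+r(G^{(2)})=2|\cH|-4$, so the trivial bound again equals $\nu(\cH)$ and the gap collapses. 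In both cases the underlying obstruction is that $\lfloor r(E(G))/2\rfloor$ is realised exactly by some matching of $\cH$, leaving no room for strict inequality in the Dress--Lov\'asz bound.

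The main obstacle is therefore to construct an unbalanced $\cR_{2k+1}$ $k$-fold circuit $G$ and a pairing $\cH$ of $E(G)$ in which the deficiency of $r(\bigcup\cH)$ has the right parity so that $\lfloor r/2\rfloor$ cannot be saturated by a matching, while each individual pair is confined to a small substructure whose deletion fails to break every $\cR_d$-circuit. I would investigate two directions. First, try higher-fold bipartite circuits $K_{d+2,d+m}$ for which $|E|$ is even (equivalently, $m$ odd for odd $d$), paired within blocks of consecutive columns; the conjectural pattern $r_d(K_{d+2,d+m})=(d+2)(d+m)-(m-1)$ would then control the rank deficit and may allow a pairing that drives $\nu$ strictly below $\lfloor r/2\rfloor$. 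Second, consider non-bipartite gadgets formed as graphical parallel connections or 2-sums of copies of $K_{d+2,d+3}$, whose principal partitions are controlled by Propositions~\ref{prop:pckfold} and~\ref{prop:2sumffold}, and whose extra structural rigidity may allow a pairing that bypasses the parity obstacle. Once a suitable gadget and pairing are chosen, the argument should conclude with the same three-case analysis on $(Z,\pi)$ as in Proposition~\ref{prop:matroid+matching}: cases~(i) and~(iii) (all $H_i\cup Z$ independent, or at least two dependent) should generalise routinely, whereas case~(ii), where exactly one class $H_1\cup Z$ is $\cR_d$-dependent, is the delicate point where the structural refinement must supply the crucial extra $+1$ to strictly exceed $\nu(\cH)$.
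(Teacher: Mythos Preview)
The statement is a \emph{conjecture} in the paper, not a theorem; the paper does not prove it and explicitly leaves it open, remarking only that the proof of Proposition~\ref{prop:matroid+matching} ``does not extend to $d$ odd, as we require $X$ to have even cardinality in the construction of $\cH$.'' There is therefore no proof in the paper against which to compare your proposal.

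Your submission is likewise not a proof but a research discussion. You correctly identify the parity obstruction (matching the paper's own remark), you verify that two natural repairs of the pairing---pairing by rows, and taking two disjoint copies with cross-pairs for the leftover edges---fail to produce a gap between $\nu(\cH)$ and the Dress--Lov\'asz bound, and you sketch directions to try next. This is a reasonable and, as far as I can check, technically accurate account of the difficulty, but none of the proposed directions is carried through. In particular, the suggestion to use $K_{d+2,d+m}$ for larger $m$ rests on a ``conjectural pattern'' for $r_d(K_{d+2,d+m})$ that you would first have to establish, and the 2-sum and parallel-connection gadgets are left entirely unspecified. As written, the proposal is an honest explanation of why the problem is open rather than a proof of the conjecture.
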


It remains unclear whether \added{$\cR_3(K_n)$} has the double circuit property or the matroid matching property.

\added{\section{Applications to rigidity}}
\label{sec:coning}

\added{We will use $k$-fold circuits to derive some results in rigidity theory, most notably to analyse the coning operation.}
%\textcolor{red}{We will derive some results in rigidity theory.}

\subsection{The coning operation in rigidity matroids}

\added{We will} show that the cone $G*v$ of a graph $G$ is a $k$-fold $\cR_{d+1}$-circuit if and only if $G$ is a $k$-fold $\cR_d$-circuit. We also obtain  
results on how the principal partitions of $G$ and $G*v$ are related.
We then apply these results to obtain further results on the $d$-dimensional rigidity matroid. 

\begin{lemma}\label{lem:coning-k-circuits-weak}
Let $G*v$ be the cone of a graph $G$ over a new vertex $v$, and $Y$ the set of monochromatic vertices of $G$. Then
\begin{enumerate}[label=(\alph*)]
    \item $G$ is a $k$-fold $\cR_d$-circuit if and only if $G*v$ is a $k$-fold $\mathcal{R}_{d+1}$-circuit.
    \item In addition, if $G$ is a $k$-fold $\cR_d$-circuit with principal partition $\cA$ and $A_i\in \cA$,
then
$$
A_i' = A_i \cup \{uv \mid u\in Y\cap V(G[A_i])\}
$$
is a part of the principal partition of $G*v$ in $\mathcal{R}_{d+1}$.
\end{enumerate}
\end{lemma}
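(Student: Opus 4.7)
The plan for part (a) is to verify the two defining properties of a $k$-fold $\cR_{d+1}$-circuit for $G*v$ directly from the known coning results. The rank equation reduces to a deficiency identity: combining Lemma~\ref{lem:coning+rank} with $|E(G*v)| = |E(G)| + |V(G)|$ gives $|E(G*v)| - r_{d+1}(G*v) = |E(G)| - r_d(G)$, so the deficiency $k$ transfers through coning in either direction. For cyclicity in the forward direction, I note that every edge of $G$ lies in some $\cR_d$-circuit $C \subseteq G$, and $C*v$ is then an $\cR_{d+1}$-circuit of $G*v$ by Lemma~\ref{lem:coning+circuit}; choosing $C$ to pass through a given vertex $u \in V(G)$ (which exists because $G$ is cyclic and has no isolated vertices) simultaneously shows the cone edge $uv$ lies in an $\cR_{d+1}$-circuit. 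For the reverse direction, cyclicity of $G*v$ at any $e \in E(G)$ gives $r_{d+1}((G-e)*v) = r_{d+1}(G*v)$, and the coning rank identity applied to $G-e$ and $G$ immediately yields $r_d(G-e) = r_d(G)$, so $e$ lies in an $\cR_d$-circuit.

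For part (b), my plan is to realise $E(G*v) \setminus A_i'$ as the edge set of a cone and then reduce to part (a) at the $(k-1)$-fold level. Let $H_i$ be the subgraph of $G$ with edge set $B_i := E(G) \setminus A_i$, so the vertex set $V(H_i)$ consists exactly of those vertices of $G$ incident to at least one edge of $B_i$. These are precisely the vertices of $G$ outside $Y \cap V(G[A_i])$, since a vertex $u$ has all its $G$-edges in $A_i$ iff $u \in Y \cap V(G[A_i])$; consequently the cone edges surviving in $E(G*v) \setminus A_i'$ are $\{uv : u \in V(H_i)\}$, giving the identity $E(H_i * v) = E(G*v) \setminus A_i'$. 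By the principal partition assumption, $B_i$ is a $(k-1)$-fold $\cR_d$-circuit, so part (a) applied to $H_i$ shows $E(H_i * v)$ is a $(k-1)$-fold $\cR_{d+1}$-circuit contained in the $k$-fold $\cR_{d+1}$-circuit $E(G*v)$. By the definition of principal partition, $A_i'$ is then a part of the principal partition of $G*v$.

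The main obstacle I anticipate is just the graph/edge-set bookkeeping — specifically, proving cleanly that the cone edges surviving the deletion of $A_i'$ are exactly those incident to $V(H_i)$. Once the identity $(G*v) \setminus A_i' = H_i * v$ is in place, part (b) reduces to a direct invocation of part (a). A minor caveat worth flagging up front is isolated vertices of $G$: they would lift to pendant cone edges in $G*v$ which lie in no $\cR_{d+1}$-circuit and so block cyclicity, so throughout I assume $G$ has no isolated vertices; this convention is standard and is preserved when passing from $G$ to $H_i$.
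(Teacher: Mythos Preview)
Your proposal is correct and follows essentially the same route as the paper: the deficiency identity from Lemma~\ref{lem:coning+rank} handles the rank condition, cyclicity is checked via Lemma~\ref{lem:coning+circuit}, and part (b) is reduced to part (a) by recognising $(G*v)\setminus A_i'$ as the cone $H_i*v$ of the $(k-1)$-fold circuit $H_i=G[D\setminus A_i]$. Your treatment is in fact more careful than the paper's on two points it glosses over: the reverse-direction cyclicity check (where you use the rank identity applied to $G-e$ rather than appealing vaguely to Lemma~\ref{lem:coning+circuit}), and the explicit verification that the surviving cone edges are exactly $\{uv:u\in V(H_i)\}$, including the caveat about isolated vertices.
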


\begin{proof}
Write $G = (V,D)$ and $G*v = (V \cup \{v\}, D')$ where $D' = D \cup \{uv \mid u \in V\}$.
We first note that Lemma~\ref{lem:coning+circuit} immediately gives that $G$ is $\cR_d$-cyclic if and only if $G*v$ is $\cR_{d+1}$-cyclic.
By Lemma~\ref{lem:coning+rank}, we also have
\begin{equation}\label{eq:DD}
|D'|=|D|+|V| \mbox{ and } r_{d+1}(G*v)=r_{d}(G)+|V| \, .
\end{equation}
In particular, we have $r_d(G) = |D| - k$ if and only if $r_{d+1}(G) = |D'| - k$, proving (a).

We next verify (b). Since $A_i\in \cA$, it follows that $G[D\setminus A_i]$ is a $(k-1)$-fold $\cR_d$-circuit.
Hence $G[D\setminus A_i]*v$ is a $(k-1)$-fold $\cR_{d+1}$-circuit by (a). This in turn implies that 
$A_i'=D'\setminus E(G[D\setminus A_i]*v)$ is a part of the principal partition of $G*v$ in $\mathcal{R}_{d+1}$.
\end{proof}

Lemma~\ref{lem:coning-k-circuits-weak} gives us partial information about the principal partition of $G*v$.
The restriction of the principal partition of $G*v$ to $E(G)$ is equal to the principal partition of $G$, and each part $A_i$ of the principal partition of $G$ can be extended to a  part of the principal partition of $G*v$ by adding the edges $uv$ for which $u$ is a monochrome vertex in $G[A_i]$.
We can also deduce that if $u$ is technicolour then $uv$ is added to a new class in the principal partition of $G*v$. It remains to determine how the principal partition of $G*v$ partitions the edges from $v$ to the technicolour vertices of $G$.  
We need to introduce some additional geometric notions to address this problem.

Given a framework $(G,p)$, its rigidity matroid $\cR(G,p)$ is the row matroid of the rigidity matrix $R(G,p)$.
An \emph{equilibrium stress} of $(G,p)$ is a vector $\omega$ in the cokernel of $R(G,p)$.
Equivalently $\omega\in \mathbb{R}^{|E|}$ is an equilibrium stress if, for all $v\in V$, 
\begin{equation}\label{eq:1}
 \sum_{u\in N(v)}\omega_{vu}(p(v)-p(u))=0.   
\end{equation}
Note that $G$ is an $\cR_d$-circuit if and only if, for any generic $p:V\to \RR^d$, there is a unique (up to scaling) equilibrium stress of $(G,p)$ which is non-zero on every edge of $G$.

We can quantify dependencies in $\cR(G,p)$ via equilibrium stresses of $(G,p)$ as follows.
Given an equilibrium stress $\omega$ of $(G,p)$, define the \emph{support} of $\omega$ as 
\[
\supp(\omega) = \{e \in E(G) \mid \omega(e) \neq 0 \} \, .
\]
A subgraph $H \subseteq G$ has $\rank R(H,p) < |E(H)|$ if and only if there exists a linear dependence between the rows of $R(H,p)$ indexed by
the edges of $H$, or equivalently there exists a non-zero equilibrium stress $\omega$ of $(G,p)$ such that $\supp(\omega) \subseteq E(H)$.
Similarly, an edge $e$ of $G$ satisfies $\rank R(G,p) = \rank R(G-e,p)$ if and only if there exists an equilibrium stress $\omega$ of $(G,p)$ such that $e \in \supp(\omega)$.
Combining these facts, $E(H)$ is a cyclic set of $\cR(G,p)$ if and only if there exists an equilibrium stress $\omega_H$ with $\supp(\omega_H) = E(H)$.
Moreover, the circuits of $\cR(G,p)$ are in 1:1 correspondence with the \added{non-zero} equilibrium stresses of minimal support, \added{up to scaling}.

The rank function of the rigidity matroid of any $d$-dimensional framework $(G,p)$ is maximised whenever $p$ is generic and hence 
\[
\rank R(H,p) \leq r_d(H) \quad \mbox{ for all } H \subseteq G \, .
\]
In particular, if $H$ is an $\cR_d$-circuit then $E(H)$ is dependent in $\cR(G,p)$ for all realisations $p$ of $G$ in $\RR^d$.

Our final ingredient is a geometric version of Lemma~\ref{lem:coning+rank} from \cite{Wcone}, see also \added{\cite[Theorem 5.2]{GGJ}}. 
% \ben{\cite{GGJ} cite `Coning, Symmetry and Spherical Frameworks' - Schulze, Whitely. Cite that instead?}
% \tony{was this a ref comment? The lemma is precisely as stated in GGJ, and they were the first to publish a proof of circuits transferring by coning so it's a good ref to give, I don't remember what is in SW and not in Wcone that isn't about the symmetric case and hence off-topic for us?}
We recall that a hyperplane $\cH \subseteq \RR^d$ \emph{separates} two points $p,q \in \RR^d$ if and only if the convex line segment between $p$ and $q$ intersects $\cH$.

\begin{lemma}\label{lem:geometriccone}
Let $G*v$ be the cone of a graph $G$ with new vertex $v$. Let $(G*v,p)$ be a framework in $\mathbb{R}^{d+1}$ and let $\cH$ be a hyperplane in $\mathbb{R}^{d+1}$ which separates $p(V)$ from $p(v)$.
Let $(G,p_\cH)$ be the framework in $\mathbb{R}^{d}$ obtained from $(G,p|_G)$ by projecting $p(u)$ onto the hyperplane $\cH$ along the line through $p(u), p(v)$ for all $u\in V(G)$, and then identifying $\cH$ with $\mathbb{R}^d$.
Then $\mbox{rank } R(G*v,p)=\mbox{rank } R(G,p_\cH)+|V(G)|$.
\end{lemma}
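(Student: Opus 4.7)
My plan is to establish a bijection between the equilibrium stresses of $(G*v, p)$ and the equilibrium stresses of $(G, p_\cH)$, so that the cokernels of the two rigidity matrices have the same dimension. Since $|E(G*v)| = |E(G)| + |V(G)|$, the rank identity then follows from a one-line dimension count.

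First, I would normalise coordinates. As the rank of a rigidity matrix is preserved by affine changes of coordinate, I may assume $p(v) = 0$ and that $\cH = \{x \in \RR^{d+1} : x_{d+1} = 1\}$, identified with $\RR^d$ via the first $d$ coordinates. Writing $p(u) = (\bar p(u), t_u)$ for $u \in V(G)$, the separation hypothesis forces $t_u \neq 0$, and the projection becomes $p_\cH(u) = \bar p(u)/t_u$.

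Next, I would define the correspondence by $\tilde \omega_{uw} = t_u t_w\, \omega_{uw}$ for each edge $uw \in E(G)$. To verify that an equilibrium stress $\omega$ of $(G*v, p)$ gives rise in this way to an equilibrium stress $\tilde \omega$ of $(G, p_\cH)$, I would split the equilibrium condition \eqref{eq:1} for $\omega$ at a vertex $u \in V(G)$ into its last coordinate
\[
\omega_{uv}\, t_u + \sum_{uw \in E(G)} \omega_{uw}(t_u - t_w) = 0
\]
and its first $d$ coordinates. Substituting $\bar p(u) = t_u p_\cH(u)$ and using the last-coordinate equation to eliminate $\omega_{uv}$, the first $d$ coordinates reduce to $\sum_{uw \in E(G)} \omega_{uw} t_w\, (p_\cH(u) - p_\cH(w)) = 0$, which (after rewriting in terms of $\tilde \omega$ and dividing through by $t_u$) is exactly the equilibrium condition for $\tilde \omega$ at $u$. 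The same computation reverses: given $\tilde \omega$, one recovers $\omega_{uw} = \tilde \omega_{uw}/(t_u t_w)$, and the last-coordinate equation uniquely defines each $\omega_{uv}$, yielding the inverse map.

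The main subtlety I expect is verifying that the recovered $\omega$ really is in equilibrium at the apex $v$. This I would handle by summing the equilibrium equations at all vertices $u \in V(G)$: each $G$-edge $uw$ contributes $\omega_{uw}(p(u) - p(w))$ at $u$ and $\omega_{wu}(p(w) - p(u))$ at $w$, so these contributions cancel pairwise and what remains is precisely (the negative of) the equilibrium equation at $v$. Once this bijection is confirmed, equality of cokernel dimensions combined with the edge count completes the proof.
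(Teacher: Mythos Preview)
Your proof is correct. The normalisation is legitimate (rigidity-matrix rank is affine-invariant, and projection from $p(v)$ onto $\cH$ commutes with affine coordinate changes), the algebra eliminating $\omega_{uv}$ via the last-coordinate equation goes through exactly as you describe, and the apex equilibrium follows from summing the vertex equations because the contributions from $G$-edges cancel in pairs. The bijection of stress spaces then gives equal cokernel dimensions, and $|E(G*v)|=|E(G)|+|V(G)|$ finishes the rank identity.

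Note, however, that the paper does not actually give its own proof of this lemma: it is quoted from Whiteley~\cite{Wcone} (see also~\cite{EJNSTW}) as a known geometric refinement of Lemma~\ref{lem:coning+rank}. Your argument is essentially the classical one---Whiteley's original proof also proceeds by exhibiting the stress correspondence $\tilde\omega_{uw}=t_u t_w\,\omega_{uw}$ under central projection from the apex---so there is no methodological divergence to discuss.
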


Our main result is the following.

\begin{theorem}\label{thm:coning-k-circuits-strong}
Let $G$ be a $k$-fold $\mathcal{R}_d$-circuit with principal partition $\cA$, and $u,x$ be two technicolour vertices of $G$. Then $vu,vx$ belong to different parts of the principal partition of the $k$-fold $\mathcal{R}_{d+1}$-circuit $G*v$ whenever
\begin{enumerate}[label=(\alph*)]
    \item there exists $z\in N_G(u)\cap N_G(x)$ such that $uz$ and $xz$ belong to different parts of $\cA$, or
    \item $N_G(u)-x\neq N_G(x)-u$.
\end{enumerate}
\end{theorem}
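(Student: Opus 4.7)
I plan to prove the contrapositive: assume $\{vu,vx\}$ is a cocircuit of $\cR_{d+1}$ restricted to $E(G*v)$---equivalently, by Proposition~\ref{thm:principalpartfork}(b), that $vu$ and $vx$ lie in the same part of the principal partition of $G*v$---and deduce that neither (a) nor (b) can hold. Being a cocircuit is the same as the stress condition that, for a generic realisation $p$ of $G*v$ in $\RR^{d+1}$, the two functionals $\omega\mapsto\omega(vu)$ and $\omega\mapsto\omega(vx)$ are nonzero and proportional on the $k$-dimensional equilibrium stress space of $(G*v,p)$.

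To unpack this, I would use coordinates $p(v)=0$ and $p(y)=\lambda_y(p_\cH(y),1)$ for generic scalars $\lambda_y$, so that Whiteley's coning (Lemma~\ref{lem:geometriccone}) identifies the stress spaces of $(G*v,p)$ and of the projected framework $(G,p_\cH)$ via the rescaling $\tilde\omega(yw)=\lambda_y\lambda_w\,\omega(yw)$. Extracting the last coordinate of the equilibrium at each vertex $y\in V(G)$ and substituting this rescaling yields the explicit formula
\[
\omega(vy) \;=\; \sum_{w\in N_G(y)} \tilde\omega(yw)\left(\frac{1}{\lambda_y^2}-\frac{1}{\lambda_y\lambda_w}\right).
\]
Writing $L_u(\tilde\omega):=\omega(vu)$ and $L_x(\tilde\omega):=\omega(vx)$, the cocircuit condition becomes proportionality of $L_u$ and $L_x$ as linear functionals on the $k$-dimensional stress space $S$ of $(G,p_\cH)$.

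Proportionality of $L_u,L_x$ on $S$ is equivalent to the vanishing of all $2\times 2$ determinants $L_u(\tilde\omega_i)L_x(\tilde\omega_j)-L_u(\tilde\omega_j)L_x(\tilde\omega_i)$ for a basis $\tilde\omega_1,\dots,\tilde\omega_k$ of $S$. Substituting the formula above expands each such determinant as a Laurent polynomial in the $\lambda_y$, with coefficients involving the quadratic forms
\[
\beta^{ij}_{(a,b)} \;:=\; \tilde\omega_i(ua)\tilde\omega_j(xb)-\tilde\omega_j(ua)\tilde\omega_i(xb),
\]
indexed by $a\in N_G(u)$ and $b\in N_G(x)$. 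Since the $\lambda_y$ are generic, each Laurent polynomial vanishes identically, forcing the coefficient of every distinct monomial to vanish. A careful monomial-by-monomial analysis now delivers the two structural consequences we need. First, for any common neighbour $z\in N_G(u)\cap N_G(x)$ the monomial $\lambda_u^{-1}\lambda_x^{-1}\lambda_z^{-2}$ receives a contribution only from the pair $(a,b)=(z,z)$, so $\beta^{ij}_{(z,z)}=0$ for all $i,j$; since $G$ is cyclic, every edge carries a nonzero stress, and this proportionality forces $uz$ and $xz$ to belong to the same part of $\cA$, ruling out (a). Second, for $w\in N_G(u)\setminus(N_G(x)\cup\{x\})$ and any $b\in N_G(x)$ the pair $(w,b)$ sits in the non-degenerate monomial regime and the monomial $\lambda_u^{-1}\lambda_x^{-1}\lambda_w^{-1}\lambda_b^{-1}$ gives $\beta^{ij}_{(w,b)}=0$; this forces $\pi_{uw}$ to be proportional to every $\pi_{xb}$ on $S$, which in turn forces every edge at $x$ to lie in a single part of $\cA$, contradicting that $x$ is technicolour. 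The symmetric argument swapping $u$ and $x$ then yields $N_G(u)-x=N_G(x)-u$, ruling out (b).

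The main obstacle I anticipate is the monomial bookkeeping in the last step, particularly in the degenerate case $ux\in E(G)$: several nominally distinct monomials then coincide (for instance $\lambda_u^{-2}\lambda_x^{-2}$ also picks up the pair $(a,b)=(x,u)$, and pairs with $a=x$ or $b=u$ produce monomials with higher multiplicity in $\lambda_u$ or $\lambda_x$). The key observation that keeps the bookkeeping consistent is that $\beta^{ij}_{(x,u)}=0$ automatically because $ux=xu$ is a single edge of $G$, so the apparent extra contribution cancels. Once these degeneracies are carefully accounted for, the two implications above go through and complete the contrapositive.
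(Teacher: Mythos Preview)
Your argument is correct and takes a genuinely different route from the paper's proof. Both proofs rest on Whiteley's coning correspondence between stresses of $(G*v,p)$ and stresses of $(G,p_\cH)$, but they exploit it in opposite ways.

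The paper works by \emph{degeneration and perturbation}. It starts from the special realisation with all of $G$ in the hyperplane $\cH$ (so every edge $vy$ is an $\cR_{d+1}$-bridge), then perturbs a single neighbour $z$ of $u$ off $\cH$. The equilibrium condition at $u$ forces $vu$ into the support of the perturbed stress; the principal partition of $G$ is then used to locate an edge $tx$ in a different part from $uz$, producing a stress with $\omega(vu)=0$ but $\omega(tx)\neq 0$. In case~(a) one takes $t=z$, so equilibrium at $x$ immediately gives $\omega(vx)\neq 0$; in case~(b) a second perturbation of $t$ off $\cH$ is needed. The proof is geometric and constructs explicit witnessing stresses step by step.

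Your approach instead keeps \emph{all} the scaling parameters $\lambda_y$ generic from the outset, writes $\omega(vy)$ explicitly as a Laurent polynomial in the $\lambda$'s with coefficients in the stress $\tilde\omega$ of $(G,p_\cH)$, and reads the assumption ``$vu,vx$ in the same part'' as an identical vanishing of the $2\times2$ minors $L_u(\tilde\omega_i)L_x(\tilde\omega_j)-L_u(\tilde\omega_j)L_x(\tilde\omega_i)$ in the $\lambda$'s. Coefficient extraction then yields \emph{all} the constraints $\beta^{ij}_{(a,b)}=0$ simultaneously, from which the negations of (a) and (b) fall out cleanly. The monomial bookkeeping you flag is real but manageable: the only potentially colliding monomial when $ux\in E(G)$ comes from $(a,b)=(x,u)$, and as you note $\beta^{ij}_{(x,u)}=0$ automatically; for part~(b) the hypothesis $w\notin N_G(x)$ kills every competing contribution to $\lambda_u^{-1}\lambda_x^{-1}\lambda_w^{-1}\lambda_b^{-1}$.

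Your method is more algebraic and systematic---it harvests the full family of constraints in one pass and could in principle be pushed further (e.g.\ towards the strengthenings in Lemmas~\ref{lem:coning-k-circuits-strong} and~\ref{lem:coning-k-circuits-strong1}). The paper's method is more hands-on and yields concrete stress certificates, which fits naturally with the semicontinuity arguments used elsewhere in Section~\ref{sec:coning}.
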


\begin{proof}
Throughout the proof, we will make frequent use of the fact that matrix rank is lower semicontinuous, i.e.~sufficiently small perturbations of the entries in a matrix cannot decrease its rank.
In particular, if $\rank R(G,p)=r_d(G)$ for some realisation $p$ of $G$ in $\RR^d$  then $\rank R(G,q)=r_d(G)$ for any $q$ within a sufficiently small open neighbourhood of $p$.

\added{Let $\cA'$ denote the principal partition of $G*v$ in $\cR_{d+1}$.}
Consider a framework $(G*v,p)$ where $p$ is chosen to map every vertex of $G$ generically into the hyperplane $\cH = \{x \in \RR^{n+1} \mid x_{d+1}=0\}$ consisting of all points with last coordinate zero, and $p(v)$ to be a generic point in $\mathbb{R}^{d+1}$.
Then $\rank R(G,p|_G)=r_d(G)$ and the hypothesis that $G$ is a $k$-fold $\cR_d$-circuit combined with Lemmas~\ref{lem:coning+rank} and \ref{lem:geometriccone} gives
\begin{align*}
\rank R(G*v,p) &= \mbox{rank } R(G,p|_G) + |V| = r_{d}(G) + |V| = r_{d+1}(G*v)
= |E(G*v)| - k \, .
\end{align*}
It follows that $\cR(G*v,p)$ contains a unique $k$-fold circuit, which we can construct from $E(G*v)$ by removing all its $\cR(G*v,p)$-bridges.
Moreover, since $G$ is a $k$-fold $\cR_d$-circuit and $(G, p|_G)$ is a generic realisation of $G$ in $\mathcal{H}$, $E(G)$ is the unique $k$-fold circuit in \added{$\cR(G*v,p)$}.
This implies, in particular, 
that there exists an equilibrium stress $\omega$ of $(G*v,p)$ in which $\omega(e) \neq 0$ for all $e\in E(G)$.

Choose $z\in N_G(u)-x$.
We next construct a new framework $(G*v,q)$ by perturbing only the $(d+1)$-th coordinate of $p(z)$ within a sufficiently small open neighbourhood so that $\rank R(G*v,q)=\rank R(G*v,p)$. 
Then $\rank R(G*v,q) =r_{d+1}(G*v)= |E(G*v)| - k$ so $\cR(G*v,q)$ contains a unique $k$-fold circuit $F$. 
We can  obtain $F$ from $E(G*v)$ by removing all the $\cR(G*v,q)$-bridges.

We first observe that $E(G) \subseteq F$ for $q$ sufficiently close to $p$.
This follows since the equilibrium stress $\omega$ of $(G*v,p)$ for which $\omega(e) \neq 0$ for all $e\in E(G)$ gives rise to a stress $w_{p'}$ for all realisations $(G*v,p')$ sufficiently close to $p$, such that $\omega_{p'}$ changes continuously as a function of $p'$ and $\omega_p=\omega$. \added{(This in turn follows from the fact that the rank of the rigidity matrix remains constant in a neighbourhood $U$ of $p$. So a basis for
the space of equilibrium stresses of $(G,p')$ can be written so that the coordinates of each basis element are rational functions of $p'$ for any $p'\in U$).}
%\added{see the proof of \cite[Theorem 5]{CW} for a more detailed proof of this claim.}
Hence $\omega_q(e)\neq 0$ for all $e\in E(G)$ when $q$ is sufficiently close to $p$ and we have  $E(G)\subseteq F$.

We next show that $uv \in F$. To see this, note that 
$\omega_q(uz)\neq 0$ by the preceding paragraph. Since $q(N_G(u)+u-z)\subseteq \cH$, the equilibrium stress condition \eqref{eq:1} guarantees that $\omega_q(uv)\neq 0$.
Hence  $uv$ is not a bridge in $\cR(G*v,q)$, and so $uv \in F$.

Let $\cA_F$ be the principal partition of $F$ in $\cR(G*v,q)$.
We will show \added{that} there exists some $t \in N_G(x)$ such that $uv$ and $tx$ are in different parts of $\cA_F$.
We first observe that $uv$ and $uz$ are in the same part of $\cA_F$, as $v$ and $z$ are the only vertices adjacent to $u$ that have a non-zero $(d+1)$-th coordinate in $(G*v,q)$ and hence every circuit of $\mathcal{R}(G*v,q)$ which contains one of $uv,uz$ must contain both of them.
As $x$ is technicolour in 
$G$,
we may pick some edge $tx \in E(G)$ in a different part to $uz$ in the principal partition $\cA$ of $G$. Then  $r_d(G - uz - tx) = r_d(G)$, and
Lemma \ref{lem:geometriccone} gives
\begin{align*}
\rank R(G*v - uz - tx, q) &\geq \rank R(G*v - uz - tx, p) = \rank R(G - uz - tx, p|_G) + |V| \\
&= r_d(G- uz -tx) + |V| = r_{d}(G) + |V| = r_{d+1}(G*v) \, .
\end{align*}
As $\rank R(G*v - uz - tx, q)$ and $\rank R(G*v, q)$ are both at most $r_{d+1}(G*v)$, they are both equal to $r_{d+1}(G*v)$.
Moreover, restricting $\cR(G*v,q)$ to $F$ is equivalent to removing bridges. Hence,
if we put  $H:=(G*v)[F]$, we have
\[
\rank R(H - uz - tx, q) = \rank R(H, q) \, .
\]
This implies that $uz$ and $tx$ are in different parts of the principal partition $\cA_F$, and hence $tx$ is also in a different part to $uv$. 
Thus there exists an equilibrium stress $\hat \omega$ of $(G*v - uv,q)$ such that $\hat \omega(tx) \neq 0$.

Suppose that condition (a) holds. In this case we may assume that the vertex $z$ has been chosen so that $z\in N_G(u)
\cap N_G(x)$ and $uz$ and $xz$ are in different parts of $\cA$, and we may take $t = z$. Then $\hat \omega(xz)\neq 0$ by the preceding paragraph. Since $q(N_G(x)+x-z)\subseteq \cH$, the equilibrium stress condition \eqref{eq:1} guarantees that $\hat\omega(vx)\neq 0$.
Hence  $vx$ is not a bridge in $\cR(G*v-uv,q)$.
This gives 
\[
r_{d+1}(G*v)=\mbox{rank }R(G*v,q)=\mbox{rank }R(G*v - uv - vx,q)=r_{d+1}(G*v-uv-vx).
\] 
Hence $uv,vx$ are in different classes of $\cA'$, as required.

Suppose that condition (b) holds. In this case we can modify our choice of $z$ so that $z\in N_G(u)\setminus N_G(x)$.
Then $t\neq z$ since $z$ is not a neighbour of $x$. We obtain a new framework $(G*v -uv, \tilde{q})$ from $(G*v -uv, \added{q})$ by perturbing only the $(d+1)$-th coordinate of $q(t)$ within a sufficiently small open neighbourhood so that $\rank R(G*v - uv,\tilde q)=\rank R(G*v - uv, q)=r_{d+1}(G*v)$. 
Since $\hat \omega(tx) \neq 0$, $(G*v - uv,\tilde q)$ will have an equilibrium stress $\hat \omega'$ with $\hat \omega'(tx) \neq 0$ for $\tilde q$ sufficiently close to $q$.
Since $\tilde q(N_G(x)+x-t)\subseteq \cH$, the equilibrium stress condition \eqref{eq:1} guarantees that $\hat\omega'(xv)\neq 0$.
Since $\hat \omega'$ is an equilibrium stress of $(G*v - uv,\tilde q)$ we have
$\mbox{rank }R(G*v -uv,\tilde q) =  \mbox{rank }R(G*v -uv-vx,\tilde q).$
Hence 
\[
r_{d+1}(G*v)=\mbox{rank }R(G*v,\tilde q)=\mbox{rank }R(G*v - uv - vx,\tilde q)=r_{d+1}(G*v-uv-vx)
\] 
and therefore $uv,vx$ are in different classes of $\cA'$ as required.
\end{proof}

\begin{figure}
    \centering
    \includegraphics[width=\linewidth]{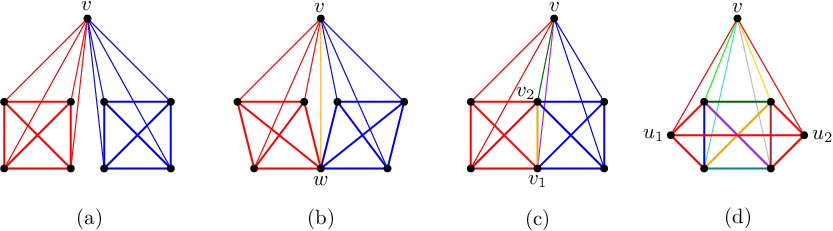}
    \caption{The cones of four double $\cR_2$-circuits.}
    \label{fig:cone+example}
\end{figure}

\begin{example}\label{ex:cones}
Let us illustrate Lemma~\ref{lem:coning-k-circuits-weak} and Theorem~\ref{thm:coning-k-circuits-strong} by coning the four double $\cR_2$-circuits from Example~\ref{ex:dc}.
The resulting graphs are displayed in Figure \ref{fig:cone+example} where $v$ is the added cone vertex.
In Lemma~\ref{lem:coning-k-circuits-weak},
we showed that the cone is a double $\cR_3$-circuit.
In (a), the cone contains exactly two $\cR_3$-circuits, the two copies of $K_5$. So the principal partition of the cone $\{A_1',A_2'\}$ is trivially obtained from $\{A_1,A_2\}$.
In (b), the cone contains two copies of $K_5$ and also a third $\cR_3$-circuit, the double banana $B_{3,2}$, obtained by deleting the edge $wv$.
Hence the principal partition of the cone $\{A_1',A_2',A_3'\}$ extends the partition $\{A_1,A_2\}$ by including a 3rd singleton set $A_3'=\{wv\}$. 
In (c), we obtain the principal partition $\{A_1',A_2',\dots,A_5'\}$ where the two new parts are $A_4'=\{v_1v\}$ and $A_5'=\{v_2v\}$. Thus the edges from the cone vertex $v$ to the two technicolour vertices of the original graph, $v_1$ and $v_2$, belong to different parts of the principal partition of the cone, even though the hypotheses of Theorem~\ref{thm:coning-k-circuits-strong} (a) or (b) are not satisfied. 
Finally in (d), we obtain the principal partition $\{A_1',A_2',\dots, A_{11}'\}$ of the cone from $\{A_1,A_2,\dots, A_{7}\}$ by adding $u_1v,u_2v$ to the part $A_1$ which contains the edges incident to $u_1,u_2$ to form $A_1'$, putting $A_i'=A_i$ for $2\leq i\le 7$ and using the four edges from $v$ to the $K_4$ to form singleton parts $A_8',A_9',A_{10}',A_{11}'$.
\added{Note that the fact $A_8',A_9',A_{10}',A_{11}'$ are all singletons follows from Theorem \ref{thm:coning-k-circuits-strong}(a).}
\end{example}

\begin{remark} \label{rem:coning-counterexample}
Theorem~\ref{thm:coning-k-circuits-strong} may become false if we remove hypotheses (a) and (b).
Consider the double circuit $K_{6,7}$ in $\cR_4$ with bipartition $(X,Y)$ where $|X|=6$ and $|Y|=7$. Recall from Theorem \ref{thm:K_67+dcp} that its principal partition $\{A_1,A_2,\ldots,A_7\}$ is obtained by taking the copies of $K_{1,6}$ centred on the vertices $y_i\in Y$. Thus each vertex in $Y$ is monochromatic and each vertex in $X$ is technicolour. The 
principal partition $\{A_1',A_2',\ldots,A_8'\}$ of its cone $K_{6,7}*v$
 is obtained by putting $A_i'=A_i+vy_i$ for $1\leq i\leq 7$ and taking $A_8'$ to be the set of all edges from $v$ to $X$.
 In particular, observe that $(G*v) \setminus A_8'$ is isomorphic to the $\cR_5$-circuit $K_{7,7}$.
\end{remark}

Remark~\ref{rem:coning-counterexample} demonstrates that Theorem~\ref{thm:coning-k-circuits-strong} may not hold if we remove hypotheses (a) and (b) when $d\geq 4$.
We do not know whether these hypotheses are required when $d=2, 3$ but
the following lemma shows that they are not  required when $d=1$.

\begin{lemma}
\label{lem:coning-k-circuits-strong}
Let $G$ be a $k$-fold \added{$\cR_1$}-circuit, $\cA$ be the principal partition of $G$ and $x,y$ be two technicolour vertices of $G$. Then $vx,vy$ belong to different parts of the principal partition of \added{the $k$-fold $\cR_2$-circuit} $G*v$. %whenever $d=1$.
\end{lemma}
\begin{proof}
Suppose, for a contradiction, that $vx,vy$ belong to the same  part of the principal partition of $G*v$.
Then \added{by Proposition \ref{prop:principalpartfork}(b),}
\begin{equation}\label{eq:same1}
    \mbox{$vy$ is an \added{$\cR_{2}$}-bridge of $G*v-vx$.}
     \end{equation}
In addition, Theorem \ref{thm:coning-k-circuits-strong} gives
%\begin{equation}\label{eq:same2}
    %\mbox{
    $N_G(x)-y=N_G(y)-x=:N$ and $xw,yw$ belong to the same part of $\cA$ for all $w\in N$.
    %}
%     \end{equation}
Since $x$ and $y$ are technicolour vertices of $G$, we also have \added{by Lemma \ref{lem:technicolour+degree} that $|N_G(x)|\geq 3$.}
% \begin{equation}\label{eq:same3}
%     \mbox{$|N_G(x)|\geq d+2$.}
%      \end{equation}
%      Suppose $d=1$. 
    If $xy\not\in E(G)$ then %$|N|\geq 3$ by \eqref{eq:same3} and 
    $G*v$ contains a complete bipartite graph $H$ with bipartition $(\{x,y,v\},\{w_1,w_2,w_3\})$ for any $w_1,w_2,w_3\in N$.
     Then $H+vy$ is an $\cR_2$-circuit in $G*v-vx$, contradicting  \eqref{eq:same1}. Similarly, if $xy\in E(G)$ then $|N|\geq 2$, %by \eqref{eq:same3} 
     and $G*v$ contains a complete bipartite graph $H$ with bipartition $(\{x,y,v\},\{w_1,w_2\})$ for any $w_1,w_2\in N$.
     Then $H+xy+vy$ is an $\cR_2$-circuit in $G*v-vx$, again contradicting  \eqref{eq:same1} and completing the proof of the lemma.
     \end{proof}

We also have the following partial result for $d=2$.

    \begin{lemma}
\label{lem:coning-k-circuits-strong1}
Let $G$ be a double circuit in $\mathcal{R}_2$, $\cA$ be the principal partition of $G$ and $x,y,z$ be three technicolour vertices of $G$. Then $vx,vy,vz$ do not belong to the same part of the principal partition of $G*v$.
\end{lemma}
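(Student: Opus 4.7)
The plan is to argue by contradiction. Suppose that $vx, vy, vz$ all lie in the same part $A^*$ of the principal partition of $G*v$. I would first apply Theorem~\ref{thm:coning-k-circuits-strong} to each of the pairs $(x,y), (y,z), (x,z)$. Since each pair of cone-edges lies in the same part, both hypotheses (a) and (b) of Theorem~\ref{thm:coning-k-circuits-strong} must fail for every pair. Combining across the three pairs forces: $x, y, z$ share a common neighbourhood $N \subseteq V(G) \setminus \{x,y,z\}$; the set $\{x,y,z\}$ induces in $G$ either a triangle or an independent set; and for each $w\in N$ the edges $xw, yw, zw$ all lie in the same part of $\cA$.

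Second, I would adapt the geometric perturbation machinery from the proof of Theorem~\ref{thm:coning-k-circuits-strong}. Realise $(G*v, p)$ in $\RR^3$ with $p(V(G))$ generic inside a hyperplane $\cH$ and $p(v)$ generic off $\cH$, so that $\rank R(G*v, p) = r_3(G*v)$ and the $2$-dimensional stress space of $(G*v, p)$ consists of the lifts of stresses $\omega_0$ of $(G, p|_\cH)$ (with $\omega(vu)=0$ for every $u\in V(G)$). Then simultaneously perturb the out-of-plane coordinates of $p(x)$ and $p(z)$ by small independent amounts $\epsilon_x$ and $\epsilon_z$ respectively. By lower semicontinuity of rank the perturbed framework $(G*v, p')$ still has a $2$-dimensional stress space. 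A direct out-of-plane equilibrium calculation at the vertices $x, y, z$ and at the vertices of $N$ expresses the values $\omega(vx), \omega(vy), \omega(vz)$ of a stress of $(G*v, p')$ as explicit linear functionals of the underlying $2$-dimensional stress $\omega_0$ of $(G, p|_\cH)$; in the independent case these reduce to $\omega(vy)=0$ together with $\omega(vx) \propto \epsilon_x S_x(\omega_0)$ and $\omega(vz) \propto \epsilon_z S_z(\omega_0)$, where $S_u(\omega_0) := \sum_{w\in N_G(u)} \omega_0(uw)$, and in the triangle case the additional terms involving $\omega_0(xz), \omega_0(yz)$ must also be accounted for.

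The crucial step is then to argue that the three resulting linear functionals on the $2$-dimensional stress space of $(G, p|_\cH)$ are not all proportional. Once this is established, the common kernel is trivial, so $(G*v - vx - vy - vz, p')$ has no nontrivial stress, and semicontinuity of rank gives $r_3(G*v-vx-vy-vz) = |E(G*v)|-3$. But by the dual description of the principal partition (Remark~\ref{rem:lattice+restriction}), if $\{vx, vy, vz\}\subseteq A^*$ then three edges contained in the same rank-$1$ flat of $(\cR_3(G*v)|_{G*v})^*$ impose a single-dimensional nullity condition, yielding $r_3(G*v-\{vx,vy,vz\}) = r_3(G*v)-2 = |E(G*v)|-4$. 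This contradiction completes the argument.

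The main obstacle is the non-proportionality claim in the third step. My plan is to combine Makai's theorem (which tells us that $G$ is a balanced double $\cR_2$-circuit) with Lemma~\ref{lem:tech}: from $r_2(\bigcap_i \cl(G-A_i)) = \ell-2$ and $r_2(K_X)=2|X|-3$ one obtains the explicit count $\ell = 2|X|-1$ on the number of parts, where $X$ is the set of technicolour vertices of $G$. If the three functionals were all proportional to a single functional $\phi$ on the stress space, this would amount to an extra linear relation forcing the restrictions of the stress space to the edges at $x, y, z$ to collapse in a specific way; I expect that a careful accounting against the principal-partition structure extracted in step~1 will show that this relation is incompatible with $\ell = 2|X|-1$ whenever $x, y, z$ are three distinct technicolour vertices, thereby completing the proof.
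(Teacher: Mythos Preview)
Your opening reduction via Theorem~\ref{thm:coning-k-circuits-strong} is exactly what the paper does: the three pairwise applications force $N_G(x)\setminus\{y,z\}=N_G(y)\setminus\{x,z\}=N_G(z)\setminus\{x,y\}=:N$, the set $\{x,y,z\}$ induces a triangle or an independent set, and the edges $xw,yw,zw$ lie in a common part of $\cA$ for every $w\in N$. From this point on, however, the two arguments diverge completely.

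The paper does not attempt a general stress-perturbation argument. Instead it observes that the neighbourhood constraints, together with the fact that $G$ is a double $\cR_2$-circuit (so $|E|=2|V|-1$ and every proper circuit is a rigid $\cR_2$-circuit), pin $G$ down to one of a handful of explicit small graphs: either the triangle on $\{x,y,z\}$ with two further common neighbours, or $K_{3,5}$, or a graph built from $K_{3,4}$ in a way that forces $G$ to be disconnected. In each case one checks directly that $(G*v)-\{vx,vy,vz\}$ is $\cR_3$-independent (or derives a contradiction with the existence of technicolour vertices). This classification is short precisely because the hypotheses of Theorem~\ref{thm:coning-k-circuits-strong} failing simultaneously for three technicolour vertices is extremely restrictive in $\cR_2$.

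Your perturbative route has a genuine gap at the step you yourself flag as crucial. First, there is a conflation of stress spaces: a stress $\omega$ of the perturbed framework $(G*v,p')$ restricted to $E(G)$ is \emph{not} a stress of $(G,p|_\cH)$, so there is no well-defined ``underlying $\omega_0$'' on which your functionals $S_x,S_z$ act; at best you are working to first order in $\epsilon_x,\epsilon_z$, and this needs to be made precise. Second, and more seriously, even granting a clean first-order reduction, the heart of the matter becomes the claim that $S_x(\omega_0)=\sum_{w\in N}\omega_0(xw)$ and $S_z(\omega_0)=\sum_{w\in N}\omega_0(zw)$ are linearly independent functionals on the $2$-dimensional stress space of $(G,p|_\cH)$. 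You do not prove this; you only ``expect'' that the count $\ell=2|X|-1$ coming from Makai's theorem and Lemma~\ref{lem:tech} will somehow rule out proportionality. There is no visible mechanism connecting that global enumerative identity to the linear independence of two specific stress-sum functionals at $x$ and $z$, and in fact verifying it seems to require knowing exactly which graph $G$ is --- which is precisely the case analysis the paper carries out.
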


\begin{proof}
Suppose, for a contradiction, that  $vx,vy,vz$ belong to the same  part of the principal partition of $G*v$.
Then
 Theorem \ref{thm:coning-k-circuits-strong} gives
\begin{equation*}%\label{eq:same4}
    \mbox{$N_G(x)-y-z=N_G(y)-x-z=N_G(z)-x-y=:N$}
\end{equation*}
and 
 \begin{equation}\label{eq:same5}
 \mbox{
 $xw,yw,zw$ belong to the same part of $\cA$ for all $w\in N$.
 }
 \end{equation}
Since $x$ and $y$ are technicolour vertices of $G$, we also have \added{by Lemma \ref{lem:technicolour+degree} that }
\begin{equation}\label{eq:same6}
    \mbox{$|N_G(x)|\geq 4$.}
     \end{equation}

Suppose  $xy\in E(G)$. Then $yz,xz\in E(G)$ by \added{Theorem \ref{thm:coning-k-circuits-strong}}, %\eqref{eq:same4}, 
$|N|\geq 2$ by \eqref{eq:same6} and, since $G$ is a double $\cR_2$-circuit, $G$ is the graph obtained from the triangle on $x,y,z$ by adding two vertices $w_1,w_2$ adjacent to each vertex of this triangle. It is easy to check that  $G*v-\{vx,vy,vz\}$ is $\cR_3$-independent, contradicting the hypothesis that $vx,vy,vz$ belong to the same  part of the principal partition of $G*v$.

Hence  $xy\not\in E(G)$. Then $yz,xz\not\in E(G)$ by \added{Theorem \ref{thm:coning-k-circuits-strong}}, %\eqref{eq:same4}  
and \eqref{eq:same6} now implies that \added{$|N|\geq 4$ and} $G$ contains a subgraph $H\cong K_{3,4}$ with bipartition $(\{x,y,z\},\{w_1,w_2,w_3,w_4\})$ for any $w_1,w_2,w_3,w_4\in N$. Since $G$ is a double $\cR_2$-circuit and $K_{3,4}$ is an $\cR_2$-circuit, $E(G)-E(H)$ is a part of $\cA$. 

Suppose $|N|\geq 5$.
\added{The same argument as the previous paragraph along with} the hypothesis that $G$ is a double $\cR_2$-circuit tells us that $|N|=5$ and $G\cong K_{3,5}$. %\tony{Maybe I miss a subtlely but it's easy I think - We get a $w_5$ as in the previous paragraph, giving a $K_{3,5}$ right? Then it's as stated-  $K_{3,5}$ is a double circuit in R2  and we assume G is a double circuit so it can't contain it as a proper subgraph (also we cant have a $w_6$ as it'd give a triple circuit $K_{3,6}$)}. 
This implies $|\cA|=5$ and each part of $\cA$ induces a copy of $K_{1,3}$ centred on a vertex of $N$.
Theorem \ref{thm:coning-k-circuits-strong} now tells us that $\{vx,vy,vz\}$ is a part of the principal partition of $G*v$.
This is impossible since $G*v$ is a double $\cR_3$-circuit and $G*v-\{vx,vy,vz\}\cong K_{4,5}$ is $\cR_3$-independent.

%Hence $|N|=4$ and  $\cA=\{A_1,A_2,A_3,A_4,A_5\}$ where $A_i$ induces a copy of $K_{1,3}$ centred on $w_i$ for all $1\leq i\leq 4$. Since $G-A_i$ is an $\cR_2$-circuit and $|A_i|=3$ for all $1\leq i\leq 4$, each $w_i$ is monochromatic in $G$. This implies that no edge of $A_5$ is incident to $w_i$ for all $1\leq i\leq 4$. Hence $G$ is disconnected so $G$ is a trivial double circuit. This contradicts the hypothesis that $G$ has technicolour vertices.

Hence $|N|=4$. 
\added{Note that $xw,yw,zw$ belong to the same part of $\cA$ for all $w\in N$ by Equation (\ref{eq:same5}).
Since $x,y,z$ are technicolour these parts are all distinct
and since $E(G)-E(H)$ is a part of $\cA$ this implies that} $\cA = \{A_1,A_2,A_3,A_4,A_5\}$ where $A_i$ induces a copy of $K_{1,3}$ centred on $w_i$ for all $1\leq i\leq 4$.
\added{Since $G$ is a double circuit in $\mathcal{R}_2$ that contains technicolour vertices, it is nontrivial and hence $|E(G)|=2|V(G)|-1$. If, for any $1\leq i \leq 4$, $w_i$ has a neighbour of $G$ distinct from $x,y,z$ then $G-A_i$ would be a $\cR_2$-circuit on $2|V(G)|-4$ edges. This would contradict the fact that $\cR_2$-circuits are $\cR_2$-rigid. Hence 
%$|A_i|=3$ for all $1\leq i\leq 4$ and 
each $w_i$ is monochromatic in $G$.}
This implies that no edge of $A_5$ is incident to $w_i$ for any $1\leq i\leq 4$. Hence $G$ is disconnected so $G$ is a trivial double circuit. This contradicts the hypothesis that $G$ has technicolour vertices.
\end{proof}

We end this section by determining the values of $k$ for which the edges incident to the cone vertex can all belong to the same part of the principal partition in the cone of a $k$-fold circuit.

\begin{corollary}\label{cor:partsincidenttocone}
    Let $G$ be a $k$-fold $\cR_d$-circuit and $G*v$ be its cone.
    If all edges adjacent to $v$ are in the same part of the principal partition of $G*v$, then $k=1$.
\end{corollary}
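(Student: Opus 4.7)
I would argue by contradiction, supposing $k \geq 2$ while every edge incident to $v$ lies in a single part $B$ of the principal partition $\cA'$ of $G*v$. The first step shows every vertex of $G$ must be technicolour: Lemma~\ref{lem:coning-k-circuits-weak}(b) exhibits each $A_i' = A_i \cup \{uv : u \in Y \cap V(G[A_i])\}$ as a part of $\cA'$ containing $A_i \subseteq E(G)$. If some vertex $u$ were monochromatic, then $uv \in A_i' \cap B$, forcing $A_i' = B$; but then the description of $A_i'$ would require every vertex of $G$ to be monochromatic in $A_i$, collapsing $\cA$ to the single part $A_i = E(G)$ and contradicting $\ell \geq k \geq 2$.

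Next I apply Theorem~\ref{thm:coning-k-circuits-strong}: since all edges $vu$ lie in the single part $B$, neither hypothesis (a) nor (b) can hold for any pair $u \neq x$ of (technicolour) vertices of $G$. Failure of (b) yields the twin identity $N_G(u) \setminus \{x\} = N_G(x) \setminus \{u\}$ for every pair. I would then show this forces $G \cong K_n$: taking cardinalities in the identity shows $G$ is $\Delta$-regular, and Lemma~\ref{lem:maxk} rules out $\Delta = 0$. If $0 < \Delta < n-1$, picking a neighbour $y$ and a non-neighbour $z$ of some vertex $u$, the twin identities give $N_G(z) = N_G(u)$ (whence $y \sim z$) and $N_G(y) = (N_G(u) \setminus \{y\}) \cup \{u\}$; expanding $N_G(y) \setminus \{z\} = N_G(z) \setminus \{y\}$ then places $u$ on the left-hand side but not on the right-hand side, a contradiction. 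Hence $\Delta = n-1$ and $G = K_n$.

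Finally, in $K_n$ every third vertex $z$ is a common neighbour of any pair $u,x$, so failure of Theorem~\ref{thm:coning-k-circuits-strong}(a) means $uz$ and $xz$ lie in the same part of $\cA$ for every triple of distinct vertices $u,x,z$. Fixing $z$ and varying $u,x$ forces all edges incident to $z$ into a single part of $\cA$, making $z$ monochromatic and contradicting the conclusion of the first step. The technical crux is the twin analysis extracting $G = K_n$ from the condition in (b); the remaining deductions are elementary, so we must have $k = 1$.
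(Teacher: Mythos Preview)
Your proof is correct and uses the same two ingredients as the paper (Lemma~\ref{lem:coning-k-circuits-weak}(b) for the monochromatic case and Theorem~\ref{thm:coning-k-circuits-strong} for the technicolour case), but the technicolour argument is organised differently. The paper never classifies $G$: it simply fixes a pair $u,x$ with a common neighbour $w$, notes that failure of (b) forces every neighbour of $u$ to be either $x$ or a common neighbour of $u$ and $x$, and then uses the fact that $u$ is technicolour to produce an edge at $u$ in a different part from $uw$; applying failure of (a) to the appropriate new pair immediately gives a contradiction. Your route instead extracts the structural conclusion $G\cong K_n$ from the twin condition $N_G(u)\setminus\{x\}=N_G(x)\setminus\{u\}$ before invoking failure of (a). This is a legitimate and self-contained argument (and incidentally, your contradiction in the $0<\Delta<n-1$ step actually arrives one line earlier than you state: the formula $N_G(y)=(N_G(u)\setminus\{y\})\cup\{u\}$ already excludes $z$, contradicting $y\sim z$). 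The paper's approach is shorter and avoids the case analysis on $\Delta$; your approach has the minor advantage of isolating a clean graph-theoretic fact about the twin condition, at the cost of a longer detour.
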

\begin{proof}
    Write $\cA$ for the principal partition of $G$ and $\cA'$ for the principal partition of $G*v$.
    Suppose at least one vertex $u$ of $G$ is \added{monochromatic}, i.e., all edges adjacent to $u$ are contained in the same part $A \in \cA$.
    Let $U \subseteq V(G)$ be the set of monochromatic vertices in this colour class.
    Lemma~\ref{lem:coning-k-circuits-weak} states that $A' \in \cA'$ where $A'$ is the union of $A$ with all edges from $v$ to $U$.
    As all edges adjacent to $v$ are in the same part by assumption, this implies that $U = V(G)$ and hence $\cA$ and $\cA'$ have exactly one part.
    It follows from Proposition~\ref{prop:principalpartfork} that $G$ and $G*v$ are $\cR_d$-circuits and $\cR_{d+1}$\added{-circuits} respectively.
    
    Now assume that all vertices of $G$ are technicolour.
    If all edges adjacent to $v$ are in the same part of $\cA'$, Theorem~\ref{thm:coning-k-circuits-strong} implies that every for pair of vertices $u,x \in V(G)$, we have $N_G(u) -x = N_G(x) - u$ and $uw$ and $xw$ are in same part of $\cA$ for every $w \in N_G(u) \cap N_G(x)$.
    Fix two vertices $u,x \in V(G)$ such that there exists some $w \in N_G(u) \cap N_G(x)$, then $uw, xw$ are in the same part of $\cA$.
    As $u,x$ are technicolour, there either exists $w' \in N_G(u) \cap N_G(x)$ with $uw', xw'$ in a different part to $uw, xw$, or $ux$ is in a different part to $uw, xw$.
    In the former case, we have $uw, uw'$ in different parts despite $u \in N_G(w) \cap N_G(w')$.
    In the latter case, we have $uw, ux$ in different parts despite $u \in N_G(w) \cap N_G(x)$.
    Both cases give a contradiction.
\end{proof}

\subsection{Almost coning}

We will use our coning result for double circuits to determine when the graph obtained by deleting at most two edges from the cone of a double $\cR_d$-circuit is $\cR_{d+1}$-rigid.
We first give new necessary conditions for a graph obtained by deleting $t$ edges from a coned graph to be minimally $\cR_{d+1}$-rigid.

\begin{lemma} \label{lem:necc-main}
Let $G = (V,E)$ be a graph on at least $d$ vertices and let $G'$ be obtained from the cone $G*v$ by deleting $t$ edges incident to $v$. 
Suppose $G'$ is minimally $\mathcal{R}_{d+1}$-rigid. Then $G$ is $\mathcal{R}_d$-rigid and $|E| = d|V|-\binom{d+1}{2}+t$.
Moreover, if $S$ is the set of vertices of $G'$ that are not adjacent to $v$, then
$G-S$ is $\mathcal{R}_{d}$-independent and each $s \in S$ belongs to an $\mathcal{R}_{d}$-circuit in $G$.
\end{lemma}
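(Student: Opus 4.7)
The plan is to exploit Lemma~\ref{lem:coning+rank} together with the defining dimension count of minimal rigidity. The first two claims follow from a straightforward rank/edge count; the statement that $G - S$ is $\cR_d$-independent is immediate once one notices that $G'$ restricted to $(V - S) + v$ is itself a cone. The subtle part is the circuit claim, for which the naive subgraph $G'[(V - s) + v]$ is \emph{not} a cone (since in $G'$ the vertex $v$ is not adjacent to the other vertices of $S$); so one has to argue instead via $G*v - sv$, which inherits rigidity from $G'$.

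For the first three claims, $G'$ being $\cR_{d+1}$-independent and $\cR_{d+1}$-rigid gives $|E(G')| = (d+1)(|V|+1) - \binom{d+2}{2}$. Combining this with $|E(G')| = |E| + |V| - t$ and simplifying via $\binom{d+2}{2} = \binom{d+1}{2} + (d+1)$ yields $|E| = d|V| - \binom{d+1}{2} + t$, proving the edge-count claim. The inclusion $G' \subseteq G*v$ forces $G*v$ to be $\cR_{d+1}$-rigid as well, so Lemma~\ref{lem:coning+rank} gives $r_d(G) = r_{d+1}(G*v) - |V| = d|V| - \binom{d+1}{2}$, and hence $G$ is $\cR_d$-rigid. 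For $G - S$, observe that $G'[(V - S) + v] = (G - S)*v$, because in $G'$ the vertex $v$ is adjacent to precisely the vertices of $V - S$; as a subgraph of the $\cR_{d+1}$-independent graph $G'$ it is $\cR_{d+1}$-independent, and Lemma~\ref{lem:coning+rank} transfers independence to $G - S$.

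Now fix $s \in S$ for the circuit claim. Since $sv \notin G'$ we have $G' \subseteq G*v - sv$, so $G*v - sv$ is $\cR_{d+1}$-rigid, and Lemma~\ref{lem:coning+rank} gives
\[
r_{d+1}(G*v - sv) = r_{d+1}(G*v) = r_d(G) + |V|.
\]
On the other hand, $G*v - sv$ is obtained from the cone $(G - s)*v$ by adjoining the $d_G(s)$ edges of $G$ at $s$, and another application of Lemma~\ref{lem:coning+rank} gives $r_{d+1}((G - s)*v) = r_d(G - s) + (|V| - 1)$. Since adjoining $d_G(s)$ edges increases the rank by at most $d_G(s)$, combining the two rank identities yields
\[
r_d(G) - r_d(G - s) \leq d_G(s) - 1.
\]
Suppose, for contradiction, that no $\cR_d$-circuit of $G$ contains $s$. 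Then every edge of $G$ incident to $s$ is an $\cR_d$-coloop, and since no coloop lies in any circuit of a restriction, such coloops can be deleted additively to obtain $r_d(G - s) = r_d(G) - d_G(s)$. This contradicts the displayed inequality, so some edge at $s$ must lie in an $\cR_d$-circuit of $G$, completing the proof.
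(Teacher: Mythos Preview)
Your argument is correct. The treatment of the first three assertions (the edge count, the $\cR_d$-rigidity of $G$, and the $\cR_d$-independence of $G-S$) is essentially identical to the paper's.

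For the circuit claim, however, you take a genuinely different route. The paper argues by induction on $t$: in the base case $t=1$ it adds back the edge $vs_1$ to $G'$, locates the unique $\cR_{d+1}$-circuit $C'$ in the resulting graph, observes that $C'-v$ is $\cR_d$-dependent (via coning), extracts an $\cR_d$-circuit $C\subseteq C'-v$, and then uses the $\cR_{d+1}$-independence of $G'$ to force $s_1\in V(C)$; the inductive step repeats this after deleting an edge of the new circuit. Your argument is instead a direct rank comparison: you sandwich $G*v - sv$ between $G'$ and $G*v$ to get $r_{d+1}(G*v - sv)=r_d(G)+|V|$, decompose $G*v - sv$ as $(G-s)*v$ together with the $d_G(s)$ edges at $s$, and combine the two to obtain $r_d(G)-r_d(G-s)\le d_G(s)-1$, which is incompatible with every edge at $s$ being a coloop. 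This avoids induction entirely and handles all $s\in S$ uniformly in one stroke, at the cost of being slightly less constructive (the paper's argument actually exhibits the circuit through $s$). One cosmetic point: when you say $G*v - sv$ is obtained from $(G-s)*v$ by ``adjoining the $d_G(s)$ edges'', you are also adjoining the vertex $s$, but since isolated vertices do not affect the rank of the edge set this is harmless.
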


\begin{proof}
Since $G'$ is $\mathcal{R}_{d+1}$-rigid, so is $G*v$.
Thus Lemma \ref{lem:coning+rank} implies that $G$ is $\cR_d$-rigid.
Put $G' = (V \cup \{v\}, E')$. As $G'$ is minimally $\cR_{d+1}$-rigid and $|V| \geq d$, we have $|E'| = (d+1)|V\cup\{v\}|-\binom{d+2}{2}$. Hence,
\begin{align*}
    |E| = |E'|-d_{G'}(v)
        &= (d+1)|V \cup \{v\}|-\left(\binom{d+2}{2}+|V|-t\right) \\
        &= d|V|-\left(\binom{d+2}{2}-(d+1+t)\right) \\
        &= d|V|-\binom{d+1}{2}+t.
\end{align*}
The cone $(G-S)*v$ of $G-S$ is a subgraph of $G'$ and so, as $G'$ is minimally $\cR_{d+1}$-rigid, Lemma \ref{lem:coning+rank} implies that $G-S$ is $\cR_d$-independent. 

\added{We finally show that each $s \in S$ belongs to an $\cR_d$-circuit.
Let $H = \{uv : u \in V - \{s\}\}$ be the set of cone edges other than $sv$.
As $H$ is $\cR_{d+1}$-independent and $G'$ is minimally $\cR_{d+1}$-rigid, we can obtain a minimally $\cR_{d+1}$-rigid graph $H'$ containing $H$ by adding only edges from $G$.
Note that $H'+sv$ is $\cR_{d+1}$-rigid and contains a unique $\cR_{d+1}$-circuit $C'$.
Hence $(C'-v)*v$ is $\cR_{d+1}$-dependent.
Therefore Lemma \ref{lem:coning+circuit} implies $C'-v$ is $\cR_{d}$-dependent and hence $G \cap H'$ contains an $\cR_{d}$-circuit $C$.
Since $H'$ is minimally $\cR_{d+1}$-rigid, $C*v$ is not a subgraph of $H'$ and therefore $s \in V(C)$.}

% Observe that $|S| = t$ and put $S = \{s_1, \dots, s_t\}$.
% In order to show that $s_{i}$ belongs to an $\cR_{d}$-circuit in $G$ for all $1 \leq i \leq t$, we proceed by induction on $t$.
% For the base case, let $t = 1$.
% Note that $G'+vs_{1}$ is $\cR_{d+1}$-rigid and contains a unique $\cR_{d+1}$-circuit $C'$.
% Hence $(C'-v)*v$ is $\cR_{d+1}$-dependent.
% Therefore Lemma \ref{lem:coning+circuit} implies $C'-v$ is $\cR_{d}$-dependent and hence $G$ contains an $\cR_{d}$-circuit $C$.
% Since $G'$ is minimally $\cR_{d+1}$-rigid, $C*v$ is not a subgraph of $G'$ and therefore $s_{1} \in V(C)$. 

% Suppose that there exists $k \geq 1$ such that if $G'$ is minimally $\cR_{d+1}$-rigid and $t \leq k$ then $s_{i}$ belongs to an $\cR_d$-circuit in $G$ for all $1 \leq i \leq t$. Now let $G'$ be minimally $\cR_{d+1}$-rigid and suppose that $t = k+1$. Take $1 \leq i \leq k+1$ and note that $G'+vs_{i}$ is $\cR_{d+1}$-rigid but is not minimally $\cR_{d+1}$-rigid. It follows that there exists a unique subgraph $C$ of $G'+vs_{i}$ that is an $\cR_{d+1}$-circuit. Take $e \in E(C) \cap E(G)$. Then $G'+vs_{i}-e$ is minimally $\cR_{d+1}$-rigid and so, by the induction hypothesis, $s_{j}$ belongs to an $\cR_d$-circuit in $G-e$ for all $1 \leq j \leq k+1$ such that $j \neq i$. Since $i$ was arbitrary, $s_{j}$ belongs to an $\cR_{d}$-circuit in $G$ for all $1 \leq j \leq k+1$. 
\end{proof}

\added{While the conditions in Lemma \ref{lem:necc-main} are necessary for $G'$ to be minimally $\cR_{d+1}$-rigid, they are not sufficient when $t \geq 3$ as the following example demonstrates.}
 Let $H$ be a minimally $\cR_d$-rigid graph on $d+3$ vertices.
Since 
$$ \frac{(d+3)(d+2)}{2}-\left(d(d+3)-{d+1\choose 2}\right)=3, $$
one can obtain the complete graph $K_{d+3}$ from $H$ by adding $3$ edges. Since $K_{d+3}$ is an $\cR_{d+1}$-circuit, the converse to Lemma \ref{lem:necc-main} fails when $t\geq 3$. 
\added{Under some additional assumptions, we will prove the conditions in  Lemma \ref{lem:necc-main} are also sufficient when $t\leq 2$.}

\begin{theorem} \label{thm:maint=2}
Let $G=(V,E)$ be a graph on at least $d$ vertices and let $G'$ be obtained from the cone $G*v$ by deleting $t \leq 2$ edges incident to $v$.
Let $S$ be the set of vertices of $G$ that are not adjacent to $v$.
\added{If $S=\{s_1,s_2\}$ then assume that $N_G(s_1)-s_2\neq N_G(s_2)-s_1$.}
Then $G'$ is minimally $\mathcal{R}_{d+1}$-rigid if and only if
$G$ is $\mathcal{R}_d$-rigid,  $|E| = d|V|-\binom{d+1}{2}+t$,
$G-S$ is $\mathcal{R}_{d}$-independent, and each $s \in S$ belongs to an $\mathcal{R}_{d}$-circuit in $G$. 
\end{theorem}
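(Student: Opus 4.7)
The forward direction is immediate from Lemma~\ref{lem:necc-main}, so the work lies in the converse. For $t=0$ the hypotheses force $G$ to be minimally $\cR_d$-rigid, and Lemma~\ref{lem:coning+rank} gives $G'=G*v$ minimally $\cR_{d+1}$-rigid. For $t=1$, the rank and edge-count conditions force $G$ to have $\cR_d$-corank one, so its cyclic core is a unique $\cR_d$-circuit $C\ni s_1$; by Lemma~\ref{lem:coning+circuit} the cone $C*v$ is an $\cR_{d+1}$-circuit inside $G*v$ containing $vs_1$, making $vs_1$ a non-bridge. Since $|E(G')|$ equals the minimally rigid bound $(d+1)|V(G')|-\binom{d+2}{2}$, this forces $G'$ to be minimally $\cR_{d+1}$-rigid.

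For $t=2$ the edge count of $G'$ again equals the minimally rigid bound, so it suffices to show $G'$ is $\cR_{d+1}$-independent. Let $D\subseteq E(G)$ be the union of all $\cR_d$-circuits of $G$; since $G$ has $\cR_d$-corank $2$, $D$ is a double $\cR_d$-circuit while $G\setminus D$ consists of $\cR_d$-bridges independent of $D$. A short rank computation with Lemma~\ref{lem:coning+rank} shows that the bridges of $G\setminus D$, together with the cone edges $uv$ for $u\in V(G)\setminus V(D)$, are all $\cR_{d+1}$-bridges of $G*v$; hence $D*v$ is the cyclic core of $G*v$. By Lemma~\ref{lem:coning-k-circuits-weak}(a), $D*v$ is a double $\cR_{d+1}$-circuit, so the problem reduces to showing that $vs_1$ and $vs_2$ lie in distinct parts of the principal partition $\cA'$ of $D*v$; indeed, for a double circuit this is equivalent to $D*v-vs_1-vs_2$ being $\cR_{d+1}$-independent.

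Each $s_i\in V(D)$ since $s_i$ lies in an $\cR_d$-circuit, and I split by whether each $s_i$ is monochromatic or technicolour in the principal partition $\cA$ of $D$. If both are monochromatic in the \emph{same} part $A_j$, then $D\setminus A_j$ is an $\cR_d$-circuit avoiding both $s_1,s_2$ and hence contained in $G-S$, contradicting independence of $G-S$; so in the monochromatic case they lie in distinct parts $A_{j_1}\neq A_{j_2}$, and Lemma~\ref{lem:coning-k-circuits-weak}(b) places $vs_1,vs_2$ in the corresponding distinct parts $A'_{j_1},A'_{j_2}$. If exactly one of $s_1,s_2$ is technicolour, the same lemma places the monochromatic one's cone edge inside some $A'_j$, while the technicolour one's cone edge lies in a part of $\cA'$ that is not of the form $A'_i$; again distinct. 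If both are technicolour and any vertex witnessing $N_G(s_1)-s_2\neq N_G(s_2)-s_1$ is joined to the corresponding $s_i$ by an edge of $D$, then hypothesis~(b) of Theorem~\ref{thm:coning-k-circuits-strong} holds for $D$ and we are done.

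The main obstacle is the remaining sub-case in which both $s_1,s_2$ are technicolour, $N_D(s_1)-s_2=N_D(s_2)-s_1$, and the hypothesis on $G$-neighbourhoods is witnessed only by a bridge $s_1z\in G\setminus D$ with $z\notin N_G(s_2)$; here Theorem~\ref{thm:coning-k-circuits-strong} does not apply to $D$ directly. I plan to adapt its geometric stress-perturbation proof to $G*v$: place $G$ generically in a hyperplane $\cH\subset\RR^{d+1}$, place $v$ generically off $\cH$, and perturb the coordinate of $z$ orthogonal to $\cH$. Combined with an equilibrium stress of $D*v$ that is non-zero on every edge of $D*v$, the presence of the bridge $s_1z$ is intended to force an equilibrium stress of $G*v$ non-zero on $vs_1$ but vanishing on $vs_2$; a symmetric construction then produces a stress non-zero on $vs_2$ and zero on $vs_1$, certifying that $vs_1$ and $vs_2$ lie in distinct parts of $\cA'$. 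Once all sub-cases are complete, $D*v-vs_1-vs_2$ is $\cR_{d+1}$-independent, the non-core edges of $G*v$ remain independent of it by the direct-sum decomposition described in the setup, and $G'$ is $\cR_{d+1}$-independent with the minimally rigid edge count, hence minimally $\cR_{d+1}$-rigid.
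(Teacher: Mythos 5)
Your overall strategy is the same as the paper's: necessity via Lemma~\ref{lem:necc-main}, the easy $t\leq 1$ cases, and for $t=2$ a reduction to showing that $vs_1$ and $vs_2$ lie in different parts of the principal partition of the unique double $\cR_{d+1}$-circuit $D*v$ of $G*v$, handled through Lemma~\ref{lem:coning-k-circuits-weak} and Theorem~\ref{thm:coning-k-circuits-strong}. Everything up to and including the sub-case in which condition (b) of Theorem~\ref{thm:coning-k-circuits-strong} is witnessed by an edge of $D$ is correct, and your monochromatic/technicolour case split is a careful expansion of what the paper compresses into one sentence.

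The final sub-case, however, is a genuine gap: you do not prove it, you only announce a plan (``I plan to adapt\dots'', ``is intended to force\dots''), and the mechanism you describe cannot work as stated. The perturbation argument in the proof of Theorem~\ref{thm:coning-k-circuits-strong} activates $\omega(uv)\neq 0$ precisely because the perturbed neighbour $z$ is joined to $u$ by an edge carrying a non-zero equilibrium stress. In your remaining sub-case $s_1z$ is an $\cR_d$-bridge of $G$, hence $s_1z$ is a coloop of $\cR_{d+1}(G*v)$ and therefore also of $\cR(G*v,q)$ (since $\rank R(G*v,q)=r_{d+1}(G*v)$ while ranks of subsets cannot exceed their generic values), so every equilibrium stress of $(G*v,q)$ vanishes on $s_1z$. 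The $(d+1)$-th coordinate of the equilibrium condition \eqref{eq:1} at $s_1$ then reads $\omega(s_1z)\,q(z)_{d+1}+\omega(s_1v)\,q(v)_{d+1}=0$, which forces $\omega(s_1v)=0$ --- the opposite of what you want. So perturbing a bridge-neighbour yields no information, and the sub-case remains open in your write-up. For comparison, the paper closes this case by feeding the hypothesis $N_G(s_1)-s_2\neq N_G(s_2)-s_1$ directly into condition (b) of Theorem~\ref{thm:coning-k-circuits-strong} applied to $D$; you have correctly noticed that condition (b) there concerns neighbourhoods in the double circuit rather than in $G$, but having isolated this point you must either justify that the difference of neighbourhoods can be taken to be witnessed inside $D$, or supply a genuinely different argument --- the one you sketch is not it.
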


\begin{proof}
The necessity is Lemma \ref{lem:necc-main}. 
The case when $t=0$ is Lemma \ref{lem:coning+rank}.

Suppose $t=1$ and put $S=\{s_1\}$. Then $G$ is $\cR_d$-rigid and $|E| = d|V|-{d+1\choose 2}+1$, so $G$ contains a unique $\cR_d$-circuit $C$.
Moreover, as $G-\{s_1\}$ is $\cR_{d}$-independent we must have $s_1 \in V(C)$.
Lemma \ref{lem:coning+circuit} implies that $G*v$ contains a unique $\cR_{d+1}$-circuit $C*v$ and $s_1 \in V(C*v)$.
Hence removing $vs_1$ gives the minimally $\cR_{d+1}$-rigid graph $G'$.

It remains to consider the case when $t=2$. Put $S=\{s_1,s_2\}$. Then $G$ is $\mathcal{R}_d$-rigid and $|E| = d|V|-\binom{d+1}{2}+2$, so $G$ contains a unique double circuit $D$ in $\mathcal{R}_d$, \added{obtained by removing the $\cR_d$-bridges from $G$.
As each $s \in S$ is contained in an $\cR_d$-circuit, it follows that $s_1,s_2\in V(D)$.}
%Furthermore, $s_1,s_2\in V(D)$ as $G-S$ is $\cR_d$-independent.
Lemma \ref{lem:coning-k-circuits-weak} now implies that $G*v$ contains a unique double circuit $D*v$ in $\mathcal{R}_{d+1}$ and $s_1,s_2\in V(D*v)$. 
It remains to show that $vs_1,vs_2$ are in different parts of the principal partition $\cA'$ of $D*v$. 
Suppose $vs_1,vs_2$ are in the same class $A'$ of $\cA'$. 
Since $N_G(s_1)-s_2\neq N_G(s_2)-s_1$,
Theorem \ref{thm:coning-k-circuits-strong} now implies that $s_1,s_2$ are monochrome and all edges incident to $s_1,s_2$ are in the same class of the principal partition of $D$.
This contradicts the hypothesis that $G-S$ is $\cR_d$-independent.
\end{proof}

\begin{remark}
Theorem~\ref{thm:maint=2} may no longer hold if we drop the hypothesis $N_G(s_1)-s_2\neq N_G(s_2)-s_1$.
Recall the flexible double circuit $K_{6,7}$ in $\cR_4$ with bipartition $(X,Y)$ where $|X| = 6$ and $|Y|=7$.
It has two degrees of freedom, so \added{for $y_1, \dots y_4 \in Y$ we have} $G = K_{6,7} + \added{\{y_1y_2, y_3y_4\}}$ is $\cR_4$-rigid where the two new edges are bridges.
Setting $S = \{x_1,x_2\} \added{\subset X}$, we have \added{that} $G - S$ is $\cR_4$-independent.
However, the graph $G'$ obtained from $G*v$ by removing $vx_1$ and $vx_2$ is not $\cR_5$-independent.
This follows as $K_{6,7}*v$ is a double $\cR_5$-circuit, and Remark~\ref{rem:coning-counterexample} computes that $vx_1$ and $vx_2$ are contained in the same part of the principal partition, hence $G'$ contains a $\cR_5$-circuit.
\end{remark}

\begin{example}\label{ex:coning-3-fold}
    One cannot extend Theorem~\ref{thm:maint=2} to $t=3$ using our methods alone, as the following example demonstrates.
    Consider the $3$-fold circuit $G$ in $\cR_2$ and its cone $G*v$ depicted in Figure~\ref{fig:conepartitionexample}.
    The principal partition of $G$ is $\{A_1, A_2, \dots, A_{10}\}$ where $A_1$ is the 5 edges incident to $x$ and $y$, and $A_2,\dots, A_{10}$ are singleton classes.
    Theorem~\ref{thm:coning-k-circuits-strong} implies $G*v$ is a $3$-fold $\cR_3$-circuit, and that its principal partition is formed by taking $A_1$ and appending $xv,yv$, then the remaining edges incident to $v$ are all singleton classes.

    Letting $S = \{a,b,c\}$, we see that $G$ is $\cR_2$-rigid, $G-S$ is \added{$\cR_2$}-independent and each $s \in S$ belongs to a $\cR_2$-circuit, satisfying the conditions of Theorem~\ref{thm:maint=2}.
    However $G':= G*v - \{av, bv, cv\}$ is the double banana and hence not $\cR_{3}$-rigid.
    The issue is that while removing any two of $\{av,bv,cv\}$ does not decrease the rank, removing all three does decrease the rank.
    In the language of $k$-fold circuits, each is a singleton class in the principal partition of the $3$-fold circuit $G*v$.
    However, in the double circuit $G' - {av}$, the edges $bv, cv$ are now in the same part of its principal partition.
\end{example}

Example~\ref{ex:coning-3-fold} highlights that extending Theorem~\ref{thm:maint=2} to $t > 2$ requires understanding of all $k$-fold circuits contained in a $t$-fold circuit for all $k \leq t$, rather than just the principal partition.
By~\cite[Lemma 3.5]{JNS}, these $k$-fold circuits form a lattice isomorphism to the lattice of flats of a rank $t$ matroid.
When $t=2$, this lattice is very simple: the proper flats are exactly the parts of the principal partition.
Even for $t=3$, this lattice can be highly complex.
For example, for a $3$-fold circuit whose principal partition has $\ell = 12$ parts, there are over 28 million non-isomorphic lattice structures one could have, and the number is unknown for $\ell > 12$~\cite{BBJKL}.
As such, extending Theorem~\ref{thm:maint=2} to $t > 2$ seems to require techniques beyond the methods of this article.

\begin{figure}[ht]
\begin{center}
\begin{tikzpicture}[scale=0.8]

\draw[black,thick]
(0,0) -- (2.5,3) -- (2.5,1) -- (1,2) -- (2.5,3) -- (0,4) -- (-2.5,3) -- (-2.5,1) -- (-1,2) -- (-2.5,3) -- (0,0) -- (2.5,1) -- (0,4) -- (1,2) -- (0,0) -- (-1,2) -- (0,4) -- (-2.5,1) -- (0,0);

\draw[black,thick]
(1,2) -- (-2.5,3) -- (2.5,3);

\draw[black,thick]
(-2.5,3) -- (2.5,1);

\filldraw (0,0) circle (3pt)node[anchor=north]{$z$};
\filldraw (0,4) circle (3pt)node[anchor=south]{$w$};

\filldraw (2.5,3) circle (3pt)node[anchor=west]{$a$};
\filldraw (2.5,1) circle (3pt)node[anchor=west]{$b$};
\filldraw (1,2) circle (3pt)node[anchor=east]{$c$};

\filldraw (-2.5,3) circle (3pt)node[anchor=east]{$v$};
\filldraw (-2.5,1) circle (3pt)node[anchor=east]{$x$};
\filldraw (-1,2) circle (3pt)node[anchor=west]{$y$};

%%%%%

\draw[black,thick]
(7,0) -- (9.5,3) -- (9.5,1) -- (8,2) -- (9.5,3) -- (7,4);

\draw[black,thick]
(4.5,1) -- (6,2);

\draw[black,thick]
(7,0) -- (9.5,1) -- (7,4) -- (8,2) -- (7,0) -- (6,2) -- (7,4) -- (4.5,1) -- (7,0);

\filldraw (7,0) circle (3pt)node[anchor=north]{$z$};
\filldraw (7,4) circle (3pt)node[anchor=south]{$w$};

\filldraw (9.5,3) circle (3pt)node[anchor=west]{$a$};
\filldraw (9.5,1) circle (3pt)node[anchor=west]{$b$};
\filldraw (8,2) circle (3pt)node[anchor=east]{$c$};

\filldraw (4.5,1) circle (3pt)node[anchor=east]{$x$};
\filldraw (6,2) circle (3pt)node[anchor=west]{$y$};

\end{tikzpicture}
\end{center}
\caption{The left graph $G*v$ is the cone of the right graph $G$, a 3-fold $\cR_2$-circuit.}
\label{fig:conepartitionexample}
\end{figure}
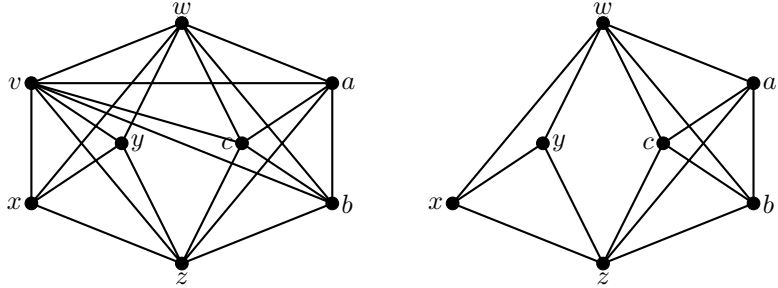

\subsection{Adding edges to $\cR_d$-independent graphs}

As a second application of Theorem \ref{thm:coning-k-circuits-strong} we have the following.

\begin{corollary}\label{cor:addingedges}
Let $G$ be obtained from an $\cR_d$-independent graph by adding at most 2 edges. Then $G$ is $\cR_{d+1}$-independent.    
\end{corollary}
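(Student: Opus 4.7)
The plan is to exhibit $G=H+e_1+e_2$ as a subgraph of a minimally $\cR_{d+1}$-rigid graph, from which $\cR_{d+1}$-independence of $G$ follows by heredity. For small $|V(H)|\leq d+2$, the complete graph $K_{V(H)}$ is itself $\cR_{d+1}$-independent (having $\binom{|V(H)|}{2}$ edges, exactly its $\cR_{d+1}$-rank in this range) and contains $G$, so I may assume $|V(H)|\geq d+3$. Then I would extend $H$ to a minimally $\cR_d$-rigid graph $\bar H$ on $V(H)$. Since $G\subseteq \bar H+e_1+e_2$, it suffices to prove the latter is $\cR_{d+1}$-independent, and we may assume $e_1, e_2\notin E(\bar H)$ are distinct (otherwise fewer new edges are genuinely being added).

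When only one new edge is added, $G':=\bar H+e_1$ is $\cR_d$-rigid with a unique $\cR_d$-circuit $C$; for any $s\in V(C)$ the hypotheses of Theorem~\ref{thm:maint=2} with $t=1$ are easily verified (uniqueness of $C$ yields $\cR_d$-independence of $G'-s$), so the theorem produces $(G')*v-vs$ as minimally $\cR_{d+1}$-rigid and containing $G$. For the main case where both $e_1, e_2\notin \bar H$, the graph $G':=\bar H+e_1+e_2$ is $\cR_d$-rigid with $|E(G')|=d|V|-\binom{d+1}{2}+2$, and its cyclic part in $\cR_d$ is a double circuit $D$ containing both $e_1$ and $e_2$. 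Letting $C_i$ denote the unique $\cR_d$-circuit of $\bar H+e_i$, each $C_i$ is a circuit of $G'$, so I would pick distinct $s_1\in V(C_1)$ and $s_2\in V(C_2)$ incident to edges of their respective circuits, hoping that $G'-\{s_1, s_2\}$ is $\cR_d$-independent. If such a pair can be chosen, Theorem~\ref{thm:maint=2} with $t=2$ yields $(G')*v-vs_1-vs_2$ as minimally $\cR_{d+1}$-rigid, containing $G$.

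The hard part will be satisfying the neighborhood condition $N_{G'}(s_1)-s_2\neq N_{G'}(s_2)-s_1$ required by Theorem~\ref{thm:maint=2}, together with the independence of $G'-\{s_1,s_2\}$, in all configurations of $D$. I expect to resolve this by a case analysis of the double circuit $D$: whether it is trivial (direct sum of two circuits, where $s_1, s_2$ naturally live in disjoint parts) or non-trivial with at least three parts in its principal partition, and according to the connectivity of $D$ and the positions of $C_1, C_2$ inside it. In the exceptional symmetric situations where all naive choices of $s_1, s_2$ behave as ``twins,'' one should be able to adjust the choice using the structural results on double circuits from Proposition~\ref{prop:2technicolour}, or modify the extension $\bar H$, to locate an admissible pair.
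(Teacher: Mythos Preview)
Your approach is genuinely different from the paper's and contains a real gap that you yourself identify as ``the hard part.''

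The paper never passes through Theorem~\ref{thm:maint=2}. Instead it reduces directly to the case where $G$ itself is a double $\cR_d$-circuit (by observing that added bridges can be absorbed into the independent part and then stripping off the bridges of $G$), cones to obtain a double $\cR_{d+1}$-circuit $G*v$, and invokes Corollary~\ref{cor:partsincidenttocone}: the edges incident to $v$ cannot all lie in a single part of the principal partition of $G*v$ when $k\geq 2$. Picking two such edges in different parts and deleting them yields an $\cR_{d+1}$-independent supergraph of $G$. No neighbourhood condition, no choice of $s_1,s_2$, no extension to a minimally rigid $\bar H$, and no case analysis.

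Your route via Theorem~\ref{thm:maint=2} forces you to locate vertices $s_1,s_2$ with $G'-\{s_1,s_2\}$ $\cR_d$-independent \emph{and} $N_{G'}(s_1)-s_2\neq N_{G'}(s_2)-s_1$. The first condition already requires that every circuit of the double circuit $D$ meets $\{s_1,s_2\}$, which is not guaranteed merely by taking $s_i\in V(C_i)$ once $D$ has more than two parts in its principal partition. The second condition is precisely the hypothesis that the Remark following Theorem~\ref{thm:maint=2} shows cannot simply be dropped. You propose to handle the exceptional ``twin'' configurations by a case analysis on the structure of $D$, possibly adjusting $\bar H$; this may well be feasible, but it is substantial work that you have not carried out, and it amounts to redoing (in a more constrained setting) the content of Corollary~\ref{cor:partsincidenttocone}. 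The paper's route avoids the entire difficulty because Corollary~\ref{cor:partsincidenttocone} is proved from Theorem~\ref{thm:coning-k-circuits-strong} without any neighbourhood hypothesis and applies to an arbitrary double circuit.
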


\begin{proof}
Let $G*v$ denote the cone of $G$ with new vertex $v$.
Since $G$ is a subgraph of $G*v$, the cases when no edges are added and 1 edge is added follow immediately from Lemmas \ref{lem:coning+rank} and \ref{lem:coning+circuit} respectively.
Suppose 2 edges are added: if either \added{is} an $\cR_d$-bridge then this reduces to the case where 1 edge is added.
Hence we may assume that neither edge is an $\cR_d$-bridge. Then $G$ contains a unique double $\cR_d$-circuit.
By Lemma \ref{lem:coning+rank} \added{$\mathcal{R}_d$-bridges are also $\mathcal{R}_{d+1}$-bridges, hence} it suffices to consider the case when $G$ is a double $\cR_d$-circuit. Lemma \ref{lem:coning-k-circuits-weak} implies that $G*v$ is a double $\cR_{d+1}$-circuit.
Furthermore, Corollary \ref{cor:partsincidenttocone} implies that there exists two edges incident to $v$ in different parts of the principal partition of $G*v$.
Removing these edges gives a $\cR_{d+1}$-independent graph containing $G$.
\end{proof}

Note that if $G$ is obtained by adding 3 edges to an $\cR_d$-independent graph then $G$ may not be $\cR_{d+1}$-independent. For example we can form a subgraph isomorphic to $K_{d+3}$ by adding 3 edges to an $\cR_d$-independent graph.

\subsection{$X$-replacement}

A graph $G'$ is said to be obtained from another graph $G$ by a \emph{($d$-dimensional) $X$-replacement} if there exists non-adjacent edges $uv$ and $xy$ in $G$ such that $G'$ is $G - \{uv, xy\}$ plus an additional vertex $w$ of degree $d+2$ adjacent to $u,v,x,y$.
%    \item \arxiv{a \emph{($d$-dimensional) $V$-replacement} if there exists adjacent edges $xy$ and $yz$ in $G$ such that $G'$ is $G - \{xy, yz\}$ plus an additional vertex $w$ of degree $d+2$ adjacent to $x,y,z$.}
See Figure \ref{fig:xvpic} for an illustration of this operation in the case when $d=3$.

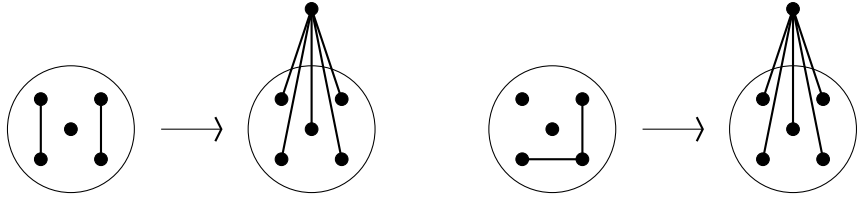
\begin{figure}
\begin{center}
\begin{tikzpicture}[scale=0.8]

\draw (0,0) circle (30pt);
\draw (4,0) circle (30pt);

\filldraw (4,2) circle (3pt);

\filldraw (.5,-.5) circle (3pt);
\filldraw (.5,.5) circle (3pt);
\filldraw (4.5,-.5) circle (3pt);
\filldraw (4.5,.5) circle (3pt);

\filldraw (-.5,-.5) circle (3pt);
\filldraw (-.5,.5) circle (3pt);
\filldraw (3.5,-.5) circle (3pt);
\filldraw (3.5,.5) circle (3pt);

\filldraw (0,0) circle (3pt);
\filldraw (4,0) circle (3pt);

\draw[black,thick]
(.5,.5) -- (.5,-.5);

\draw[black,thick]
(-.5,.5) -- (-.5,-.5);

\draw[black,thick]
(4.5,-.5) -- (4,2) -- (4.5,.5);

\draw[black,thick]
(3.5,-.5) -- (4,2) -- (4,0);

\draw[black,thick]
(3.5,.5) -- (4,2);

\draw[black]
(1.5,0) -- (2.5,0);

\draw[black,thick]
(2.4,-.2) -- (2.5,0) -- (2.4,.2);

\end{tikzpicture}
\caption{Illustration of 3-dimensional $X$-replacement.}
\label{fig:xvpic}
\end{center}
\end{figure}

\added{Tay and Whiteley \cite{TW} conjectured that the $X$-replacement operation preserves minimal $\cR_3$-rigidity.}

\begin{conjecture}\label{con:xrep}
Suppose $G$ is minimally $\cR_3$-rigid, and $G'$ is obtained from $G$ by a $3$-dimensional $X$-replacement.
Then $G'$ is minimally $\cR_3$-rigid.
\end{conjecture}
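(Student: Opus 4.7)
The plan is to reduce the conjecture to a statement about the principal partition of a double $\cR_3$-circuit, using the machinery of Sections \ref{sec:kfold_in_rigidity_matroids} and \ref{sec:coning}. Write $G' = (G - \{uv, xy\}) + w$, where the new vertex $w$ has degree $5$ with neighbours $u, v, x, y$ and one further vertex $z \in V(G)$. A direct count gives $|E(G')| = 3|V(G')| - 6$, so by Lemma~\ref{lem:max}, $G'$ is minimally $\cR_3$-rigid if and only if $G'$ is $\cR_3$-independent.

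First, I would consider the auxiliary graph $H = G + w$ obtained by adding $w$ together with its five incident edges but \emph{retaining} $uv$ and $xy$, so that $G' = H - \{uv, xy\}$. One computes $|E(H)| = 3|V(H)| - 4$. Since $G$ is minimally $\cR_3$-rigid, a $0$-extension adding $w$ with three of its five neighbours yields an $\cR_3$-rigid graph by Lemma~\ref{lem:01ext}, and adding the remaining two edges to $w$ preserves rigidity. Hence $H$ is $\cR_3$-rigid with $|E(H)| - r_3(H) = 2$, and consequently the graph obtained from $H$ by deleting all its $\cR_3$-bridges is a double $\cR_3$-circuit $D$. Using Proposition~\ref{thm:principalpartfork}, $G'$ is $\cR_3$-independent precisely when both $uv$ and $xy$ are non-bridges of $H$ (equivalently, edges of $D$) and they lie in different parts of the principal partition of $D$. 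The conjecture therefore reduces to: for every minimally $\cR_3$-rigid graph $G$ and every legal X-replacement, this separation property holds for the associated double circuit $D$.

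The main obstacle is precisely this separation step, which is essentially the conjecture itself in matroidal disguise. Any purely matroidal or abstract-rigidity argument is doomed, since the same reduction works verbatim for every $d \geq 3$ but the X-replacement conjecture is false for $d \geq 4$; any successful argument must use low dimension in a genuinely geometric way. A partial route, presumably the source of the ``special case'' referenced in the abstract, invokes the coning framework of Section~\ref{sec:coning}: if $w$ is arranged to play the role of a cone vertex over an almost-spanning subgraph, then Theorem~\ref{thm:maint=2} applies, and the separation of $uv$ and $xy$ in the principal partition is guaranteed by the hypothesis $N_G(u) - v \neq N_G(v) - u$ combined with Theorem~\ref{thm:coning-k-circuits-strong}, using that $\cR_2$ enjoys the double circuit property by Makai \cite{Mak}. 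Extending this to a generic X-replacement, where $w$ has only $d+2 = 5$ neighbours rather than nearly all vertices of $G$, would require a structural classification of double $\cR_3$-circuits containing a prescribed degree-$5$ vertex. This appears out of reach with the current tools, which is presumably why the full conjecture remains open.
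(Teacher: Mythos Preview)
The statement is an open conjecture, and the paper does not prove it. Your proposal correctly recognises this and offers essentially the same reformulation the paper gives immediately after stating the conjecture: form $H = G + w$, observe that $H$ is $\cR_3$-rigid with exactly two excess edges so contains a unique double $\cR_3$-circuit $D$, note that $w$ is technicolour in $D$ (since $D - w \subseteq G$ is independent), and conclude that $G'$ is minimally $\cR_3$-rigid if and only if $uv$ and $xy$ lie in different parts of the principal partition of $D$. This is precisely the content of the paper's Conjecture~\ref{con:doubleX}, and your remark that any proof must be genuinely low-dimensional (since the analogous statement fails for $d \geq 4$) matches the paper's observation following Lemma~\ref{lem:xrepcases}.

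One small correction: your guess about the ``special case'' mentioned in the abstract is off. The paper's special case is Corollary~\ref{cor:xrep}, which shows that a $(d+1)$-dimensional $X$-replacement applied to an $\cR_d$-independent graph yields an $\cR_{d+1}$-independent graph. This is proved not via Theorem~\ref{thm:maint=2} and the almost-coning machinery, but via Corollary~\ref{cor:addingedges}: perform a $d$-dimensional $1$-extension first (preserving $\cR_d$-independence by Lemma~\ref{lem:01ext}), then add the two remaining edges at $w$ and invoke the fact that adding at most two edges to an $\cR_d$-independent graph stays $\cR_{d+1}$-independent. The paper also proves partial information toward Conjecture~\ref{con:doubleX} in Lemma~\ref{lem:xrepcases}, showing for instance that the five edges $wu_i$ at a technicolour degree-$5$ vertex all lie in distinct parts of the principal partition; but this falls short of separating $uv$ from $xy$.
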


%It is not hard to see that $V$-replacement does not always preserve minimal $\cR_3$-rigidity. \arxiv{However, the double-V-conjecture \cite{TW} gives a stronger statement which may preserve minimal $\cR_3$-rigidity.}

%\begin{conjecture}\label{con:vrep}
%Suppose $H\cup \{e,f\}$ and $H\cup \{e',f'\}$ are minimally $\cR_3$-rigid for pairs of adjacent edges $e,f$ and $e',f'$ such that the common endvertex of $e,f$ is distinct from the common endvertex of $e',f'$. Let $G$ be the graph obtained from $H$ by adding a new vertex $v$ of degree 5 such that $N(v)$ contains the endvertices of $e,f,e',f'$. Then the double V-conjecture claims that $G$ is minimally $\cR_3$-rigid.
%\end{conjecture}

%Conjectures \ref{con:xrep} and \ref{con:vrep} are of key importance in the study of $\mathcal{R}_3$-rigidity since they would imply that all minimally $\cR_3$-rigid graphs can be generated recursively by these local operations alongside 0- and 1-extension; \added{see \cite[Section 4]{CJJT} for more details.}

\added{The fact that X-replacement preserves independence in the $C_2^1$-cofactor matroid is a key component in the characterisation of this matroid in \cite{CJT0,CJT1}, so a resolution of this conjecture would be a big step towards deciding whether $C_2^1=\cR_3$.} 
We show that the $X$-replacement conjecture can be stated in terms of double circuits. 
Suppose $G$ is minimally $\cR_3$-rigid and $e=uv$ and $f=xy$ are two non-adjacent edges of $G$.
Let $H$ be obtained from $G$ by adding a vertex $w$ of degree 5 joined to $u,v,x,y$ and one other vertex $z$.
Then $H$ contains a unique double circuit $D$ in $\cR_3$ and $w$ is a technicolour vertex of $D$ since $D-w\subseteq G-w$ is  $\cR_3$-independent.
In addition $H-e-f$ is minimally $\cR_3$-rigid if and only if $e,f$ belong to different sets in the principal partition of $D$.
Hence the $X$-replacement conjecture can be restated as follows.

\begin{conjecture}\label{con:doubleX}
 Let $D$ be a double $\cR_3$-circuit and $w$ be a technicolour vertex of degree 5 in $D$.
 Then every pair of non-adjacent edges between 4 vertices in the neighbourhood of $w$ are in different parts of the principal partition of $D$.
\end{conjecture}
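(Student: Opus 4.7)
The plan is to convert the conjecture into a statement about equilibrium stresses on $D$ and then attempt to exploit the structure at the degree-five vertex $w$ using the machinery developed earlier in the paper. By Proposition~\ref{thm:principalpartfork}(b) together with the fact that $r_3(D) = |E(D)| - 2$, two edges $e,f \in E(D)$ lie in different parts of the principal partition if and only if $r_3(D - e - f) = r_3(D) - 2$. Equivalently, using the stress formulation from Section~\ref{sec:coning}, this says that no non-zero equilibrium stress of a generic realisation $(D,p)$ vanishes simultaneously on both $e$ and $f$. Hence the first step is to reformulate: for every non-adjacent pair $uv, xy$ with $u,v,x,y \in N_D(w)$, show that every non-zero equilibrium stress of $(D,p)$ is non-zero on at least one of $uv, xy$.

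The second step is to focus on the equilibrium equation at $w$. Since $w$ has degree $5 = d+2$ and $p(w)$ is generic, equation \eqref{eq:1} imposes three independent linear constraints on the five scalars $\omega(wu), \omega(wv), \omega(wx), \omega(wy), \omega(wz)$, cutting out a two-dimensional space of admissible stress values at $w$. On the other hand, because $D$ is a double circuit, its full stress space is also two-dimensional. The strategy is to argue, using that $w$ is technicolour, that the restriction from the stress space of $(D,p)$ to stress values at $w$ is injective, and then to translate $\omega(uv) = 0$ and $\omega(xy) = 0$ into linear conditions on this two-dimensional space via the equilibrium equations at $u, v, x, y$. If these conditions can be shown to be independent, the two-dimensional space would be forced to zero, giving the required contradiction.

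Step three would handle the case where the injectivity or independence above fails by ruling out degenerate configurations. Supposing for contradiction that $uv$ and $xy$ lie in a common part $A$ of the principal partition, the circuit $C := D \setminus A$ avoids both edges but contains $w$ together with at least four of its five neighbours (since $\cR_3$-circuits have minimum degree four by Lemma~\ref{lem:maxk}). A careful case analysis on which four neighbours of $w$ lie in $V(C)$, combined with Proposition~\ref{prop:2technicolour} applied to any two-technicolour substructure one can extract and with Theorem~\ref{thm:coning-k-circuits-strong} used to analyse how the incident edges at $w$ are coloured, should constrain $D$ into a rigid configuration incompatible with $w$ having degree exactly $d+2$.

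The principal obstacle is the well-known fact that this conjecture is equivalent to the Tay--Whiteley $X$-replacement conjecture, which has resisted attack for decades. Concretely, the main tool at our disposal, Theorem~\ref{thm:coning-k-circuits-strong}, distinguishes parts of the principal partition of a cone under hypotheses (a) or (b) which involve the full neighbourhood of the technicolour vertex, while here $w$ has degree only $d+2$ rather than $|V(D)|-1$. Bridging the gap between a true cone vertex and a technicolour vertex of small degree seems to require genuinely new input beyond the coning-based and 2-sum-based techniques of this paper; the step I expect to fail without new ideas is Step 2, namely producing enough independent linear conditions at $w$ from the two vanishing-stress hypotheses, since nothing in the current framework forces the stresses propagated through $D - w$ to behave compatibly with the equilibrium at $w$.
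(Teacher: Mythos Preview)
This statement is a \emph{conjecture} in the paper, not a theorem: the paper explicitly presents Conjecture~\ref{con:doubleX} as a reformulation of the open Tay--Whiteley $X$-replacement conjecture (Conjecture~\ref{con:xrep}) and offers no proof. The only related result the paper establishes is the partial information in Lemma~\ref{lem:xrepcases}, which shows that the edges $wu_i$ incident to $w$ lie in pairwise distinct parts of the principal partition and gives two further constraints, but stops well short of the full conjecture.

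You correctly recognise this: your proposal openly identifies the equivalence with the $X$-replacement conjecture and flags Step~2 as the place where the argument is expected to fail without genuinely new ideas. That assessment is accurate. In particular, your Step~2 strategy of using injectivity of the restriction-to-$w$ map on the two-dimensional stress space, together with two further linear conditions coming from $\omega(uv)=0$ and $\omega(xy)=0$, does not go through as stated: the vanishing of $\omega$ on $uv$ and $xy$ imposes conditions at the vertices $u,v,x,y$, not directly at $w$, and there is no mechanism in the paper (nor an obvious one elsewhere) that propagates those constraints back to independent linear conditions on the stress values $\omega(wu_i)$ at $w$. So what you have written is a fair diagnosis of why the problem is hard rather than a proof, and there is no ``paper's own proof'' to compare it against.
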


The following result gives partial information.

\begin{lemma}\label{lem:xrepcases}
 Let $D=(V,E)$ be a double $\cR_d$-circuit and $w$ be a technicolour vertex of degree $d+2$ in $D$ with neighbour set $\{u_1,u_2,\dots,u_{d+2}\}$.
 For each $e\in E$, let $A_e$ be the part of the principal partition of $D$ which contains $e$. Then $A_{wu_i}\neq A_{wu_j}$ for all $1\leq i<j\leq d+2$. In addition:
 \begin{enumerate}
 \item[(a)] if $u_iu_j\in E$ for some $1\leq i< j\leq d+2$, then $A_{u_iu_j}\neq A_{wu_k}$ for all $k\neq i,j$;
\item[(b)] if $u_1u_2,u_3u_4\in E$ and $A_{u_1u_2}= A_{u_3u_4}$, then $A_{u_1u_2}\neq  A_{wu_k}$ for all $1\leq k\leq d+2$.
 \end{enumerate}
\end{lemma}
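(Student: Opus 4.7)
The plan is to handle the three assertions of the lemma in turn, converting each statement about the principal partition into a rank identity via Proposition~\ref{thm:principalpartfork}(b). The key fact to exploit is that, because $D$ is a double circuit, the circuits of $\cR_d|_D$ are precisely the sets $D\setminus A_\beta$ for the parts of the principal partition; in particular, any $\cR_d$-dependent subset of $D$ must contain some $D\setminus A_\beta$.

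The first assertion is by contradiction. If $A_{wu_i}=A_{wu_j}=:A_\alpha$, Proposition~\ref{thm:principalpartfork}(b) gives $r(D-wu_i-wu_j)=r(D)-1$, and the unique circuit of $\cR_d|_D$ contained in $D-wu_i-wu_j$ is $C:=D\setminus A_\alpha$. But $w$ has degree at most $d$ in $C$, while every $\cR_d$-circuit has minimum degree at least $d+1$ (as used in Lemma~\ref{lem:maxk}), so $w\notin V(C)$. Hence every edge $wu_m$ lies in $A_\alpha$, making $w$ monochromatic and contradicting the technicolour hypothesis.

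Assertion (a) is the heart of the argument. Suppose for contradiction that $A_{u_iu_j}=A_{wu_k}=:A_\alpha$ for some $k\ne i,j$, and set $C:=D\setminus A_\alpha$. The first assertion gives $wu_m\notin A_\alpha$ for all $m\ne k$, so $w$ has degree exactly $d+1$ in $C$ with neighbourhood $\{u_m:m\ne k\}$ containing $u_i$ and $u_j$. Since $u_iu_j\in A_\alpha$ we have $u_iu_j\notin C$, which makes $C$ precisely the 1-extension of $G:=(C-w)+u_iu_j$ at the new vertex $w$ (with removed edge $u_iu_j$ and the $d-1$ further neighbours $\{u_m:m\ne i,j,k\}$). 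Because $C$ is $\cR_d$-dependent, the contrapositive of Lemma~\ref{lem:01ext} forces $G$ to be $\cR_d$-dependent, so $G$ contains some circuit $D\setminus A_\beta$ of $\cR_d|_D$. A direct computation gives
\[
A_\beta \;\supseteq\; D\setminus G \;=\; (A_\alpha\setminus\{u_iu_j\})\cup\{wu_m:m\ne k\},
\]
which leads to a contradiction either way: if $A_\beta\ne A_\alpha$ then disjointness of the partition forces $A_\alpha\setminus\{u_iu_j\}=\emptyset$, so $A_\alpha=\{u_iu_j\}$, contradicting $wu_k\in A_\alpha$; if $A_\beta=A_\alpha$ then $\{wu_m:m\ne k\}\subseteq A_\alpha$, contradicting the first assertion.

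Finally, (b) follows at once from (a) by symmetry: for $k\notin\{1,2\}$, apply (a) to $u_1u_2$ to obtain $A_{u_1u_2}\ne A_{wu_k}$; for $k\in\{1,2\}$, apply (a) instead to $u_3u_4$ (noting $k\ne 3,4$) and combine with the hypothesis $A_{u_1u_2}=A_{u_3u_4}$. The main technical obstacle is the clean identification of $C$ as a 1-extension in assertion (a); once this is in place, Lemma~\ref{lem:01ext} together with the disjointness of the principal partition reduces everything to elementary set-theoretic bookkeeping.
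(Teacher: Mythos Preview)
Your proof is correct and rests on the same two ingredients as the paper's: the minimum-degree bound for $\cR_d$-circuits (for the first assertion) and the 1-extension lemma (for part (a)), with (b) following from (a) in both cases.

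The execution of part (a) differs slightly. The paper argues forward: since $w$ is technicolour, $D-w$ is $\cR_d$-independent; the graph $D-u_iu_j-wu_k$ is a 1-extension of $D-w$, hence independent by Lemma~\ref{lem:01ext}; but $D-u_iu_j-wu_k \supseteq D\setminus A_\alpha$, a circuit, giving an immediate contradiction. You instead run the contrapositive on the circuit $C=D\setminus A_\alpha$ itself, deduce that its inverse 1-extension $G=(C-w)+u_iu_j$ is dependent, and then locate a circuit $D\setminus A_\beta\subseteq G$ to finish with a two-case set-theoretic analysis. Both are valid; the paper's route is a little shorter because it never needs to identify which circuit sits inside the dependent set, while yours makes the role of the principal partition more explicit.
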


\arxiv{\begin{proof}
We first show that $A_{wu_i}\neq A_{wu_j}$ for all $1\leq i<j\leq d+2$. Suppose for a contradiction that $A_{wu_i}= A_{wu_j}=A_1$. Since $w$ is technicolour, $w$ has degree at least one and at most $d$ in $D-A_1$. As $\cR_d$-circuits have minimum degree at least $d+1$, this contradicts the fact that $D-A_1$ is an $\cR_d$-circuit. Hence  $A_{wu_i}\neq A_{wu_j}$ for all $1\leq i<j\leq d+2$.   Put $A_{wu_i}=A_i$ for all $1\leq i\leq d+2$. 

To prove (a) we assume for a contradiction that $u_1u_2\in E$ and  $A_{u_1u_2}= A_3$. Then $D-u_1u_2-wu_3$
contains an $\cR_d$-circuit. 
This contradicts the fact that $D-w$ is $\cR_d$-independent, as $w$ is a 
technicolour vertex of $D$, and $D-u_1u_2-wu_3$ can be obtained from $D-w$ by a 1-extension.

Part (b) of the lemma follows immediately from part (a).
\end{proof}

Note that Conjectures \ref{con:xrep} and \ref{con:doubleX} do not hold for double $\cR_4$-circuits. To see this, consider the complete bipartite graph $K_{6,6}$ with partition $A\cup B$ where $A=\{v_1,v_2,\dots,v_6\}$ and $B=\{u_1,u_2,\dots,u_6\}$.
Let $H$ be obtained from this copy of $K_{6,6}$ by adding the edges $v_2v_3,v_4v_5, u_3u_4, u_5u_6$ and deleting the edges $v_1u_1$ and $v_1u_2$. It is easy to find a sequence of 0- and 1-extensions applied to $K_5$ that results in $H$. Hence Lemma \ref{lem:01ext} implies that $H$ is minimally $\cR_4$-rigid. It follows that the graph $D$ obtained from $H$ by adding $v_1u_1$ and $v_1u_2$ is a double $\cR_4$-circuit, and that $v_1$ is technicolour. 
Since $D-\{v_2v_3, v_4v_5, u_3u_4, u_5u_6\}=K_{6,6}$ is an  $\cR_4$-circuit, $\{v_2v_3, v_4v_5, u_3u_4, u_5u_6\}$ is a set in the principal partition of $D$.
Hence $v_1$ is a technicolour vertex of degree $6$ in $D$ and the pair of non-adjacent edges $\{u_3u_4,u_5u_6\}$ in the neighbourhood of $v_1$ are in the same part of the principal partition.}

%\textcolor{red}{Ben, why did you put the following back? It does'nt seem to be what the referee requested}

%\arxiv{We conclude this section by deducing a special case of both conjectures. In particular, we use Corollary \ref{cor:addingedges} to prove a weaker statement than Conjecture \ref{con:xrep} that holds in arbitrary dimension.

%\begin{lemma}
%Let $G$ be $\mathcal{R}_{d}$-independent and let $H$ be formed from $G$ by deleting an edge $uv$ and then adding a new vertex $w$ of degree $d+3$ adjacent to $u$ and $v$.
%Then $H$ is $\cR_{d+1}$-independent.   
%\end{lemma}

%\begin{proof}
%Suppose that $G'$ is the spanning subgraph of $H$ obtained from $G$ by a $d$-dimensional 1-extension on the edge $uv$ adding $w$. Since $G$ is $\cR_d$-independent, $G'$ is $\cR_d$-independent by Lemma \ref{lem:01ext}. Since $H$ can be obtained from $G'$ by adding two edges incident to $w$, Corollary \ref{cor:addingedges} implies that $H$ is $\cR_{d+1}$-independent.   
%\end{proof}

%Note that the lemma immediately implies that both $(d+1)$-dimensional $X$- and $V$-replacement preserve $\cR_{d+1}$-independence when applied to $\cR_d$-independent graphs.

%\begin{corollary}\label{cor:xrep}
%Let $G$ be $\mathcal{R}_d$-independent and let $G'$ be obtained from $G$ by applying a $(d+1)$-dimensional $X$- or $V$-replacement. Then $G'$ is $\mathcal{R}_{d+1}$-independent.    
%\end{corollary}}

\subsection{Small $k$-fold $\cR_d$-circuits}

The $k$-fold $\cR_d$-circuits with the fewest vertices arise from complete graphs.
When $n>d+1$, $K_n$ is a $k$-fold $\cR_d$-circuit where $2k=n^2-(2d+1)n+d(d+1)$. 
If $k$ does not satisfy this equation, then the smallest $k$-fold $\cR_d$-circuits are obtained from the appropriate complete graph by deleting a small number of edges.
All such $k$-fold $\cR_d$-circuits are $\cR_d$-rigid.
In the following, we will determine the smallest flexible double $\cR_d$-circuit.
This extends the results of \cite{GGJN} which determined the smallest flexible $\cR_d$-circuits.

Recall the graph $B_{d,d-1}$ defined in Example \ref{ex:db}.
From \cite[Lemma 11]{GGJN} and its preceding discussion, it follows that $B_{d,d-1}$ is a flexible $\cR_d$-circuit with $r_d(B_{d,d-1})=d(d+5)-{d+1\choose 2}-1$.
Define $\overline B_{d,d-1}$ to be obtained from $B_{d,d-1}$ by adding back the edge $e$.
It follows that $\cl_d (B_{d,d-1})=\overline B_{d,d-1}$, and so $\overline B_{d,d-1}$ is a flexible double $\cR_d$-circuit.

\begin{lemma}\label{lem:small+flex+double+circuit}
For $k\geq 2$, suppose $G=(V,E)$ is a flexible $k$-fold $\cR_d$-circuit on at most $d+5$ vertices.
Then $k=2$ and $G=\overline B_{d,d-1}$.    
\end{lemma}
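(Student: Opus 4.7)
The plan is to prove the lemma in three steps, dealing with the vertex count, the existence of a flexible circuit in $G$, and the exact identification of $G$.

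First, I would show that $|V(G)|=d+5$. Since $G$ is cyclic, every edge of $G$ lies in some $\cR_d$-circuit, and combining Lemmas~\ref{lem:max} and~\ref{lem:maxk} every $\cR_d$-circuit has at least $d+2$ vertices. Assume for contradiction that $n:=|V(G)|\le d+4$. Then every $\cR_d$-circuit $C\subseteq G$ satisfies $|V(C)|\le d+4<d+5$, and by \cite{GGJN} every $\cR_d$-circuit on fewer than $d+5$ vertices is $\cR_d$-rigid. Any two such circuits share at least $2(d+2)-(d+4)=d$ vertices, so by Lemma~\ref{lem:intbridge}\eqref{it:intbridge:rig} their union is $\cR_d$-rigid; iterating this over all circuits, which together cover $E(G)$, forces $G$ to be $\cR_d$-rigid, contradicting flexibility. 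Hence $n=d+5$.

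Next, I would show that $G$ contains $B_{d,d-1}$ as a subgraph. Suppose for contradiction that every $\cR_d$-circuit of $G$ is $\cR_d$-rigid. If some such circuit $C$ has $|V(C)|\ge d+3$, then for every other circuit $C'$ we have $|V(C)\cap V(C')|\ge (d+3)+(d+2)-(d+5)=d$, and the same growing argument as in Step~1 yields that $G$ is rigid. So every $\cR_d$-circuit of $G$ has exactly $d+2$ vertices and is therefore $K_{d+2}$. If two distinct $K_{d+2}$-circuits shared at least $d$ vertices the growing argument would again force $G$ to be rigid, so every pair of distinct circuits shares exactly $d-1$ vertices. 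The union of any such pair is by definition (a copy of) $\overline{B}_{d,d-1}$. Since $G$ is a $k$-fold circuit with $k\ge 2$ it contains at least two distinct circuits, so $\overline{B}_{d,d-1}\subseteq G$ and in particular $B_{d,d-1}\subseteq G$; but $B_{d,d-1}$ is a flexible $\cR_d$-circuit, contradicting our assumption. Hence $G$ must contain a flexible $\cR_d$-circuit, which by the classification of the smallest flexible $\cR_d$-circuits in \cite{GGJN} must be $B_{d,d-1}$.

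Finally, I would show that $G=\overline{B}_{d,d-1}$ and $k=2$. Since $|V(B_{d,d-1})|=d+5=n$, we have $V(B_{d,d-1})=V(G)$. Assume for contradiction that $G$ contains an edge $f\notin E(\overline{B}_{d,d-1})$. Since $\cl(B_{d,d-1})=\overline{B}_{d,d-1}$, the edge $f$ lies outside $\cl(B_{d,d-1})$, so $r_d(B_{d,d-1}+f)=r_d(B_{d,d-1})+1=dn-\binom{d+1}{2}$, which attains the Maxwell bound. Hence $B_{d,d-1}+f$ is $\cR_d$-rigid, and so is $G\supseteq B_{d,d-1}+f$, contradicting flexibility. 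Therefore $B_{d,d-1}\subseteq G\subseteq\overline{B}_{d,d-1}$, so $G$ is either $B_{d,d-1}$ or $\overline{B}_{d,d-1}$. As $B_{d,d-1}$ is a $1$-fold circuit and $\overline{B}_{d,d-1}$ is a $2$-fold circuit (Example~\ref{ex:db}), the hypothesis $k\ge 2$ forces $G=\overline{B}_{d,d-1}$ and $k=2$.

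The main obstacle is the subcase of Step~2 in which every circuit of $G$ is assumed $\cR_d$-rigid and all circuits are $K_{d+2}$'s pairwise sharing exactly $d-1$ vertices. The key observation there is that any such pair of circuits already realises $\overline{B}_{d,d-1}$, which automatically contains the flexible circuit $B_{d,d-1}$ and so contradicts the standing assumption. The remainder of the argument is routine given the structural facts about $B_{d,d-1}$ recorded in Example~\ref{ex:db} and the rigid-intersection tool of Lemma~\ref{lem:intbridge}.
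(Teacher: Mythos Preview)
Your proof is correct, and it is genuinely more streamlined than the paper's.  The paper argues by splitting into the cases $k=2$ and $k>2$.  For $k=2$ its argument is close to your Steps~2 and~3 (using Lemma~\ref{lem:dcondouble} in place of your direct growing argument).  For $k>2$, however, the paper iteratively deletes classes of the principal partition to obtain a double circuit $H\subseteq G$, and then carries out a case analysis on $|V(H)|$: when $H$ spans it uses the $k=2$ case, and when $|V(H)|\le d+4$ it builds an $\cR_d$-rigid spanning subgraph of $G$ from $H$ via $0$-extensions, with a further subanalysis (invoking \cite[Lemma~6]{GGJN}) when $|V(H)|=d+2$.

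Your approach sidesteps all of this.  By working directly with the $\cR_d$-circuits of $G$ (rather than with a derived double circuit) and using the pigeonhole bound $|V(C)\cap V(C')|\ge |V(C)|+|V(C')|-|V|$ together with Lemma~\ref{lem:intbridge}\eqref{it:intbridge:rig}, your growing argument handles all $k\ge2$ at once.  The only external input you need beyond the paper's own lemmas and Example~\ref{ex:db} is the classification from \cite{GGJN} of flexible $\cR_d$-circuits on at most $d+5$ vertices, which the paper also uses.  The price is minimal: you must check (as you implicitly do) that once the running union of rigid circuits has at least $d+3$ vertices, any further circuit meets it in at least $d$ vertices, which follows from the same pigeonhole bound.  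Overall your argument is shorter and avoids the principal-partition and $0$-extension machinery entirely.
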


\arxiv{\begin{proof}
We first consider the case when $k=2$, i.e. $G$ is a double $\cR_d$-circuit.

 Suppose that every $\cR_d$-circuit in $G$ is $\cR_d$-rigid. Then, by Lemma \ref{lem:dcondouble} and the hypothesis that $G$ is flexible, $G$ is not $d$-connected. 
 Let $C_1,C_2$ be $\cR_d$-circuits in $G$. Then $G=C_1\cup C_2$.
 Since every $\cR_d$-circuit has at least $d+2$ vertices, we have $|V|\geq d+2+d+2-(d-1)=d+5$. Since $|V|\leq d+5$, equality must hold in both inequalities implying that $C_i\cong K_{d+2}$ for $i=1,2$. Thus $G=\overline B_{d,d-1}$. Then $G$ contains the flexible $\cR_d$-circuit $B_{d,d-1}$ contradicting the assumption that every $\cR_d$-circuit in $G$ is $\cR_d$-rigid.

Hence $G$ contains a flexible $\cR_d$-circuit $C$. By \cite[Theorem 1]{GGJN} and the hypothesis that $|V|\leq d+5$ we have that $C=B_{d,d-1}$. Since both $\cl_d (B_{d,d-1})=\overline B_{d,d-1}$ and $G$ are flexible double $\cR_d$-circuits on the same vertex set, we now have that $G=\overline B_{d,d-1}$.

We next consider the case when $k>2$. Let $A$ be a class in the principal partition of $G$. Proposition \ref{prop:principalpartfork} implies that $G-A$ is a $(k-1)$-fold $\cR_d$-circuit, hence has its own principal partition. We iterate this, deleting an arbitrary class in each successive principal partition until we obtain a double circuit $H$ in $\cR_d$.
 Suppose first that $H$ is a spanning subgraph of $G$.
 If $H=\overline B_{d,d-1}$, then as $\cl_d (\overline B_{d,d-1})=\overline B_{d,d-1}$, we have $r_d(G) >r_d(\overline B_{d,d-1})$. As $r_d(\overline B_{d,d-1})=d(d+5)-{d+1\choose 2}-1$, $G$ is $\cR_d$-rigid, contradicting the hypotheses. 

Hence we may suppose that $d+2\leq |V(H)|\leq  d+4$. The case when $k=2$ implies that $H$ is $\cR_d$-rigid.
Recall that $G$ has minimum degree $d+1$ by Lemma \ref{lem:maxk}.
If $|V(H)|\in \{d+3,d+4\}$ then we can obtain an $\cR_d$-rigid spanning subgraph $G' \subseteq G$ from $H$ via 0-extensions, contradicting that $G$ is $\cR_d$-flexible.
Thus $|V(H)|=d+2$ and $H\cong K_{d+2}$. 
If $|V|\leq d+4$, then the same argument shows $G$ is $\cR_d$-rigid. Hence $|V|=d+5$. 

Let $S$ denote the set of 3 vertices in $G$ that are not in $H$.
If any vertex in $S$ has at least $d$ neighbours in $H$, we can again obtain an $\cR_d$-rigid spanning subgraph of $G$ from $H$ via a series of 0-extensions.
Hence $G[S] \cong K_3$ and each vertex in $S$ has exactly $d-1$ neighbours in $H$.
If the set of all neighbours of $S$ in $H$ has size $d-1$, then since $k>2$, $G$ contains $\overline B_{d,d-1}$ as a proper subgraph and hence Lemma \ref{lem:intbridge}(\ref{it:intbridge:rank}) implies that $G$ is $\cR_d$-rigid.
If the set of all neighbours of $S$ in $H$ has size at least $d$, we may use \cite[Lemma 6]{GGJN} to deduce that $G$ is $\cR_d$-rigid. 
Either case contradicts the assumption that $G$ is $\cR_d$-flexible, completing the proof. 
\end{proof}}

% \noindent 4. Lemma 4.14 gives a lower bound on the minimal number of vertices a flexible $k$-fold circuit can have.
% This bound is tight when $k=2$ and is attained by $\overline{B}_{d,d-1}$, the union of two copies of $K_{d+2}$ that share a common copy of $K_{d-1}$.
% It would be interesting to find a tight bound for $k\geq 3$.\\
Lemma \ref{lem:small+flex+double+circuit} gives a lower bound on the minimal number of vertices a flexible $k$-fold circuit can have, and this bound is tight when $k=2$. %and is attained by $\overline{B}_{d,d-1}$, the union of two copies of $K_{d+2}$ that share a common copy of $K_{d-1}$.
It would be interesting to find a tight bound for $k\geq 3$.

\section{Concluding remarks}
\label{sec:conclusion}

\paragraph{Double circuit property in $\cR_3$} We have established that $\cR_d$ with $d\leq 2$ has the double circuit property and $\cR_d$ with $d\geq 4$ does not.
The problem of determining if $\cR_3$ has the double circuit property is still open, but it may be tractable to determine if this property holds for the closely related $C_2^1$-cofactor matroid. It is conjectured that this matroid is isomorphic to $\cR_3$ and recently the rank function of this matroid has been characterised \cite{CJT0}.

\paragraph{Generalised 2-sums} Given Lemma \ref{lem:k-sum}, it is natural to consider the generalisation of 2-sums to allow more vertices in the intersection.
\added{We say that a graph $G$ is the \emph{$t$-sum} of two graphs $G_1$ and $G_2$ along an edge $e \in E(G_1) \cap E(G_2)$ if $G = (G_1 \cup G_2) - e$ and $G_1 \cap G_2 \cong K_t$ is a complete graph on $t$ vertices.
Garamv{\"o}lgyi \cite[Theorem 5.7]{Gstress} proved that for $2 \leq t \leq d+1$, the $t$-sum of two $\cR_d$-circuits is again an $\cR_d$-circuit.
However, the converse is not true: see \cite[Figure 1]{GGJNc} for a counterexample.

In an earlier version of this paper, we asked whether Garamv{\"o}lgyi's statement could extend to $k$-fold circuits.
Garamv{\"o}lgyi has since provided us with a proof of this fact, which we detail below.}
% If $G$ is an $\cR_d$-circuit and $G=G_1\cup G_2$ where $G_1\cap G_2 \cong K_{t}-e$ for some $3\leq t\leq d+1$ then it need not be true that $G_1$ and $G_2$ are $\cR_d$-circuits (see \cite[Figure 1]{GGJNc} for a counterexample). However Garamv{\"o}lgyi \cite[Theorem 5.5]{Gstress} proved that the converse is true. It would be interesting to determine if his statement extends to $k$-fold $\cR_d$-circuits.

\added{
\begin{lemma}\label{lem:t-sum-kfold}
    Let $2 \leq t \leq d+1$ and $G = (V,E)$ be the $t$-sum of two graphs $G_1 = (V_1, E_1)$ and $G_2 = (V_2,E_2)$ along an edge $e \in E_1 \cap E_2$.
    If $G_1$ is a $k_1$-fold $\cR_d$-circuit and $G_2$ a $k_2$-fold $\cR_d$-circuit, then $G$ is a $k$-fold $\cR_d$-circuit where $k+1 = k_1 + k_2$.
\end{lemma}

\begin{proof}
    Let $G' = G_1 \cup G_2$, clearly this is $\cR_d$-cyclic.
    Applying \cite[Theorem 5.5]{Gstress}, we obtain that $G$ is also $\cR_d$-cyclic.
    To see that $G$ has the correct rank, observe that $G_1 \cap G_2 \cong K_t$ is $\cR_d$-independent and $\cR_d$-rigid.
    As $G_i$ is a $k_i$-fold $\cR_d$-circuit for each $i=1,2$, there exists $F_i \subseteq E_i \setminus (E_1 \cap E_2)$ such that $G_i - F_i$ is $\cR_d$-independent and $|F_i| = k_i$.
    We see from Lemma~\ref{lem:intbridge}\eqref{it:intbridge:indep} that $G' - F_1 - F_2$ is $\cR_d$-independent and hence
    \[
    r(G') \geq r(G' - F_1 - F_2) = |E(G' - F_1 - F_2)| = |E| + 1 - k_1 - k_2 \, .
    \]
    The converse inequality holds by submodularity:
    \[
    r(G') \leq r(G_1) + r(G_2) - r(G_1 \cap G_2) = |E_1| - k_1 + |E_2| - k_2 - |E_1 \cap E_2| = |E| + 1 - k_1 - k_2 \, .
    \]
    Finally, we note $r(G) = r(G')$ as $G'$ is $\cR_d$-cyclic.
\end{proof}
}

\paragraph{Relating $\cR_d$ to $\cR_{d+1}$} Given a graph $G$ on $n$ vertices with a vertex $v$ of degree at least $n-3$, \added{under some additional structural assumptions} Theorem \ref{thm:maint=2} characterises the \added{$\cR_{d+1}$}-rigidity of $G$ in terms of the \added{$\cR_{d}$}-rigidity properties of $G-v$.
It would be interesting to extend the theorem to express the \added{rank of $G$ in $\cR_{d+1}$ in terms of the rank of $G-v$ in $\cR_{d}$.}
%function of \added{$\cR_{d+1}$} in terms of the rank function of \added{$\cR_{d}$}
for such graphs.

\paragraph{Relaxing hypotheses of Theorem \ref{thm:coning-k-circuits-strong}} We saw that Theorem \ref{thm:coning-k-circuits-strong} does not hold without hypotheses (a) and (b) when $d \geq 4$.
However, we showed in Lemma~\ref{lem:coning-k-circuits-strong} that when $d=1$, we can remove hypotheses (a) and (b) from Theorem \ref{thm:coning-k-circuits-strong}.
It would be interesting to see if these can also be dropped for $d=2,3$.
Finally, the only counterexample we know of when $d \geq 4$ is flexible, hence one may be able to also remove the hypotheses for $\cR_d$-rigid graphs.

\section*{Acknowledgements}

\added{We thank two anonymous referees for their careful reading and helpful comments.
We also thank Daniel Garamv{\"o}lgyi for the proof of Lemma \ref{lem:t-sum-kfold}.}
A.\,N.\ and B.\,S.\ were partially supported by EPSRC grant EP/X036723/1.
The research visit that began this project was funded by the Heilbronn Institute for Mathematical Research small grant `Counting realisations of discrete structures'.

\end{document}